\theoremstyle{plain}
\newtheorem{defn}{Definition}[section]
\newtheorem{pr}{ Proposition}[section]
\newtheorem{lem}{Lemma}[section]
\newtheorem{theorem}{Theorem}[section]
\newtheorem{remark}{Remark}[section]
\newtheorem{cor}{Corollary}[section]
\newtheorem{eg}{Example}[section]
\date{} 
\begin{document}
\pagestyle{myheadings}
	\markboth{{\small\rm \hfill Generalized Proinov- type contractions using simulation functions with applications to fractals
			\hfill}\hspace{-\textwidth}
		\underline{${{}_{}}_{}$\hspace{\textwidth}}}
	{\underline{${{}_{}}_{}$\hspace{\textwidth}}\hspace{-\textwidth}%
		{\small\rm \hfill Generalized Proinov- type contractions using simulation functions with applications to fractals
			\hfill}}
	\thispagestyle{plain}
	
	\begin{center}
		
		{\huge Generalized Proinov- type contractions using simulation functions with applications to fractals 
			\rule{0mm}{6mm}\renewcommand{\thefootnote}{}
			\footnotetext{\scriptsize
				$^*$corresponding author.  e-mail: rameshkumard14@gmail.com\\}
		}
		
		\vspace{1cc}
		{\large\it Athul  Puthusseri $^{a}$, D. Ramesh Kumar$^{b,*}$}
		\vspace{1cc}
		\begin{center}
			$^{a,b}${\small \textit{Department of Mathematics, School of Advanced Sciences, Vellore Institute of Technology,\\
					Vellore-632014, TN, India}}\\
		\end{center}
		\vspace{1cc}
		\begin{abstract}
		The intention of this article is to introduce a generalization of Proinov-type contraction via simulation functions. We name this generalized contraction map as Proinov-type $\mathcal{Z}$-contraction. This article establishes the\textcolor{white}{a}existence and\textcolor{white}{a}uniqueness of\textcolor{white}{a}fixed points for these contraction mappings in quasi-metric space and also, include explanatory examples with graphical interpretation. As an application, we generate a new iterated function system (IFS) consisting of Proinov-type $\mathcal{Z}$-contractions in quasi-metric spaces. At the end of the paper, we prove the existence of a unique attractor for the IFS consisting of Proinov-type $\mathcal{Z}$-contractions.
		\end{abstract}
		
		%

	\end{center}
	Mathematics Subject Classification(2020): Primary 47H09, 47H10; Secondary 28A80\\
	{\it Keywords: Quasi-metric space, Fixed point, Proinov-type $\mathcal{Z}$-contraction, Simulation functions, Iterated function system.}
	\hrule
	
	\vspace{1cc}

\section{Introduction}
The Banach contraction principle is the most famous and widely used fixed point theorem. It was stated and proved by the renowned Polish mathematician Stefan Banach in 1922. Its applications went beyond the boundary of mathematics, to other branches of science, engineering, technology, economics and so on. Many exciting results in fixed point theory came out as extensions of the Banach contraction principle. Recently, in 2020, P. D. Proinov \cite{21} has proved a\textcolor{white}{a}fixed-point result for a\textcolor{white}{a}map $T$ defined on a complete metric space $(X,d)$ to itself, satisfying the\textcolor{white}{a}contraction-type condition. 
\begin{equation}\label{eq:1.1}
    \zeta\left(d\left(Tx, Ty\right)\right)\leq \eta\left(d\left(x, y\right)\right),\ \ \ \text{for\textcolor{white}{a}all } x,y\in X  \text{\textcolor{white}{a}with } d\left(Tx, Ty\right)> 0,
\end{equation}  where $\zeta, \eta :(0,\infty)\rightarrow \mathbb{R}$ are\textcolor{white}{a}two functions which are satisfying the condition $\eta(t)<\zeta(t)$ for $t>0$.\\
The main fixed point result given by P. D. Proinov is:
\begin{theorem}\label{thm:1.1} \cite{21}
Let $(X,d)$ be a\textcolor{white}{a}complete metric space and $T:X\rightarrow X$ be a\textcolor{white}{a}mapping satisfying\textcolor{white}{a}condition (\ref{eq:1.1}), where the functions $\zeta, \eta :(0, \infty)\rightarrow\mathbb{R}$ satisfying the following conditions:
\begin{enumerate}[label=(\roman*)]
\item $\zeta$ is\textcolor{white}{a}nondecreasing;\item $\eta(t)<\zeta(t)$ for any $t>0$;\item  $\limsup\limits_{t\rightarrow \epsilon+}\eta(t)< \zeta(\epsilon+).$
\end{enumerate} Then $T$ has a\textcolor{white}{a}unique fixed point $x^*\in X$ and the\textcolor{white}{t}iterative sequence $\{T^nx\}$ converges to $x^*$ for every $x\in X$.
\end{theorem}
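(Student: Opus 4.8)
The plan is to run a Picard iteration and split the work into the three classical stages: the consecutive displacements shrink to zero, the iterates form a Cauchy sequence, and the limit is the unique fixed point. The engine throughout is the elementary observation that, since $\zeta$ is nondecreasing, the hypothesis $\zeta(d(Tx,Ty)) \le \eta(d(x,y)) < \zeta(d(x,y))$, valid whenever $d(Tx,Ty)>0$, forces $d(Tx,Ty) < d(x,y)$; that is, a strict inequality between the $\zeta$-values transfers back to a strict inequality between the arguments.

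First I would fix $x_0 \in X$, set $x_{n+1}=Tx_n$ and $d_n = d(x_n,x_{n+1})$. If some $d_{n_0}=0$ then $x_{n_0}$ is already fixed, so I may assume $d_n>0$ for all $n$. Applying the monotonicity remark to the pair $(x_{n-1},x_n)$ gives $d_n < d_{n-1}$, so $(d_n)$ strictly decreases to some $\epsilon = \inf_n d_n \ge 0$. To show $\epsilon = 0$ I argue by contradiction: if $\epsilon>0$ then $d_n \to \epsilon+$, and from $\zeta(d_n) \le \eta(d_{n-1})$ together with $d_n>\epsilon \Rightarrow \zeta(d_n)\ge\zeta(\epsilon+)$ I obtain $\zeta(\epsilon+) \le \eta(d_{n-1})$ for every $n$; passing to $\limsup$ and invoking condition (iii) yields $\zeta(\epsilon+) \le \limsup_{t\to\epsilon+}\eta(t) < \zeta(\epsilon+)$, a contradiction. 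Hence $d_n \to 0$.

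The Cauchy step is where I expect the real difficulty, and it is the heart of the argument. The standard device is to suppose $(x_n)$ is not Cauchy, fix $\varepsilon_0>0$, and for arbitrarily large $n_k$ choose the \emph{smallest} $m_k>n_k$ with $d(x_{n_k},x_{m_k})>\varepsilon_0$; minimality together with $d_n\to 0$ then forces $r_k := d(x_{n_k},x_{m_k}) \to \varepsilon_0+$. The obstacle is that one wants to feed a distance into condition (iii), but the quantity one controls after applying $T$ approaches $\varepsilon_0$ from an \emph{uncontrolled side}, and since $\zeta$ is merely nondecreasing it may jump at $\varepsilon_0$, so a crude triangle estimate only gives $\zeta(\varepsilon_0-)<\zeta(\varepsilon_0+)$, which is no contradiction. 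My way around this is to apply the contraction to the \emph{predecessor} pair $(x_{n_k-1},x_{m_k-1})$, giving $\zeta(r_k) \le \eta(r_k')$ with $r_k' := d(x_{n_k-1},x_{m_k-1}) \to \varepsilon_0$. Since $r_k>\varepsilon_0$ we have $\zeta(r_k)\ge\zeta(\varepsilon_0+)$, hence $\zeta(\varepsilon_0+)\le\eta(r_k')$ for all $k$. If $r_k' \le \varepsilon_0$ for some $k$, then (ii) and monotonicity give $\zeta(\varepsilon_0+)\le\eta(r_k')<\zeta(r_k')\le\zeta(\varepsilon_0)\le\zeta(\varepsilon_0+)$, an immediate contradiction; so $r_k'>\varepsilon_0$ eventually, i.e. $r_k'\to\varepsilon_0+$, and then $\limsup$ with (iii) gives $\zeta(\varepsilon_0+)\le\limsup_{t\to\varepsilon_0+}\eta(t)<\zeta(\varepsilon_0+)$. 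Either way the assumption collapses, so $(x_n)$ is Cauchy.

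Finally, by completeness $x_n \to x^*$ for some $x^*$. To identify $x^*$ as a fixed point I use the monotonicity remark once more: for each $n$ one has $d(Tx_n,Tx^*)\le d(x_n,x^*)$ (strict when the left side is positive, trivial otherwise), so $d(x_{n+1},Tx^*)=d(Tx_n,Tx^*)\le d(x_n,x^*)\to 0$; thus $x_{n+1}\to Tx^*$, and uniqueness of limits forces $Tx^*=x^*$. Uniqueness of the fixed point is then immediate: if $x^*,y^*$ were distinct fixed points, $\zeta(d(x^*,y^*))=\zeta(d(Tx^*,Ty^*))\le\eta(d(x^*,y^*))<\zeta(d(x^*,y^*))$, a contradiction. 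Since the Picard sequence from any starting point converges to a fixed point and that fixed point is unique, $\{T^n x\}\to x^*$ for every $x\in X$.
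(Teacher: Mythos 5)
Your proof is correct, and it is worth noting that the paper itself offers no proof of this statement --- it is quoted from Proinov \cite{21} --- so the natural comparison is with Proinov's original argument, one piece of which the paper does reproduce later as Lemma \ref{lem:4.2}: if $d(x_n,x_{n+1})\to 0$ and $\{x_n\}$ is not Cauchy, there are subsequences with $d(x_{n_k},x_{m_k})\to\epsilon$ (from above, by the minimal-index choice) and $d(x_{n_k+1},x_{m_k+1})\to\epsilon$, after which the contraction is applied \emph{forward} to the pair $(x_{n_k},x_{m_k})$. You instead keep the minimal-index construction inline and shift \emph{backward} to the predecessor pair $(x_{n_k-1},x_{m_k-1})$, and your case split on whether $r_k'\le\varepsilon_0$ or $r_k'>\varepsilon_0$ is precisely the right device for the genuine difficulty you correctly identified: condition (iii) only controls $\eta$ from the right of $\varepsilon_0$, and $\zeta$, being merely nondecreasing, may jump there. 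All the delicate inequalities check out: $\zeta(\epsilon+)=\inf_{t>\epsilon}\zeta(t)\le\zeta(d_n)$ whenever $d_n>\epsilon$; in the case $r_k'\le\varepsilon_0$ one has $r_k'>0$ (since $r_k'=0$ would force $r_k=0<\varepsilon_0$), so (ii) applies and yields $\zeta(\varepsilon_0+)\le\eta(r_k')<\zeta(r_k')\le\zeta(\varepsilon_0)\le\zeta(\varepsilon_0+)$; and in the remaining case $r_k'\to\varepsilon_0^+$ feeds correctly into $\limsup_{t\to\varepsilon_0+}\eta(t)<\zeta(\varepsilon_0+)$. The fixed-point and uniqueness steps via the monotonicity transfer $\zeta(d(Tx,Ty))<\zeta(d(x,y))\Rightarrow d(Tx,Ty)<d(x,y)$ are standard and sound (you rightly note $d(Tx,Ty)>0$ forces $d(x,y)>0$, so (ii) is applicable). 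What your route buys is self-containedness --- no external subsequence lemma is needed; what Proinov's packaging buys is reusability, which is exactly how the present paper exploits it, since its own quasi-metric theorems (e.g.\ Theorem \ref{thm:3.1} and Theorem \ref{thm:4.2}) replace the one-sided condition (iii) by a sequential continuity condition on $\zeta$ and then invoke the quoted lemma without any side-of-approach bookkeeping. Two cosmetic points you may wish to make explicit: for large $k$ one has $m_k>n_k+1$ (otherwise $r_k=d_{n_k}\to 0$), which is needed to read off $d(x_{n_k},x_{m_k-1})\le\varepsilon_0$ from minimality; and the existence of $\zeta(\epsilon+)$ should be flagged as a consequence of monotonicity of $\zeta$.
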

He has shown that this result extends some of the famous fixed point results in the literature, which include Amini- Harandi and Petrusel\cite{1}, Moradi\cite{22}, Geraghty\cite{11}, Jleli and Samet\cite{14}, Wardowski and Van Dung\cite{5}, Secelean\cite{17}, etc. \\In 2015, Khojasteh et al.\cite{8} introduced a new method for the study of fixed points using simulation functions. They have come up with a new kind of contraction map called $\mathcal{Z}$-contractions.
\begin{defn}\label{defn:1.2}\cite{8}
    A\textcolor{white}{a}simulation function is a\textcolor{white}{a}mapping $\xi:[0,\infty)\times[0,\infty)\rightarrow\mathbb{R}$ which satisfies the\textcolor{white}{a}following conditions:
    \begin{enumerate}[start=1,label={(\bfseries $z_\arabic*$)}]
    \item $\xi(0,0)= 0$;
    \item $\xi(s, t)< t-s$\textcolor{white}{a}for\textcolor{white}{a}all $s, t>0$;
    \item for any two sequences $\{s_n\}, \{t_n\}$ in $(0, \infty)$ with the property $\lim\limits_{n\rightarrow\infty}s_n=\lim\limits_{n\rightarrow\infty}t_n>0$, it is true that $\limsup\limits_{n\rightarrow\infty}\xi\left(s_n,t_n\right)<0$.
    \end{enumerate}
\end{defn}
We use the notation $\mathcal{Z}$ to represent the set of all simulation functions. Here are a few illustrations of simulation functions.
\begin{eg}\cite{8}
    Let $\xi_i:[0,\infty)\times[0,\infty)\rightarrow\mathbb{R}$\textcolor{white}{a}for $i=1,2,3$ be\textcolor{white}{a}defined by
    \begin{enumerate}
        \item $\xi_1(s,t)= p(t)-q(s)$ for all $s,t\in [0,\infty)$, where $p,q:[0, \infty)\rightarrow[0,\infty)$\textcolor{white}{a}are continuous functions\textcolor{white}{a}such that $p(t)=q(t)=0$ if and\textcolor{white}{a}only if $t=0$ and $p(t)<t\leq q(t)$ for\textcolor{white}{a}all $t>0$.
        \item $\xi_2(s, t)= t-\frac{f(s,t)}{g(s,t)}s$ for all $s,t\in [0,\infty)$, where $f,g:[0, \infty)\times[0,\infty)\rightarrow[0,\infty)$ are\textcolor{white}{a}continuous\textcolor{white}{a}functions with respect to each\textcolor{white}{a}variable such that $f(s, t)>g(s,t)$ for all $s, t>0$.
        \item $\xi_3(s,t)=t-h(t)-s$ for all $s,t\in[0,\infty)$ where $h:[0,\infty)\rightarrow[0,\infty)$ is a\textcolor{white}{a}continuous function\textcolor{white}{a}satisfying $h(t)=0$ if and only if $t=0$.
    \end{enumerate} Then $\xi_i\in\mathcal{Z}$ for $i=1,2,3$.
\end{eg}
We will define the $\mathcal{Z}$-contraction as follows:
\begin{defn}\label{defn:1.3}\cite{8}
    Let $(X, d)$ be a\textcolor{white}{a}metric space, and $T:X\rightarrow X$. Then $T$ is said to be a $\mathcal{Z}$-contraction with respect to some $\xi\in\mathcal{Z}$ if $\xi\left(d\left(Tx, Ty\right), d(x, y)\right)\geq 0$ for all $x, y\in X$.
\end{defn}
The following Theorem proves that there is a unique fixed point for\textcolor{white}{a}$\mathcal{Z}$-contraction.
\begin{theorem}\cite{8}
    Let $T:X\rightarrow X$ be\textcolor{white}{a}a $\mathcal{Z}$-contraction\textcolor{white}{a}with respect to $\xi\in \mathcal{Z}$, where $(X,d)$ is a complete metric space. Then there exists a\textcolor{white}{a}unique\textcolor{white}{f}fixed point, say $x^*\in X$, of $T$. Furthermore, the iterated sequence $\{T^nx\}$ converges to $x^*$ for every $x\in X$.
\end{theorem}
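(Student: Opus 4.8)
The plan is to run the classical Picard-iteration scheme, deploying the three defining axioms of a simulation function at three separate moments. The single most useful consequence to record first is the following contractivity: whenever $x\neq y$, so that $d(x,y)>0$ and $d(Tx,Ty)\geq 0$, combining the hypothesis $\xi(d(Tx,Ty),d(x,y))\geq 0$ with axiom $(z_2)$, namely $\xi(s,t)<t-s$, forces $d(Tx,Ty)<d(x,y)$. Thus $T$ strictly shrinks positive distances, and this observation drives everything. Uniqueness then comes for free and I would dispatch it immediately: if $x^*$ and $y^*$ were distinct fixed points, then $d(Tx^*,Ty^*)=d(x^*,y^*)>0$, and substituting $s=t=d(x^*,y^*)$ into $(z_2)$ yields $0\leq\xi(d(x^*,y^*),d(x^*,y^*))<0$, a contradiction.

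For existence, fix any $x_0\in X$ and set $x_{n+1}=Tx_n=T^{n+1}x_0$. If two consecutive iterates coincide we already have a fixed point, so I may assume $d_n:=d(x_n,x_{n+1})>0$ for every $n$. By the contractivity just noted, $d_{n+1}<d_n$, so $\{d_n\}$ is strictly decreasing and converges to some $r\geq 0$. To see that $r=0$, suppose instead $r>0$ and apply axiom $(z_3)$ to the sequences $s_n=d_{n+1}$ and $t_n=d_n$, both tending to $r>0$: it yields $\limsup_{n}\xi(s_n,t_n)<0$, which contradicts $\xi(d(Tx_n,Tx_{n+1}),d(x_n,x_{n+1}))\geq 0$. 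Hence $d_n\to 0$.

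The heart of the proof, and the step I expect to be most delicate, is showing that $\{x_n\}$ is Cauchy, since the mere shrinking of successive distances does not by itself guarantee this. I would argue by contradiction: if $\{x_n\}$ is not Cauchy, there exist $\varepsilon>0$ and indices $m_k>n_k\geq k$ with $d(x_{m_k},x_{n_k})\geq\varepsilon$, where $m_k$ is chosen minimal, so that $d(x_{m_k-1},x_{n_k})<\varepsilon$. The triangle inequality together with $d_n\to 0$ then squeezes both $d(x_{m_k},x_{n_k})\to\varepsilon$ and $d(x_{m_k-1},x_{n_k-1})\to\varepsilon$. Setting $s_k=d(Tx_{m_k-1},Tx_{n_k-1})=d(x_{m_k},x_{n_k})$ and $t_k=d(x_{m_k-1},x_{n_k-1})$, both converge to $\varepsilon>0$, so $(z_3)$ gives $\limsup_{k}\xi(s_k,t_k)<0$, while the contraction hypothesis gives $\xi(s_k,t_k)\geq 0$, a contradiction. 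Therefore $\{x_n\}$ is Cauchy, and by completeness $x_n\to x^*$ for some $x^*\in X$.

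Finally I would verify that $x^*$ is fixed. Using contractivity (in the weak form $d(Tx_n,Tx^*)\leq d(x_n,x^*)$, which holds trivially when $x_n=x^*$ as well) together with the triangle inequality, $d(x^*,Tx^*)\leq d(x^*,x_{n+1})+d(Tx_n,Tx^*)\leq d(x^*,x_{n+1})+d(x_n,x^*)$, and letting $n\to\infty$ both terms vanish, so $d(x^*,Tx^*)=0$, i.e. $Tx^*=x^*$. The convergence $\{T^nx\}\to x^*$ for an arbitrary starting point $x$ follows by rerunning the same Picard argument from $x$ and invoking the already-established uniqueness of the fixed point.
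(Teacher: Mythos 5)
Your proof is correct; the only hair worth splitting is that axiom $(z_2)$ applies only when $d(Tx,Ty)>0$, the case $d(Tx,Ty)=0$ making the strict-shrinking claim trivially true, which your phrasing implicitly covers. The paper states this theorem without proof (it is quoted from Khojasteh et al.\ \cite{8}), and your argument --- strict contractivity and uniqueness from $(z_2)$, asymptotic regularity and the non-Cauchy contradiction from $(z_3)$ applied to the $\varepsilon$-subsequences produced by the minimal-index construction (exactly the device of Lemma \ref{lem:4.2}) --- is essentially the same template the paper itself deploys for its generalizations in Lemma \ref{lem:3.1} and Theorems \ref{thm:3.1} and \ref{thm:4.2}.
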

The quasi-metric is a generalized metric that does not possess the symmetry condition of a metric. This notion was introduced in the literature by W. A. Wilson\cite{23}.
\begin{defn}\label{defn:1.4}\cite{23}
Let $X$ be a nonempty set. Define a function $q:X\times X\rightarrow\mathbb{R}$. Then $q$ is a quasi-metric on $X$ if it satisfies the following conditions:
\begin{enumerate}
    \item $q(x,y)\geq 0$ for every $x,y\in X$.
    \item $q(x, y)= 0$ if and only if $x=y$ for every $x,y\in X$.
    \item $q(x,y)\leq q(x,z)+q(z,y)$ for any $x,y,z\in X$.
\end{enumerate} The set $X$ along with $q$ is called a quasi-metric space and is denoted as $(X,q)$.
\end{defn}
Since there is no symmetry, $q(x,y)$ need not be equal to $q(y,x)$ for any $x,y\in X$. Thus, in quasi-metric spaces, we have two topologies, called forward topology and backward topology. So, concepts such as convergence of sequences, continuity of functions, compactness and completeness got two notions namely forward and backward.

By adding a weaker symmetry condition called $\delta$-symmetry we can get a sub-class of quasi-metric spaces namely, $\delta$-symmetric quasi-metric spaces, which have nicer properties than\textcolor{white}{a}quasi-metric spaces. 
\begin{defn}\label{defn:1.5}
    A\textcolor{white}{a}quasi-metric space $(X,q)$ is said to be a $\delta$ symmetric\textcolor{white}{a}quasi-metric space if there exists\textcolor{white}{a}$\delta>0$ such\textcolor{white}{a}that $q(x,y)\leq \delta q(y,x)$ for all $x,y\in X$.
\end{defn}
In a $\delta$-symmetric quasi-metric space, one can easily observe that forward convergence implies backward convergence and vice versa. 

In this article, we are introducing new types of contraction mappings called $f$-Proinov-type $\mathcal{Z}$-contractions and $b$-Proinov-type $\mathcal{Z}$-contractions in the $\delta$-symmetric quasi-metric space by using simulation functions. We prove the existence\textcolor{white}{a}and uniqueness\textcolor{white}{a}of fixed point for these newly introduced contraction mappings. These fixed point theorems extend to fractal spaces obtained from $\delta$-symmetric quasi-metric space in the last section. We construct an iterated function system consisting of $f$-Proinov-type $\mathcal{Z}$-contractions towards the end of the paper. Further, we prove the existence\textcolor{white}{a}of\textcolor{white}{a}a unique attractor for this iterated function system.

\section{Preliminaries}
This section includes some basic definitions and results in quasi-metric spaces which are required for the further sections of this paper.

Suppose $(X, q)$ is a quasi-metric space. Then it does not need to always be the case that $q(x, y)= q(y, x)$ for $x,y\in X$. So, open balls $B_f(x, r)= \{y\in X: q(x,y)<r\}$ and $B_b(x,r)=\{y\in X: q(y,x)<r\}$, for some $x\in X$ and $r>0$, can be two different sets and are called forward and backward open balls, centered at $x$ with radius $r$, respectively. These two different basic open balls will lead to the following two different topologies in $X$. 
\begin{defn}\label{def:2.1}\cite{23}
    The topology $\tau_f$, whose basis is the collection of all forward open balls $B_f(x, r)= \{y\in X: q(x, y)< r\}$ for $x\in X$ and $r> 0$, on $X$ is called the forward topology.\\ Analogously, the topology $\tau_b$, which has a basis consists of all backward open balls $B_b(x, r)= \{y\in X: q(y, x)< r\}$ for $x\in X$ and $r> 0$, is called the backward topology on $X$.
\end{defn} The following are some examples of\textcolor{white}{a}quasi-metric spaces:
\begin{eg}
  Let $X=\mathbb{R}$ and 
 $q:\mathbb{R}\times\mathbb{R}\rightarrow\mathbb{R}$ be\textcolor{white}{a}defined by 
  \begin{equation*}
      q(\alpha,\beta)=
      \begin{cases}
          \beta-\alpha & \text{if } \beta\geq \alpha\\
          1 & \text{if } \beta<\alpha.
      \end{cases}
  \end{equation*}This $q$ is a quasi-metric on $X$, which is known as\textcolor{white}{a}Sorgenfrey quasi-metric. Here $\tau_f$ is the lower-limit topology and $\tau_b$ is the upper-limit topology on $\mathbb{R}$.
\end{eg}
\begin{eg}
    For any $\lambda>0$, define $q:\mathbb{R}\times\mathbb{R}\rightarrow\mathbb{R}$ by
    \begin{equation*}
        q(\alpha,\beta)=
        \begin{cases}
            \alpha-\beta &\text{if }\alpha\geq \beta\\
            \lambda(\beta-\alpha) &\text{if }\alpha<\beta.
        \end{cases}
    \end{equation*} Here $q$ is a $\lambda$-symmetric quasi-metric space on $\mathbb{R}$. Both the forward and backward topologies here are the usual topology on $\mathbb{R}$.
\end{eg}
These two topologies give rise to two different notions of convergence in the space $X$, namely forward convergence (or $f$-convergence) and backward convergence (or $b$-convergence). Here, $f$-convergence is the convergence in the topology $\tau_f$ and $b$-convergence is the convergence in $\tau_b$. It can be defined in another way as follows: 
\begin{defn}\label{def:2.2}
    Let $\{a_n\}$ be a\textcolor{white}{a}sequence in the quasi-metric\textcolor{white}{a}space $(X,q)$. Then,
    \begin{enumerate}
       \item $\{a_n\}$ is said to be $f$ -converge to $a\in X$ if $q(a, a_n)\rightarrow 0$ as $n\rightarrow\infty$. Then we will write $a_n\xrightarrow[]{f}a$.
        \item $\{a_n\}$ is said to be $b$-converges to $a\in X$ if $q(a_n, a)\rightarrow 0$ as $n\rightarrow\infty$. Then we will write $a_n\xrightarrow[]{b}a$.
    \end{enumerate}
\end{defn}
We have different notions of continuity in quasi-metric spaces since continuity always depends on the underlying topology.
\begin{defn}\label{def;2.3}\cite{16}
    Let $(X,q)$ and $(Y, \rho)$ be two quasi-metric\textcolor{white}{a}spaces. Then a function $g:X\rightarrow Y$ is $ff$-continuous at $x\in X$ if for any\textcolor{white}{a}sequence $x_n\xrightarrow[]{f}x$ in $(X,q)$, one has $g(x_n)\xrightarrow[]{f}g(x)$ in $(Y,\rho)$. Furthermore, $g$ is $ff$-continuous in $X$ if it is $ff$-continuous at each\textcolor{white}{a}point $x\in X$. If $Y=\mathbb{R}$ with the usual topology, then $g$ is said to be $f$-continuous. Analogously, we have other notions of continuities namely, $fb$-continuous, $bf$-continuous, $bb$-continuous and $b$-continuous.
\end{defn}
The next proposition is discussing the continuity of a quasi-metric space.
\begin{pr}\label{prop:2.1}\cite{16}
    If $f$-convergence implies $b$-convergence in a quasi-metric space $(X,q)$, then $q$ is $f$-continuous.
\end{pr}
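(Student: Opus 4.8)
The plan is to prove $f$-continuity of $q$ sequentially. Reading $q$ as a map on $X\times X$ endowed with the product of the forward topologies, the task is: whenever $x_n\xrightarrow[]{f}x$ and $y_n\xrightarrow[]{f}y$, show that the real sequence $q(x_n,y_n)$ converges to $q(x,y)$ in the usual topology of $\mathbb{R}$. Since $f$-continuity into $\mathbb{R}$ is a sequential notion by Definition \ref{def;2.3}, it suffices to bound $|q(x_n,y_n)-q(x,y)|$ and drive it to zero.

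First I would unpack the hypotheses on the sequences. By the definition of $f$-convergence, $x_n\xrightarrow[]{f}x$ gives $q(x,x_n)\to 0$ and $y_n\xrightarrow[]{f}y$ gives $q(y,y_n)\to 0$. The key step is to invoke the standing assumption that $f$-convergence implies $b$-convergence: this upgrades each forward convergence to the corresponding backward one, so that also $q(x_n,x)\to 0$ and $q(y_n,y)\to 0$. Thus all four ``distances'' from the indexed points to their limits vanish, which is exactly what the lack of symmetry of $q$ would otherwise deny us.

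Next I would squeeze $q(x_n,y_n)$ between two applications of the triangle inequality. For the upper estimate,
\[
q(x_n,y_n)\le q(x_n,x)+q(x,y)+q(y,y_n),
\]
so $q(x_n,y_n)-q(x,y)\le q(x_n,x)+q(y,y_n)$. For the lower estimate,
\[
q(x,y)\le q(x,x_n)+q(x_n,y_n)+q(y_n,y),
\]
so $q(x,y)-q(x_n,y_n)\le q(x,x_n)+q(y_n,y)$. Combining the two, $|q(x_n,y_n)-q(x,y)|$ is dominated by a sum of the four vanishing terms, hence tends to $0$, and the $f$-continuity of $q$ follows.

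The main obstacle---indeed the only substantive point---is the handling of the asymmetry. In a genuine metric the two triangle bounds would involve the same symmetric distances, but here the upper estimate calls for the backward quantities $q(x_n,x),\,q(y_n,y)$ while the lower estimate calls for the forward quantities $q(x,x_n),\,q(y,y_n)$. Without the assumption that $f$-convergence forces $b$-convergence we would control only one direction and the squeeze would collapse; so the hypothesis is used in an essential way rather than for mere convenience.
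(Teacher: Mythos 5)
Your proof is correct. Note, however, that the paper states this proposition without any proof of its own, importing it from \cite{16}; so there is no in-paper argument to compare against, and your two-sided triangle-inequality squeeze is precisely the standard argument for this fact. You also use the hypothesis in exactly the right place: the forward terms $q(x,x_n)$, $q(y,y_n)$ vanish by $f$-convergence alone, while the backward terms $q(x_n,x)$, $q(y_n,y)$ vanish only because $f$-convergence is assumed to imply $b$-convergence, and without them neither side of the squeeze closes. One small slip in your closing commentary: each of your two estimates actually mixes one forward and one backward quantity (the upper bound uses the backward term $q(x_n,x)$ together with the forward term $q(y,y_n)$; the lower bound uses the forward term $q(x,x_n)$ together with the backward term $q(y_n,y)$), rather than the upper being purely backward and the lower purely forward --- this misstatement is harmless, since all four terms tend to zero under your hypotheses and the displayed inequalities themselves are correct.
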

\begin{remark}\label{rmk:2.1}
Let $\{x_n\}$ is a\textcolor{white}{a}sequence in $(X, q)$, a $\delta$-symmetric\textcolor{white}{a}quasi-metric space. Then $\{x_n\}$ is $f$-convergent if and\textcolor{white}{a}only if it is $b$-convergent in $X$. Therefore, the map $(x,y)\mapsto q(x,y)$ is $f$-continuous.
 \end{remark}
    \begin{proof}
        Suppose that $\{x_n\}$ $f$-converges to $x\in X$. Then we have $\lim\limits_{n\rightarrow\infty}q(x, x_n)= 0$. Since $q$ is $\delta$-symmetric, we have $q(x_n, x)\leq \delta q(x, x_n)$ for all $n\in\mathbb{N}$. Thus, we get $\lim\limits_{n\rightarrow\infty}q(x_n, x)= \delta \lim\limits_{n\rightarrow\infty}q(x, x_n)= 0$, which implies $\{x_n\}$ $b$-converges to $x$. The converse follows in the same way.\\
        The second part follows directly from Proposition\ref{prop:2.1}.
    \end{proof}

Analogous to compactness in metric spaces we have forward and backward compactness in quasi-metric spaces.
\begin{defn}\label{def:2.4}\cite{16}
    A compact subset in the topological space $(X, \tau_f)$ is called a forward compact subset or simply $f$-compact subset of $X$. Similarly, a compact subset in the topological space $(X, \tau_b)$ is called a backward compact or $b$-compact subset of $X$. 
\end{defn}

\section{Main Results}
The results on the existence and uniqueness of fixed points of Proinov-type $\mathcal{Z}$-contractions on quasi-metric spaces are presented in this section. 
\subsection{Auxiliary results}
Here we state some definitions and prove some results that will be used for proving our main theorem.
\begin{defn}\label{defn:3.1}
    Let\textcolor{white}{a}$(X, q)$ be a quasi-metric space. A\textcolor{white}{a}mapping $T: X\rightarrow X$ is said to be forward Proinov-type $\mathcal{Z}$-contraction or $f$-Proinov-type $\mathcal{Z}$-contraction\textcolor{white}{a}with respect to $\xi\in\mathcal{Z}$ if 
    \begin{equation}\label{eq:1}
    \xi\left( \zeta\left(q\left(Tx, Ty\right)\right), \eta\left(q\left(x, y\right)\right)\right)\geq 0
    \end{equation}for\textcolor{white}{a}all $x, y\in X$ where $\zeta, \eta: (0, \infty)\rightarrow \mathbb{R}$ are two control functions with $\eta(t) < \zeta(t)$ for all $t\in Im(q)\setminus\{0\}$.
\end{defn}

\begin{defn}\label{defn:3.2}
    Let $T$ be a\textcolor{white}{a}self-mapping on a quasi-metric space\textcolor{white}{a}$(X, q)$.\textcolor{white}{a}Then $T$ is said to be backward Proinov-type $\mathcal{Z}$-contraction or $b$-Proinov-type $\mathcal{Z}$-contraction\textcolor{white}{a}with\textcolor{white}{a}respect to $\xi\in\mathcal{Z}$ if \begin{equation}\label{eq:2}
    \xi\left( \zeta\left(q\left(Tx, Ty\right)\right), \eta\left(q\left(y, x\right)\right)\right)\geq 0
    \end{equation}for all $x, y\in X$ where $\zeta, \eta: (0, \infty)\rightarrow \mathbb{R}$ are two control functions with $\eta(t) < \zeta(t)$ for all $t\in Im(q)\setminus\{0\}$.
\end{defn}
\begin{pr}\label{prop:3.1}
    An $f$-Proinov-type $\mathcal{Z}$-contraction is both $ff$-continuous and $bb$-continuous if the control function $\zeta$ is nondecreasing.
\end{pr}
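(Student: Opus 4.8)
The plan is to reduce both continuity statements to a single contractive inequality of the form $q(Tx,Ty)< q(x,y)$, extracted from the defining condition \eqref{eq:1}, and then to conclude by a sandwich argument. Recall from Definitions \ref{def:2.2} and \ref{def;2.3} that $ff$-continuity at $x$ means $q(x,x_n)\to 0 \Rightarrow q(Tx,Tx_n)\to 0$, while $bb$-continuity at $x$ means $q(x_n,x)\to 0 \Rightarrow q(Tx_n,Tx)\to 0$. The two cases differ only in the order of the arguments of $q$, and \eqref{eq:1} is tailored so that a suitable choice of the ordered pair recovers each one.

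First I would establish the core inequality: for any $x,y\in X$ with $q(Tx,Ty)>0$ one has $q(Tx,Ty)<q(x,y)$. Starting from \eqref{eq:1}, put $s=\zeta(q(Tx,Ty))$ and $t=\eta(q(x,y))$; these are strictly positive (note that $q(Tx,Ty)>0$ forces $x\neq y$ and hence $q(x,y)>0$ by the quasi-metric axioms), so axiom $(z_2)$ applies and gives $0\le \xi(s,t)<t-s$, whence $\zeta(q(Tx,Ty))<\eta(q(x,y))$. Combining this with the control-function hypothesis $\eta(u)<\zeta(u)$ on $Im(q)\setminus\{0\}$ yields $\zeta(q(Tx,Ty))<\zeta(q(x,y))$. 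Since $\zeta$ is nondecreasing, the contrapositive of monotonicity (if $q(Tx,Ty)\ge q(x,y)$ then $\zeta(q(Tx,Ty))\ge\zeta(q(x,y))$, a contradiction) forces $q(Tx,Ty)<q(x,y)$; and when $q(Tx,Ty)=0$ the bound $q(Tx,Ty)\le q(x,y)$ holds trivially since $q\ge 0$. Thus $q(Tx,Ty)\le q(x,y)$ for all $x,y\in X$.

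To finish $ff$-continuity, take $x_n\xrightarrow{f}x$ and apply the core inequality to the pair $(x,x_n)$, obtaining $0\le q(Tx,Tx_n)\le q(x,x_n)\to 0$; the squeeze theorem gives $q(Tx,Tx_n)\to 0$, i.e. $Tx_n\xrightarrow{f}Tx$. For $bb$-continuity, take $x_n\xrightarrow{b}x$ and apply the core inequality to the pair $(x_n,x)$, obtaining $0\le q(Tx_n,Tx)\le q(x_n,x)\to 0$, so $Tx_n\xrightarrow{b}Tx$. As $x$ is arbitrary, $T$ is simultaneously $ff$- and $bb$-continuous.

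I expect the only delicate point to be the sign bookkeeping needed to invoke $(z_2)$: that axiom requires both slots of $\xi$ to be strictly positive, so one must argue that $\zeta(q(Tx,Ty))$ and $\eta(q(x,y))$ are positive precisely on the relevant pairs, equivalently that the composition in \eqref{eq:1} lands in the domain $[0,\infty)$ of the simulation function and is nonzero exactly when $q(Tx,Ty)>0$. Once this is secured, the role of the nondecreasing hypothesis on $\zeta$ is exactly to transfer the inequality between $\zeta$-values back to an inequality between the underlying distances; without monotonicity the implication $\zeta(q(Tx,Ty))<\zeta(q(x,y)) \Rightarrow q(Tx,Ty)<q(x,y)$ would break down. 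Note that axiom $(z_3)$ is not needed here, as it is reserved for the Cauchy/fixed-point existence argument rather than for continuity.
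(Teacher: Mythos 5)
Your proposal is correct and takes essentially the same route as the paper's own proof: apply the bound $\xi(s,t)<t-s$ from $(z_2)$ to the contraction condition \eqref{eq:1} to obtain $\zeta\left(q\left(Tx,Tx_n\right)\right)<\eta\left(q\left(x,x_n\right)\right)<\zeta\left(q\left(x,x_n\right)\right)$, use the nondecreasing hypothesis on $\zeta$ to pass back to $q\left(Tx,Tx_n\right)<q\left(x,x_n\right)$, and squeeze; the paper runs the same chain inline for each continuity mode. If anything you are slightly more careful than the paper, which silently skips the degenerate case $q\left(Tx,Ty\right)=0$ (where $\zeta$ is not even defined, its domain being $(0,\infty)$) and cites $(z_3)$ where $(z_2)$ is meant, while the positivity caveat you flag for invoking $(z_2)$ is an implicit assumption on the control functions shared equally by the paper's argument, not a gap peculiar to yours.
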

\begin{proof}
    Consider a quasi-metric space $(X,q)$ and an $f$-Proinov-type $\mathcal{Z}$-contraction $T:X\rightarrow X$ with respect to the simulation function $\xi$. Let $x\in X$. Consider the sequence $\{x_n\}$ in $X$ which $f$-converges\textcolor{white}{a}to $x$. That is, $q(x,x_n)\rightarrow 0$ as $n\rightarrow\infty$. Then by inequality(\ref{eq:1}) and condition $(z_3)$ in Definition\ref{defn:1.2} we get the following:
    \begin{equation*}
        \begin{split}
            0&\leq\xi\left(\zeta\left(q\left(Tx, Tx_n\right)\right),\eta\left(q\left(x,x_n\right)\right)\right)\\&<\eta\left(q\left(x,x_n\right)\right)-\zeta\left(q\left(Tx, Tx_n\right)\right).
        \end{split}
    \end{equation*}
    This implies $\zeta\left(q\left(Tx, Tx_n\right)\right)<\eta\left(q\left(x,x_n\right)\right)$. Since $\eta(t)<\zeta(t)$ for all $t\in Im(q)\setminus\{0\}$, one can have $\zeta\left(q\left(Tx, Tx_n\right)\right)<\eta\left(q\left(x,x_n\right)\right)<\zeta\left(q\left(x,x_n\right)\right)$. As it is given that $\zeta$ is nondecreasing, we get $q\left(Tx, Tx_n\right)<q\left(x,x_n\right)\rightarrow 0$, which implies $Tx_n\xrightarrow[]{f}Tx$. Hence $T$ is $ff$-continuous.\\Proof of $bb$-continuity follows by a similar argument.
\end{proof}
\begin{pr}\label{prop:3.2}
    A $b$-Proinov-type $\mathcal{Z}$-contraction is both $bf$-continuous and $fb$-continuous if the control function $\zeta$ is nondecreasing.
\end{pr}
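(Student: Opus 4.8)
The plan is to mirror the proof of Proposition \ref{prop:3.1}, exploiting the asymmetry built into the defining inequality (\ref{eq:2}) of a $b$-Proinov-type $\mathcal{Z}$-contraction: here the argument of $\eta$ is $q(y,x)$ rather than $q(x,y)$. The single computational engine I would set up is the chain obtained from the simulation function. Whenever $q(Tu,Tv)>0$, condition $(z_2)$ of Definition \ref{defn:1.2} together with (\ref{eq:2}) gives
\begin{equation*}
0\leq \xi\bigl(\zeta(q(Tu,Tv)),\,\eta(q(v,u))\bigr) < \eta(q(v,u)) - \zeta(q(Tu,Tv)),
\end{equation*}
so that $\zeta(q(Tu,Tv)) < \eta(q(v,u)) < \zeta(q(v,u))$, where the last inequality uses $\eta(t)<\zeta(t)$ on $Im(q)\setminus\{0\}$. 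Since $\zeta$ is nondecreasing, this forces $q(Tu,Tv) < q(v,u)$, exactly as in Proposition \ref{prop:3.1}.

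First I would prove $bf$-continuity. Fix $x\in X$ and a sequence $x_n\xrightarrow[]{b}x$, that is, $q(x_n,x)\to 0$. Applying the displayed chain with $u=x$ and $v=x_n$ yields $q(Tx,Tx_n) < q(x_n,x)\to 0$, whence $Tx_n\xrightarrow[]{f}Tx$; this is $bf$-continuity at $x$. For $fb$-continuity, fix $x\in X$ and $x_n\xrightarrow[]{f}x$, i.e.\ $q(x,x_n)\to 0$, and apply the same chain with the roles reversed, $u=x_n$ and $v=x$. This gives $q(Tx_n,Tx) < q(x,x_n)\to 0$, so $Tx_n\xrightarrow[]{b}Tx$, which is $fb$-continuity. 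Since $x$ was arbitrary, both hold on all of $X$.

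The point to get right, and really the only subtlety since no step is genuinely hard, is the bookkeeping forced by the absence of symmetry: because (\ref{eq:2}) pairs $q(Tu,Tv)$ with $q(v,u)$, one must choose the substitution so that the mode of convergence hypothesized for $\{x_n\}$ lands in the $q(v,u)$ slot, while the mode of convergence to be concluded for $\{Tx_n\}$ lands in the $q(Tu,Tv)$ slot; this is precisely what swaps the output continuity from $ff$/$bb$ (Proposition \ref{prop:3.1}) to $bf$/$fb$ here. A minor loose end is that $\zeta,\eta$ are defined only on $(0,\infty)$: any index with $q(Tx,Tx_n)=0$ or $q(x_n,x)=0$ already sits at (or maps to) the limit, so such terms can be discarded without affecting the convergence conclusion.
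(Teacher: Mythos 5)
Your proposal is correct and is essentially the paper's own argument: the paper simply states that the proof is comparable to that of Proposition \ref{prop:3.1}, and your write-up is exactly that mirrored chain $0\leq\xi(\zeta(q(Tu,Tv)),\eta(q(v,u)))<\eta(q(v,u))-\zeta(q(Tu,Tv))$, with the substitutions $(u,v)=(x,x_n)$ and $(u,v)=(x_n,x)$ routing the hypothesized convergence into the $q(v,u)$ slot. Your explicit handling of the asymmetry bookkeeping and of indices where $q$ vanishes only makes precise what the paper leaves implicit.
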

\begin{proof}
    The proof is comparable to that of Proposition \ref{prop:3.1}.
\end{proof}
\begin{pr}\label{prop:3.3}
    In a $\delta$-symmetric quasi-metric space $(X,q)$, both $f$-Proinov-type $\mathcal{Z}$-contraction and $b$-Proinov-type $\mathcal{Z}$-contraction satisfy all four types of continuity if the control function $\zeta$ is not decreasing.
\end{pr}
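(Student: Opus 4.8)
The plan is to reduce all four continuity claims to the single structural fact, recorded in Remark~\ref{rmk:2.1}, that a $\delta$-symmetric quasi-metric space identifies forward and backward convergence: a sequence is $f$-convergent to a point if and only if it is $b$-convergent to that same point, with the same limit. Once this equivalence is in hand, the source topology and the target topology may each be chosen freely, so every one of $ff$-, $fb$-, $bf$- and $bb$-continuity becomes equivalent to any other, and it suffices to verify a single one of them for each map.

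First I would handle the $f$-Proinov-type $\mathcal{Z}$-contraction $T$. By Proposition~\ref{prop:3.1}, since $\zeta$ is nondecreasing, $T$ is already $ff$-continuous. To promote this to the remaining three, I would take an arbitrary sequence $\{x_n\}$ and a point $x\in X$ and suppose $x_n$ converges to $x$ in whichever of the two senses the desired continuity demands at the source. By Remark~\ref{rmk:2.1} this convergence is simultaneously an $f$-convergence, so $ff$-continuity yields $Tx_n\xrightarrow[]{f}Tx$; invoking Remark~\ref{rmk:2.1} once more, now at the target, gives $Tx_n\xrightarrow[]{b}Tx$ as well. Hence $T$ carries the convergent sequence to a sequence that converges to $Tx$ in both target topologies, irrespective of the source topology, which is exactly the assertion that $T$ is $ff$-, $fb$-, $bf$- and $bb$-continuous.

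Next I would treat the $b$-Proinov-type $\mathcal{Z}$-contraction, for which Proposition~\ref{prop:3.2} already supplies $bf$- and $fb$-continuity under the same monotonicity hypothesis on $\zeta$. The identical swapping argument, again justified entirely by Remark~\ref{rmk:2.1}, upgrades these to all four continuity types.

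The main point to get right is not a genuine mathematical obstacle but the bookkeeping of interchanging the two convergence notions at both the domain and codomain ends; the $\delta$-symmetry inequality $q(x,y)\le \delta\,q(y,x)$ collapses the distinction between $\tau_f$ and $\tau_b$ at the level of sequences, so no estimate beyond Remark~\ref{rmk:2.1} and the already-established Propositions~\ref{prop:3.1} and~\ref{prop:3.2} is required.
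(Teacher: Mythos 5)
Your proposal is correct and takes essentially the same route as the paper: the paper's proof likewise invokes the equivalence of $f$- and $b$-convergence in a $\delta$-symmetric space (Remark \ref{rmk:2.1}) and then cites Propositions \ref{prop:3.1} and \ref{prop:3.2}. The only difference is that you spell out the source/target swapping bookkeeping that the paper leaves implicit, which is a faithful expansion rather than a different argument.
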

\begin{proof}
    Since $(X,q)$ is $\delta$-symmetric quasi-metric space, we have $f$-convergence implies $b$-convergence and vice versa in $X$. Then the result follows from Propositions \ref{prop:3.1} and \ref{prop:3.2}.
\end{proof}
The notion of\textcolor{white}{a}asymptotic regularity\textcolor{white}{a}was brought into literature by Browder\textcolor{white}{a}and Petryshyn in\cite{7}.
\begin{defn}\label{defn:3.3}\cite{7}
    Let\textcolor{white}{a}$(X, d)$ be a\textcolor{white}{a}metric\textcolor{white}{a}space and $T$ be a self-mapping on $X$. Then $T$\textcolor{white}{a}is said\textcolor{white}{a}to be asymptotically regular at a\textcolor{white}{a}point $x\in X$\textcolor{white}{a}if $\lim\limits_{n\rightarrow\infty}d\left(T^nx, T^{n+1}x\right)= 0$.\\ Furthermore, $T$ is asymptotically regular on $X$ if it is asymptotically regular at each $x\in X$.
\end{defn}
Inspired by this definition, Hamed H. Alsulami et al.\cite{9} introduced the idea of asymptotic regularity in quasi-metric spaces as:
\begin{defn}\label{defn:3.4}\cite{9}
    Let $T$ be a\textcolor{white}{a}self-map on a quasi-metric\textcolor{white}{a}space $(X, q)$. Then $T$ is alleged to be 
    \begin{enumerate}
        \item asymptotically forward regular or asymptotically $f$-regular\textcolor{white}{a}at some point\textcolor{white}{a}$x\in X$ if $\lim\limits_{n\rightarrow\infty}q\left(T^nx, T^{n+1}x\right)= 0$ and\textcolor{white}{a}asymptotically $f$-regular\textcolor{white}{a}on $X$ if it is asymptotically $f$-regular at every point of $X$;
        \item asymptotically backward regular or asymptotically $b$-regular\textcolor{white}{a}at some point\textcolor{white}{a}$x\in X$ if $\lim\limits_{n\rightarrow\infty}q\left(T^{n+1}x, T^{n}x\right)= 0$ and asymptotically\textcolor{white}{a}$b$-regular\textcolor{white}{a}on $X$ if it is\textcolor{white}{a}asymptotically $b$-regular\textcolor{white}{a}at every point of $X$;
        \item asymptotically\textcolor{white}{a}regular if it is both\textcolor{white}{a}asymptotically $f$-regular as well as asymptotically\textcolor{white}{a}$b$-regular.
    \end{enumerate}
\end{defn}
The following lemma provides some conditions for the $f$- Proinov-type $\mathcal{Z}$-contraction to be asymptotically regular.
\begin{lem}\label{lem:3.1}
    Let $T$ be an $f$-Proinov-type $\mathcal{Z}$-contraction with respect to $\xi\in\mathcal{Z}$ on a quasi-metric space $(X, q)$. If the control functions $\zeta \text{ and }\eta$ satisfy\textcolor{white}{a}the following\textcolor{white}{a}conditions:
    \begin{enumerate}[label=(\roman*)]
    \item $\zeta$ is non\textcolor{white}{a}decreasing;
    \item $\eta(t)< \zeta(t)$ for every $t\in Im(q)\setminus\{0\}$;
    \item $\lim\limits_{n\rightarrow\infty}\zeta\left(x_n\right)= \lim\limits_{n\rightarrow\infty}\zeta\left(y_n\right)> 0$ for any two sequences $\{x_n\} \text{ and }\{y_n\}$ in $(0,\infty)$ with $\lim\limits_{n\rightarrow\infty}x_n= \lim\limits_{n\rightarrow\infty}y_n>0$.
    \end{enumerate}
    Then $T$ is asymptotically\textcolor{white}{a}regular in\textcolor{white}{a}$X$.
\end{lem}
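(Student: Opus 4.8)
The plan is to fix an arbitrary $x\in X$, write $x_n=T^nx$ for $n\geq 0$, and establish the two one-sided statements $\lim_{n\to\infty}q(x_n,x_{n+1})=0$ and $\lim_{n\to\infty}q(x_{n+1},x_n)=0$ separately; by Definition~\ref{defn:3.4} these together say precisely that $T$ is asymptotically regular at $x$, and since $x$ is arbitrary this gives the claim on all of $X$. Before starting I would dispose of the degenerate case: if $x_{n_0}=x_{n_0+1}$ for some $n_0$, then $x_{n_0}$ is a fixed point, both iterate-distances vanish for all $n\geq n_0$, and there is nothing to prove. So I may assume $x_n\neq x_{n+1}$, i.e. $q(x_n,x_{n+1})>0$ and $q(x_{n+1},x_n)>0$, for every $n$.

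For the forward statement I would feed the pair $(x_{n-1},x_n)$ into the defining inequality~(\ref{eq:1}). Arguing exactly as in the proof of Proposition~\ref{prop:3.1}, condition $(z_2)$ together with condition~(ii) gives the chain
\[
\zeta\big(q(x_n,x_{n+1})\big)<\eta\big(q(x_{n-1},x_n)\big)<\zeta\big(q(x_{n-1},x_n)\big),
\]
and because $\zeta$ is nondecreasing (condition~(i)) this forces $q(x_n,x_{n+1})<q(x_{n-1},x_n)$. Hence $\{q(x_n,x_{n+1})\}$ is strictly decreasing and bounded below by $0$, so it converges to some limit $L\geq 0$, and the whole problem reduces to showing $L=0$.

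This contradiction step is where I expect the real work (and the only genuine use of the technical hypothesis~(iii)) to lie. Suppose $L>0$, and set $a_n=q(x_n,x_{n+1})$ and $b_n=q(x_{n-1},x_n)=a_{n-1}$, so that $a_n,b_n\in(0,\infty)$ and $a_n,b_n\to L>0$. Condition~(iii) then yields $\lim_{n\to\infty}\zeta(a_n)=\lim_{n\to\infty}\zeta(b_n)=\ell$ for a common value $\ell>0$, and squeezing $\eta(b_n)$ inside the chain $\zeta(a_n)<\eta(b_n)<\zeta(b_n)$ forces $\lim_{n\to\infty}\eta(b_n)=\ell>0$ as well. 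Thus $s_n:=\zeta(a_n)$ and $t_n:=\eta(b_n)$ are sequences in $(0,\infty)$ with equal positive limit, so $(z_3)$ applies and gives $\limsup_{n\to\infty}\xi\big(\zeta(a_n),\eta(b_n)\big)<0$; but inequality~(\ref{eq:1}) forces $\xi(\zeta(a_n),\eta(b_n))\geq 0$ for every $n$, a contradiction. Therefore $L=0$, proving asymptotic $f$-regularity at $x$.

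Finally I would repeat the argument verbatim for the backward statement, this time feeding the pair $(x_n,x_{n-1})$ into~(\ref{eq:1}) --- which is legitimate since (\ref{eq:1}) is required to hold for \emph{all} pairs in $X$. This produces the strictly decreasing positive sequence $c_n=q(x_{n+1},x_n)$ with $c_n<c_{n-1}$, and the identical squeeze-plus-$(z_3)$ contradiction shows $c_n\to 0$. Combining the two halves yields asymptotic $f$- and $b$-regularity at every $x$, hence asymptotic regularity of $T$ on $X$.
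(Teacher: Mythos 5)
Your proposal is correct and follows essentially the same route as the paper's proof: dispose of the degenerate case, use $(z_2)$ with conditions (ii) and (i) to show the iterate distances $q(T^nx,T^{n+1}x)$ decrease to some $L\geq 0$, then rule out $L>0$ by squeezing $\eta$ between the two $\zeta$-values via condition (iii) and invoking $(z_3)$ to contradict the contraction inequality. Your only addition is that you spell out the backward half by feeding $(x_n,x_{n-1})$ into the contraction condition, a step the paper compresses into ``in a similar way.''
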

\begin{proof}
    Let $x\in X$. Consider the sequence ${T^nx}$. If one can find an $N\in\mathbb{N}$ such that $T^nx= T^{n+1}x$ for every $n\geq N$, then the lemma follows. If not, suppose that $T^nx\neq T^{n+1}x$ for all $n\in\mathbb{N}$. Then, 
    \begin{equation*}
        \begin{split}
            0 &\leq\xi\left(\zeta\left(q\left(T^nx, T^{n+1}x\right)\right), \eta\left(q\left(T^{n-1}x, T^nx\right)\right)\right)\\ &\leq\eta\left(q\left(T^{n-1}x, T^nx\right)\right)- \zeta\left(q\left(T^nx, T^{n+1}x\right)\right).
        \end{split}
    \end{equation*}
    Then by condition $(ii)$ in the hypothesis, we get, $$\zeta\left(q\left(T^nx, T^{n+1}x\right)\right)\leq\eta\left(q\left(T^{n-1}x, T^nx\right)\right)< \zeta\left(q\left(T^{n-1}x, T^nx\right)\right).$$
    From condition $(i)$ in the hypothesis, it follows that $q\left(T^nx, T^{n+1}x\right)\leq q\left(T^{n-1}x, T^nx\right)$. Thus, the sequence $\{q\left(T^nx, T^{n+1}x\right)\}$ is decreasing and bounded below. Hence it converges to a limit, say $r\geq 0$. Let $r>0$. Then we have
    \begin{equation*}
        \begin{split}
            0 &\leq\xi\left(\zeta\left(q\left(T^nx, T^{n+1}x\right)\right), \eta\left(q\left(T^{n-1}x, T^nx\right)\right)\right)\\ &\leq\eta\left(q\left(T^{n-1}x, T^nx\right)\right)- \zeta\left(q\left(T^nx, T^{n+1}x\right)\right)\\ &< \zeta\left(q\left(T^{n-1}x, T^nx\right)\right)- \zeta\left(q\left(T^nx, T^{n+1}x\right)\right).
        \end{split}
    \end{equation*}
    From condition $(iii)$ in the hypothesis, as $n\rightarrow\infty$ we get,  $$\lim\limits_{n\rightarrow\infty}\eta\left(q\left(T^{n-1}x, T^nx\right)\right)=\lim\limits_{n\rightarrow\infty}\zeta\left(q\left(T^nx, T^{n+1}x\right)\right)> 0.$$ Now if we apply condition $(z_3)$ of simulation function, we obtain $$\limsup\limits_{n\rightarrow\infty}\xi\left(\zeta\left(q\left(T^nx, T^{n+1}x\right)\right), \eta\left(q\left(T^{n-1}x, T^nx\right)\right)\right)< 0.$$ This leads to a contradiction. Therefore $r=0$, which proves $T$ is asymptotically $f$-regular. We can demonstrate that $T$ is asymptotically $b$-regular in a similar way. Therefore, it follows that $T$ is asymptotically regular in $X$.
\end{proof}
The next lemma will provide conditions for $b$-Proinov-type $\mathcal{Z}$-contraction to be asymptotically regular. The proof for this lemma differs slightly from the proof for the previous lemma.
\begin{lem}\label{lem:3.2}
    Let\textcolor{white}{a}$T$ be\textcolor{white}{a}a $b$-Proinov-type $\mathcal{Z}$-contraction, on a quasi-metric\textcolor{white}{a}space $(X, q)$,with respect to $\xi\in\mathcal{Z}$. Let the control functions $\zeta, \eta$ follow the conditions:
    \begin{enumerate}[label=(\roman*)]
        \item $\zeta$ is non decreasing;
        \item $\eta(t)< \zeta(t)$ for all $t\in Im(q)\setminus\{0\}$;
        \item if $\{x_n\} \text{ and } \{y_n\}$ are two\textcolor{white}{a}sequences in\textcolor{white}{a}$(0, \infty)$ such\textcolor{white}{a}that $\lim\limits_{n\rightarrow\infty}x_n= \lim\limits_{n\rightarrow\infty}y_n> 0$ then $\lim\limits_{n\rightarrow\infty}\zeta(x_n)= \lim\limits_{n\rightarrow\infty}\zeta(y_n)> 0$.
    \end{enumerate}
    Then $T$\textcolor{white}{a}is asymptotically\textcolor{white}{a}regular in $X$.
\end{lem}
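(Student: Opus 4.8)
The plan is to keep the skeleton of Lemma \ref{lem:3.1} but to track the forward and backward step-lengths simultaneously, since the $b$-type inequality \eqref{eq:2} couples the two directions. Fix $x\in X$. If $T^Nx=T^{N+1}x$ for some $N$, the orbit is eventually constant and asymptotic regularity is immediate, so I would assume $T^nx\neq T^{n+1}x$ for all $n$ and set $a_n=q(T^nx,T^{n+1}x)$ and $b_n=q(T^{n+1}x,T^nx)$; under this assumption both are strictly positive for every $n$, so all the arguments fed to $\zeta,\eta$ lie in $Im(q)\setminus\{0\}$.

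First I would extract two recursive inequalities. Substituting $x=T^{n-1}x,\ y=T^nx$ into \eqref{eq:2} gives $0\le\xi(\zeta(a_n),\eta(b_{n-1}))$, so by $(z_2)$ we get $\zeta(a_n)<\eta(b_{n-1})<\zeta(b_{n-1})$, and since $\zeta$ is nondecreasing, $a_n<b_{n-1}$. Swapping the roles, i.e. taking $x=T^nx,\ y=T^{n-1}x$, yields $0\le\xi(\zeta(b_n),\eta(a_{n-1}))$ and hence $b_n<a_{n-1}$. Putting $m_n=\max\{a_n,b_n\}$, these two inequalities force $m_n<m_{n-1}$, so $\{m_n\}$ is strictly decreasing and bounded below and therefore converges to some $r\ge 0$; the goal is to show $r=0$.

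The decisive and least routine step is ruling out $r>0$, and this is exactly where the argument departs from Lemma \ref{lem:3.1}: because $a_n$ and $b_n$ need not be monotone individually, I cannot hand a single monotone sequence to $(z_3)$. Instead I would pass to a subsequence. Since each $m_n$ equals $a_n$ or $b_n$, one direction realizes the maximum infinitely often; say $a_{n_k}=m_{n_k}\to r$ (the other case is symmetric). From $a_{n_k}<b_{n_k-1}\le m_{n_k-1}$ together with $m_{n_k-1}\to r$ and $a_{n_k}\to r$, a squeeze gives $b_{n_k-1}\to r$ as well. Now both $a_{n_k}\to r>0$ and $b_{n_k-1}\to r>0$, so hypothesis (iii) yields $\lim_k\zeta(a_{n_k})=\lim_k\zeta(b_{n_k-1})=:L>0$; squeezing $\eta(b_{n_k-1})$ between $\zeta(a_{n_k})$ and $\zeta(b_{n_k-1})$ via $\zeta(a_{n_k})<\eta(b_{n_k-1})<\zeta(b_{n_k-1})$ then forces $\eta(b_{n_k-1})\to L$.

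Finally I would apply $(z_3)$ to the pair $s_k=\zeta(a_{n_k})$ and $t_k=\eta(b_{n_k-1})$, both converging to the common positive limit $L$, obtaining $\limsup_k\xi(\zeta(a_{n_k}),\eta(b_{n_k-1}))<0$, which contradicts the standing bound $0\le\xi(\zeta(a_{n_k}),\eta(b_{n_k-1}))$. Hence $r=0$, so $m_n\to 0$ and therefore $a_n\to 0$ and $b_n\to 0$ at once, i.e. $T$ is simultaneously asymptotically $f$-regular and asymptotically $b$-regular, thus asymptotically regular on $X$. The main obstacle throughout is the loss of direct monotonicity of the step-lengths caused by the $\eta(q(y,x))$ slot in \eqref{eq:2}; handling it through the auxiliary quantity $m_n=\max\{a_n,b_n\}$ and a subsequence on which one direction dominates is the key modification relative to Lemma \ref{lem:3.1}, and it conveniently delivers both regularities in a single stroke.
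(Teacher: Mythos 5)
Your proof is correct, and it reorganizes the paper's argument rather than reproducing it. The paper starts from the same two substitutions in \eqref{eq:2} (equivalently your $a_n<b_{n-1}$ and $b_n<a_{n-1}$), but composes them into the two-step inequality $q\left(T^nx,T^{n+1}x\right)\leq q\left(T^{n-2}x,T^{n-1}x\right)$, so that the even and odd subsequences of the forward step-lengths each decrease; it then rules out a positive limit for each parity class via $(z_3)$ and disposes of $b$-regularity with a ``similarly''. You instead make $m_n=\max\{a_n,b_n\}$ strictly decreasing in one step and run a single subsequence-plus-squeeze argument, which delivers asymptotic $f$- and $b$-regularity simultaneously. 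Your packaging buys two things: it avoids the parity bookkeeping, and it makes explicit the squeeze that the paper leaves implicit --- when the paper feeds $q\left(T^{2n}x,T^{2n-1}x\right)$ into hypothesis (iii) it never says why this backward distance tends to the same limit $r$ as $x_{2n}$ (one must extract $x_{2n}\leq q\left(T^{2n}x,T^{2n-1}x\right)\leq x_{2n-2}$ from its chain of $\zeta$-inequalities); your $a_{n_k}<b_{n_k-1}\leq m_{n_k-1}$ supplies exactly that step. Two micro-points worth a word, both equally present in the paper's version: the strictness $a_n<b_{n-1}$ does follow from $\zeta(a_n)<\eta(b_{n-1})<\zeta(b_{n-1})$ with $\zeta$ nondecreasing (equality $a_n=b_{n-1}$ would force $\zeta(a_n)=\zeta(b_{n-1})$), but this deserves the one-line justification; and before invoking $(z_3)$ you should note that $s_k=\zeta(a_{n_k})$ and $t_k=\eta(b_{n_k-1})$, a priori only real-valued, are eventually positive because they converge to $L>0$, so $(z_3)$ applies after discarding finitely many terms. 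Neither point is a gap in your approach.
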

\begin{proof}
    Let $x\in X$. Define $x_n= q(T^nx, T^{n
    +1}x)$. If one can find an $N\in \mathbb{N}$ such that $T^nx= T^{n+1}x$ for all $n\geq N$, then the lemma follows. If not, suppose that $T^nx\neq T^{n+1}x$ for all $n\in\mathbb{N}$. Then,
    \begin{equation*}
        \begin{split}
            0 &\leq \xi\left(\zeta\left(q\left(T^nx, T^{n+1}x\right)\right), \eta\left(q\left(T^nx, T^{n-1}x\right)\right)\right)\\ &\leq\eta\left(q\left(T^nx, T^{n-1}x\right)\right)-\zeta\left(q\left(T^nx, T^{n+1}x\right)\right),
        \end{split}
    \end{equation*}
    which will imply $\zeta\left(q\left(T^nx, T^{n+1}x\right)\right)\leq\eta\left(q\left(T^nx, T^{n-1}x\right)\right)$.
    Then it follows from this and the condition $(ii)$ in the hypothesis that $\zeta\left(q\left(T^nx, T^{n+1}x\right)\right)\leq\eta\left(q\left(T^nx, T^{n-1}x\right)\right)\leq\zeta\left(q\left(T^nx, T^{n-1}x\right)\right)\leq\eta\left(q\left(T^{n-2}x, T^{n-1}x\right)\right)\leq\zeta\left(q\left(T^{n-2}x, T^{n-1}x\right)\right)$. Therefore, from the condition $(i)$ in the hypothesis we get $q\left(T^nx, T^{n+1}x\right)\leq q\left(T^{n-2}x, T^{n-1}x\right)$. i.e., $x_n\leq x_{n-2}$ for all $n\in\mathbb{N}$. This implies that the sequences $\{x_{2n}\} \text{ and }\{x_{2n+1}\}$ are decreasing sequences. We claim that both the sequences $\{x_{2n}\}\text{ and }\{x_{2n+1}\}$ converge to zero. If not, let $x_{2n}\rightarrow r>0$. Then,
    \begin{equation*}
        \begin{split}
            0 &\leq\xi\left(\zeta\left(q\left(T^{2n}x, T^{2n+1}x\right)\right), \eta\left(q\left(T^{2n}x, T^{2n-1}x\right)\right)\right)\\ &\leq \eta\left(q\left(T^{2n}x, T^{2n-1}x\right)\right)- \zeta\left(q\left(T^{2n}x, T^{2n+1}x\right)\right)\\&\leq\zeta\left(q\left(T^{2n}x, T^{2n-1}x\right)\right)- \zeta\left(q\left(T^{2n}x, T^{2n+1}x\right)\right)
        \end{split}
    \end{equation*}
    From condition $(iii)$ in the hypothesis, we get $$\lim\limits_{n\rightarrow\infty}\zeta\left(q\left(T^{2n}x, T^{2n-1}x\right)\right)=\lim\limits_{n\rightarrow\infty} \zeta\left(q\left(T^{2n}x, T^{2n+1}x\right)\right)>0.$$ This implies that $\lim\limits_{n\rightarrow\infty}\eta\left(q\left(T^{2n}x, T^{2n-1}x\right)\right)=\lim\limits_{n\rightarrow\infty}\zeta\left(q\left(T^{2n}x, T^{2n+1}x\right)\right)>0$. Then by condition $(z_3)$ of simulation functions we get, $$\limsup\limits_{n\rightarrow\infty}\xi\left(\zeta\left(q\left(T^{2n}x, T^{2n+1}x\right)\right), \eta\left(q\left(T^{2n}x, T^{2n-1}x\right)\right)\right)< 0,$$ which gives a contradiction. Thus $\{x_{2n}\}$ converges to zero. Similarly, we can prove that $\{x_{2n+1}\}$ also converges to zero. Since both the sequences $\{x_{2n}\} \text{ and }\{x_{2n+1}\}$ decrease and converge to zero, we get $\{x_n\}$ also converges to zero. Hence $T$ is asymptotically $f$-regular. Similarly, we can prove that $T$ is asymptotically $b$-regular and hence it follows that $T$ is asymptotically regular. 
\end{proof}
The following lemmas are crucial for demonstrating our key findings.
\begin{lem}\label{lem:3.3}\cite{2}
    Let $\{x_n\}$ be a sequence such that $\lim\limits_{n\rightarrow\infty}q\left(x_n, x_{n+1}\right)= 0$ in a $\delta$-symmetric quasi-metric space $(X, q)$. If $\{x_n\}$ is not $f$-Cauchy, then one can find an $\epsilon> o$ and two subsequences\textcolor{white}{a}$\{x_{n_k}\} \text{ and\textcolor{white}{a}}\{x_{m_k}\} \text{ of }\{x_n\}$ such\textcolor{white}{a}that $k< m_k< n_k$\textcolor{white}{a}and $\lim\limits_{k\rightarrow\infty}q\left(x_{m_k}, x_{n_k}\right)= \lim\limits_{k\rightarrow\infty}q\left(x_{m_k+1}, x_{n_k}\right)= \lim\limits_{k\rightarrow\infty}q\left(x_{m_k}, x_{n_k+1}\right)= \lim\limits_{k\rightarrow\infty}q\left(x_{m_k+1}, x_{n_k+1}\right)= \epsilon$.
\end{lem}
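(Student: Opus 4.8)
The plan is to run the standard negation-of-Cauchy extraction and then pin the four limits down to the common value $\epsilon$ using the triangle inequality, the hypothesis $q(x_n,x_{n+1})\to 0$, and the $\delta$-symmetry of $(X,q)$. First I would unpack the assumption that $\{x_n\}$ is not $f$-Cauchy: this produces a fixed $\epsilon>0$ such that for every $N$ there are indices $n>m\ge N$ with $q(x_m,x_n)\ge\epsilon$. Using this, for each $k\in\mathbb{N}$ I would select an index $m_k\ge k$ admitting some later index at distance at least $\epsilon$, and then take $n_k$ to be the \emph{smallest} index exceeding $m_k$ for which $q(x_{m_k},x_{n_k})\ge\epsilon$. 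Since $m_k\ge k$ we have $m_k\to\infty$ and hence $n_k\to\infty$, so every consecutive distance indexed near $m_k$ or $n_k$ tends to $0$ by hypothesis. The minimality of $n_k$ is the crucial gain, as it yields $q(x_{m_k},x_{n_k-1})<\epsilon$.

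I would then establish the first limit. The triangle inequality gives
$$\epsilon\le q(x_{m_k},x_{n_k})\le q(x_{m_k},x_{n_k-1})+q(x_{n_k-1},x_{n_k})<\epsilon+q(x_{n_k-1},x_{n_k}),$$
and since $n_k\to\infty$ forces $q(x_{n_k-1},x_{n_k})\to 0$, a squeeze yields $\lim_{k\to\infty}q(x_{m_k},x_{n_k})=\epsilon$.

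For the remaining three limits I would perturb one endpoint at a time. Writing the two triangle inequalities
$$q(x_{m_k+1},x_{n_k})\le q(x_{m_k+1},x_{m_k})+q(x_{m_k},x_{n_k}),\qquad q(x_{m_k},x_{n_k})\le q(x_{m_k},x_{m_k+1})+q(x_{m_k+1},x_{n_k}),$$
I sandwich $q(x_{m_k+1},x_{n_k})$ between $q(x_{m_k},x_{n_k})$ and the same quantity plus error terms. This is the one place where $\delta$-symmetry is indispensable: the hypothesis controls only $q(x_{m_k},x_{m_k+1})$, so to kill the reversed term I would invoke $q(x_{m_k+1},x_{m_k})\le\delta\,q(x_{m_k},x_{m_k+1})\to 0$. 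This gives $q(x_{m_k+1},x_{n_k})\to\epsilon$, and the identical argument moving $n_k$ to $n_k+1$ (again using $\delta$-symmetry on $q(x_{n_k+1},x_{n_k})$) gives $q(x_{m_k},x_{n_k+1})\to\epsilon$. The last limit $q(x_{m_k+1},x_{n_k+1})\to\epsilon$ then follows by applying both perturbations in succession.

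The main obstacle is conceptual rather than computational: in a genuine quasi-metric space the hypothesis supplies only the forward consecutive distances $q(x_n,x_{n+1})\to 0$, whereas the triangle estimates above unavoidably generate the reversed distances $q(x_{n+1},x_n)$, which need not vanish in the absence of symmetry. It is precisely the $\delta$-symmetry of $(X,q)$ that rescues the propagation and forces all four limits to coincide. The other delicate point is the minimality choice of $n_k$, which is what upgrades the mere inequality $q(x_{m_k},x_{n_k})\ge\epsilon$ into the exact limit $\epsilon$.
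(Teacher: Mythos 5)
Your proposal is correct. The paper does not actually prove this lemma --- it is imported from \cite{2} --- but your argument is the standard one and coincides in technique with the proof the paper does supply for the companion Lemma \ref{lem:3.4}: negate $f$-Cauchyness, extract $m_k<n_k$ with a minimality condition that forces a strict $\epsilon$-bound, squeeze via the triangle inequality using $q\left(x_n,x_{n+1}\right)\to 0$, and invoke $\delta$-symmetry to control the reversed consecutive distances $q\left(x_{n+1},x_n\right)$, which you rightly flag as the one point where the hypothesis alone is insufficient. The only (immaterial) difference from the in-paper proof of Lemma \ref{lem:3.4} is where the minimality sits: there the extraction arranges $q\left(x_{m_k-1},x_{n_k}\right)<\epsilon$ by perturbing the first index, whereas your minimal choice of $n_k$ yields $q\left(x_{m_k},x_{n_k-1}\right)<\epsilon$, which is exactly the variant needed to reach the ``$+1$'' limits of this statement; one cosmetic fix is to apply the non-Cauchy condition with $N=k+1$ so that the required strict inequality $k<m_k$ holds.
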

\begin{lem}\label{lem:3.4}
    Let $\{x_n\}$ be a sequence such that $\lim\limits_{n\rightarrow\infty}q\left(x_n, x_{n+1}\right)= 0$ in a $\delta$-symmetric quasi-metric space $(x, q)$. If the\textcolor{white}{a}sequence $\{x_n\}$ is\textcolor{white}{a}not $f-$Cauchy, then there\textcolor{white}{a}exist $\epsilon> o$ and two\textcolor{white}{a}subsequences $\{x_{n_k}\} \text{ and }\{x_{m_k}\} \text{ of }\{x_n\}$ such\textcolor{white}{a}that $k< m_k< n_k$ and $\lim\limits_{k\rightarrow\infty}q\left(x_{m_k}, x_{n_k}\right)= \lim\limits_{k\rightarrow\infty}q\left(x_{m_k-1}, x_{n_k}\right)= \lim\limits_{k\rightarrow\infty}q\left(x_{m_k-1}, x_{n_k-1}\right)= \epsilon$.
\end{lem}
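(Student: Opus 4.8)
The plan is to mimic the standard construction behind Lemma~\ref{lem:3.3}, adapting the triangle-inequality bookkeeping so that the surviving limits involve the \emph{predecessor} indices $m_k-1$ and $n_k-1$ rather than the successors. First I would unpack the hypothesis that $\{x_n\}$ is not $f$-Cauchy: this yields an $\epsilon>0$ such that for every $N$ there are indices $m,n$ with $N\le m<n$ and $q(x_m,x_n)\ge\epsilon$. Using this I build the two subsequences by choosing, for each $k$, an index $m_k>k$ and then letting $n_k$ be the \emph{smallest} index exceeding $m_k$ for which $q(x_{m_k},x_{n_k})\ge\epsilon$. The minimality of $n_k$ forces $q(x_{m_k},x_{n_k-1})<\epsilon$, and combining this with the triangle inequality $q(x_{m_k},x_{n_k})\le q(x_{m_k},x_{n_k-1})+q(x_{n_k-1},x_{n_k})$ together with $q(x_{n_k-1},x_{n_k})\to0$ sandwiches $q(x_{m_k},x_{n_k})$ between $\epsilon$ and $\epsilon+q(x_{n_k-1},x_{n_k})$, giving $\lim_k q(x_{m_k},x_{n_k})=\epsilon$.

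The crucial preliminary observation is that $\delta$-symmetry converts the forward hypothesis $q(x_n,x_{n+1})\to0$ into the backward statement $q(x_{n+1},x_n)\le\delta\,q(x_n,x_{n+1})\to0$, so \emph{both} one-step distances vanish regardless of orientation. With this in hand I pass to the predecessor indices. For $q(x_{m_k-1},x_{n_k})$ I would write the two-sided estimate
\[
q(x_{m_k},x_{n_k})-q(x_{m_k},x_{m_k-1})\le q(x_{m_k-1},x_{n_k})\le q(x_{m_k-1},x_{m_k})+q(x_{m_k},x_{n_k}),
\]
where the left inequality comes from $q(x_{m_k},x_{n_k})\le q(x_{m_k},x_{m_k-1})+q(x_{m_k-1},x_{n_k})$. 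Since $q(x_{m_k},x_{m_k-1})\to0$ and $q(x_{m_k-1},x_{m_k})\to0$ by the previous observation, letting $k\to\infty$ pins $\lim_k q(x_{m_k-1},x_{n_k})=\epsilon$. An entirely analogous sandwich, now peeling off the one-step distances $q(x_{n_k},x_{n_k-1})$ and $q(x_{n_k-1},x_{n_k})$ at the $n_k$-end, upgrades this to $\lim_k q(x_{m_k-1},x_{n_k-1})=\epsilon$.

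The only real obstacle is the asymmetry of $q$: because the triangle inequality is directional, each sandwich needs the consecutive distance in the \emph{correct} order, and it is precisely $\delta$-symmetry that guarantees the oppositely-oriented one-step distances also tend to zero, so both sides of every estimate collapse to $\epsilon$. As a shortcut I note that the whole statement can instead be obtained from Lemma~\ref{lem:3.3} by reindexing: that lemma produces subsequences $\{x_{\tilde m_k}\},\{x_{\tilde n_k}\}$ with $k<\tilde m_k<\tilde n_k$ for which $q(x_{\tilde m_k},x_{\tilde n_k})$, $q(x_{\tilde m_k},x_{\tilde n_k+1})$ and $q(x_{\tilde m_k+1},x_{\tilde n_k+1})$ all converge to $\epsilon$, and then setting $m_k=\tilde m_k+1$, $n_k=\tilde n_k+1$ gives exactly the three required limits with the index constraint $k<m_k<n_k$ preserved. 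I would present the direct construction as the main argument and mention this reindexing as a remark.
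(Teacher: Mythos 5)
Your argument is correct, and its engine is the same as the paper's: build subsequences witnessing the failure of $f$-Cauchyness, then run two-sided triangle-inequality estimates, using $\delta$-symmetry exactly where you use it, namely to convert $q(x_n,x_{n+1})\rightarrow 0$ into $q(x_{n+1},x_n)\leq\delta q(x_n,x_{n+1})\rightarrow 0$ so that one-step distances of \emph{both} orientations vanish. The difference is in which decremented inequality the construction builds in. The paper arranges $q(x_{m_k},x_{n_k})\geq\epsilon$ together with $q(x_{m_k-1},x_{n_k})<\epsilon$ (minimality at the $m_k$-end), so the single chain $\epsilon\leq q(x_{m_k},x_{n_k})\leq q(x_{m_k},x_{m_k-1})+q(x_{m_k-1},x_{n_k})<q(x_{m_k},x_{m_k-1})+\epsilon$ delivers the first two limits at once, and only one further sandwich at the $n_k$-end is needed for $q(x_{m_k-1},x_{n_k-1})\rightarrow\epsilon$. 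You instead take $n_k$ minimal, so your built-in inequality is $q(x_{m_k},x_{n_k-1})<\epsilon$, and you pay for it with one extra (perfectly valid) sandwich at the $m_k$-end to recover $q(x_{m_k-1},x_{n_k})\rightarrow\epsilon$; both routes are equally rigorous. One wording repair: ``choosing, for each $k$, an index $m_k>k$ and then letting $n_k$ be the smallest index exceeding $m_k$ with $q(x_{m_k},x_{n_k})\geq\epsilon$'' is not quite right as stated, since for an arbitrarily chosen $m_k$ no such $n_k$ need exist; the failure of $f$-Cauchyness should first supply a violating pair $m_k<r$ with $q(x_{m_k},x_r)\geq\epsilon$, after which $n_k$ is taken minimal in $\interval[open left]{m_k}{r}$. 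Finally, your closing remark is a genuinely different and slicker derivation that the paper does not exploit: shifting the subsequences of Lemma \ref{lem:3.3} by one, $m_k=\tilde m_k+1$ and $n_k=\tilde n_k+1$, turns its limits $q(x_{\tilde m_k+1},x_{\tilde n_k+1})\rightarrow\epsilon$, $q(x_{\tilde m_k},x_{\tilde n_k+1})\rightarrow\epsilon$ and $q(x_{\tilde m_k},x_{\tilde n_k})\rightarrow\epsilon$ into exactly the three limits required here, with $k<m_k<n_k$ preserved, so Lemma \ref{lem:3.4} becomes a two-line corollary of Lemma \ref{lem:3.3}; the only price is that the statement then inherits whatever proof underlies the cited Lemma \ref{lem:3.3}, whereas the paper's (and your) direct construction is self-contained.
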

\begin{proof}
    Since $(X, q)$ is a $\delta$-symmetric quasi-metric space, we can always write $q\left(x_{n+1}, x_n\right)\leq\delta q\left(x_n, x_{n+1}\right)$. Therefore, the\textcolor{white}{a}sequence $\{q\left(x_{n+1}, x_n\right)\}$ will also converge to zero.
    If $\{x_n\}$ is not $f$-Cauchy, then we can find an $\epsilon> 0$ and two\textcolor{white}{a}subsequences $\{x_{m_k}\}\text{ and\textcolor{white}{a}}\{x_{n_k}\}$ of $\{x_n\}$ with $k< m_k< n_k$ such that $q\left(x_{m_k}, x_{n_k}\right)\geq\epsilon$ and $q\left(x_{m_k-1}, x_{n_k}\right)<\epsilon$. Then, $$\epsilon\leq q\left(x_{m_k}, x_{n_k}\right)\leq q\left(x_{m_k}, x_{m_k-1}\right)+q\left(x_{m_k-1}, x_{n_k}\right)< q\left(x_{m_k}, x_{m_k-1}\right)+\epsilon.$$
    Since $\lim\limits_{k\rightarrow\infty}q\left(x_{n+1}, x_n\right)= 0$, we get $$\lim\limits_{k\rightarrow\infty}q\left(x_{m_k}, x_{n_k}\right)= \lim\limits_{k\rightarrow\infty}q\left(x_{m_k-1}, x_{n_k}\right)= \epsilon.$$ Now we have, $$q\left(x_{m_k-1}, x_{n_k}\right)\leq q\left(x_{m_k-1}, x_{n_k-1}\right)+q\left(x_{n_k-1}, x_{n_k}\right)\leq q\left(x_{m_k-1}, x_{n_k}\right)+q\left(x_{n_k}, x_{n_k-1}\right)+q\left(x_{n_k-1}, x_{n_k}\right).$$
    Since $\lim\limits_{k\rightarrow\infty}q\left(x_{m_k-1}, x_{n_k}\right)=\epsilon$ and $\lim\limits_{k\rightarrow\infty}q\left(x_{n_k}, x_{n_k-1}\right)= \lim\limits_{k\rightarrow\infty}q\left(x_{n_k-1}, x_{n_k}\right)= 0$, letting $k\rightarrow\infty$ we get, $\lim\limits_{k\rightarrow\infty}q\left(x_{m_k-1}, x_{n_k-1}\right)=\epsilon$.
\end{proof}
\subsection{Fixed point theorems for forward and backward Proinov-type \texorpdfstring{$\mathcal{Z}$}{Z}-contractions}
We are now ready to demonstrate the\textcolor{white}{a}existence and\textcolor{white}{a}uniqueness of a fixed\textcolor{white}{a}point for $f$-Proinov-type $\mathcal{Z}$-contraction and $b$-Proinov-type $\mathcal{Z}$-contraction.
\begin{theorem}\label{thm:3.1}
    Let\textcolor{white}{a}$(X, q)$ be an $f$-complete\textcolor{white}{a}$\delta$-symmetric quasi-metric\textcolor{white}{a}space. Let\textcolor{white}{a}$T:X\rightarrow X$ be\textcolor{white}{a}a $f$-Proinov-type $\mathcal{Z}$-contraction\textcolor{white}{a}with respect\textcolor{white}{a}to $\xi\in\mathcal{Z}$. If the control functions $\zeta \text{ and }\eta$ follow the below conditions:
    \begin{enumerate}[label=(\roman*)]
    \item $\zeta$ is non decreasing;
    \item $\eta(t)< \zeta(t)$ for\textcolor{white}{a}every $t\in Im(q)\setminus\{0\}$;
    \item if $\{x_n\} \text{ and\textcolor{white}{a}}\{y_n\}$ are\textcolor{white}{a}two sequences\textcolor{white}{a}in $(0,\infty)$ such that $\lim\limits_{n\rightarrow\infty}x_n= \lim\limits_{n\rightarrow\infty}y_n>0$, then $\lim\limits_{n\rightarrow\infty}\zeta\left(x_n\right)= \lim\limits_{n\rightarrow\infty}\zeta\left(y_n\right)> 0$.
    \end{enumerate}
    Then $T$ has\textcolor{white}{a}a unique\textcolor{white}{a}fixed point in $X$. Moreover,\textcolor{white}{a}the iterative\textcolor{white}{a}sequence $\{T^nx\}$ will $f$-converge to the fixed point for any\textcolor{white}{a}$x\in X$.
\end{theorem}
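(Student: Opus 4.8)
The plan is to run the standard Picard-iteration argument adapted to the forward structure: fix an arbitrary $x_0\in X$, set $x_n=T^nx_0$, show the orbit $\{x_n\}$ is $f$-Cauchy, use $f$-completeness to extract a forward limit, and then identify that limit as the unique fixed point. First I would dispose of the degenerate case where $x_N=x_{N+1}$ for some $N$ (then $x_N$ is already fixed and the iterates are eventually constant). Otherwise we may assume $x_n\neq x_{n+1}$ for every $n$. The entry point is Lemma \ref{lem:3.1}, whose three hypotheses are exactly conditions (i)--(iii): it yields that $T$ is asymptotically regular, so $\lim_{n\to\infty}q(x_n,x_{n+1})=0$, and by $\delta$-symmetry also $\lim_{n\to\infty}q(x_{n+1},x_n)=0$. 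This is precisely the standing hypothesis needed to invoke Lemma \ref{lem:3.3}.

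The main obstacle, and the heart of the proof, is establishing that $\{x_n\}$ is $f$-Cauchy; I would argue by contradiction. If it is not $f$-Cauchy, Lemma \ref{lem:3.3} supplies $\epsilon>0$ and subsequences with $k<m_k<n_k$ for which $\lim_{k\to\infty}q(x_{m_k},x_{n_k})=\lim_{k\to\infty}q(x_{m_k+1},x_{n_k+1})=\epsilon$. Applying the contraction inequality (\ref{eq:1}) with $x=x_{m_k}$ and $y=x_{n_k}$ gives $\xi\bigl(\zeta(q(x_{m_k+1},x_{n_k+1})),\,\eta(q(x_{m_k},x_{n_k}))\bigr)\geq 0$, whence axiom $(z_2)$ forces $\zeta(q(x_{m_k+1},x_{n_k+1}))<\eta(q(x_{m_k},x_{n_k}))$, and condition (ii) then gives the chain $\zeta(q(x_{m_k+1},x_{n_k+1}))<\eta(q(x_{m_k},x_{n_k}))<\zeta(q(x_{m_k},x_{n_k}))$ (valid for $k$ large, where all arguments are positive). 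Since both $q(x_{m_k+1},x_{n_k+1})$ and $q(x_{m_k},x_{n_k})$ tend to $\epsilon>0$, condition (iii) forces the two outer $\zeta$-terms to share a common limit $L>0$, so a squeeze pins $\eta(q(x_{m_k},x_{n_k}))\to L$ as well. Setting $s_k=\zeta(q(x_{m_k+1},x_{n_k+1}))$ and $t_k=\eta(q(x_{m_k},x_{n_k}))$, both tending to $L>0$, axiom $(z_3)$ yields $\limsup_{k\to\infty}\xi(s_k,t_k)<0$, contradicting $\xi(s_k,t_k)\geq 0$ for all $k$. Hence $\{x_n\}$ is $f$-Cauchy.

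It remains to pass to the limit and prove uniqueness, both of which are comparatively routine. By $f$-completeness there is $x^*\in X$ with $x_n\xrightarrow[]{f}x^*$. Proposition \ref{prop:3.1} (applicable since $\zeta$ is nondecreasing by (i)) says $T$ is $ff$-continuous, so $Tx_n=x_{n+1}\xrightarrow[]{f}Tx^*$; but also $x_{n+1}\xrightarrow[]{f}x^*$, and forward limits are unique in a $\delta$-symmetric space (if $q(a,x_n)\to 0$ and $q(b,x_n)\to 0$, then $q(a,b)\leq q(a,x_n)+q(x_n,b)\leq q(a,x_n)+\delta q(b,x_n)\to 0$, so $a=b$). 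Therefore $Tx^*=x^*$. For uniqueness, suppose $x^*\neq y^*$ were both fixed; then $q(x^*,y^*)>0$ lies in $Im(q)\setminus\{0\}$, and feeding $x^*,y^*$ into (\ref{eq:1}) together with $(z_2)$ gives $\zeta(q(x^*,y^*))<\eta(q(x^*,y^*))$, in direct contradiction with condition (ii). This forces $x^*=y^*$, completing the argument.

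I expect essentially no difficulty in the limit-passing and uniqueness steps; the one place demanding care is the Cauchy argument above, specifically the correct bookkeeping of the double inequality and the verification that conditions (ii)--(iii) together with $(z_3)$ close the contradiction. Everything else reduces to citing Lemmas \ref{lem:3.1} and \ref{lem:3.3}, Proposition \ref{prop:3.1}, and the $\delta$-symmetry established in Remark \ref{rmk:2.1}.
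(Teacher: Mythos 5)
Your proposal is correct and follows essentially the same route as the paper: asymptotic regularity via Lemma \ref{lem:3.1}, the contradiction argument through Lemma \ref{lem:3.3} combined with conditions (ii)--(iii) and axiom $(z_3)$ to get $f$-Cauchyness, then $f$-completeness, fixed-point identification, and uniqueness via $(z_2)$ against condition (ii). The only cosmetic differences are that you identify the limit as a fixed point via Proposition \ref{prop:3.1} ($ff$-continuity) together with uniqueness of forward limits, whereas the paper directly derives $q(Tw,T^nx)<q(w,T^{n-1}x)\rightarrow 0$ from the contraction inequality --- both are valid and equivalent in substance.
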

\begin{proof}
    Let $x\in X$. By Lemma \ref{lem:3.1} it is clear that $T$ is asymptotically $f$-regular. Thus, the sequence $\{q\left(T^nx, T^{n+1}x\right)\}$ converges to zero. Define,for each $n\in\mathbb{N}$, $x_n=T^nx$. We claim that $\{x_n\}$ is $f$-Cauchy. If not, then by Lemma \ref{lem:3.3} one can find an $\epsilon> 0$ and subsequences $\{x_{m_k}\}, \{x_{n_k}\}$ of $\{x_n\}$ with $k< m_k< n_k$ such that $\lim\limits_{k\rightarrow\infty}q\left(x_{m_k}, x_{n_k}\right)= \lim\limits_{k\rightarrow\infty}q\left(x_{m_k+1}, x_{n_k+1}\right)= \epsilon> 0$. Then by condition $(iii)$ in the hypothesis we obtain, $$\lim\limits_{k\rightarrow\infty}\zeta\left(q\left(x_{m_k}, x_{n_k}\right)\right)= \lim\limits_{k\rightarrow\infty}\zeta\left(q\left(x_{m_k+1}, x_{n_k+1}\right)\right)> 0.$$ Now, from the contraction condition, we have,
    \begin{equation*}
        \begin{split}
            0 &\leq\xi\left(\zeta\left(q\left(x_{m_k+1}, x_{n_k+1}\right)\right), \eta\left(q\left(x_{m_k}, x_{n_k}\right)\right)\right)\\ &< \eta\left(q\left(x_{m_k}, x_{n_k}\right)\right)- \zeta\left(q\left(x_{m_k+1}, x_{n_k+1}\right)\right)\\ &< \zeta\left(q\left(x_{m_k}, x_{n_k}\right)\right)- \zeta\left(q\left(x_{m_k+1}, x_{n_k+1}\right)\right).
        \end{split}
    \end{equation*} As $k\rightarrow\infty$, by condition $(iii)$ in the hypothesis, we get $$\lim\limits_{k\rightarrow\infty}\zeta\left(q\left(x_{m_k}, x_{n_k}\right)\right)= \lim\limits_{k\rightarrow\infty}\zeta\left(q\left(x_{m_k+1}, x_{n_k+1}\right)\right)> 0.$$ This implies that, $\lim\limits_{k\rightarrow\infty}\eta\left(q\left(x_{m_k}, x_{n_k}\right)\right)= \lim\limits_{k\rightarrow\infty}\zeta\left(q\left(x_{m_k+1}, x_{n_k+1}\right)\right)> 0$.
    Hence from the condition $(z_3)$ of simulation functions we get,$$\limsup\limits_{k\rightarrow\infty}\xi\left(\zeta\left(q\left(x_{m_k+1}, x_{n_k+1}\right)\right), \eta\left(q\left(x_{m_k}, x_{n_k}\right)\right)\right)< 0,$$ which gives a\textcolor{white}{a}contradiction. Thus $\{x_n\}$ is\textcolor{white}{a}$f$-Cauchy. Since $X$\textcolor{white}{a}is $f$-complete\textcolor{white}{a}$\{x_n\}$ will $f$-converge in $X$, say to $w$. Now, we claim that $w$ is a fixed point of $T$. For, we have 
    \begin{equation*}
        \begin{split}
            0 &\leq\xi\left(\zeta\left(q\left(Tw, T^nx\right)\right), \eta\left(q\left(w, T^{n-1}x\right)\right)\right)\\&< \eta\left(q\left(w, T^{n-1}x\right)\right)- \zeta\left(q\left(Tw, T^nx\right)\right),
        \end{split}
    \end{equation*}
    which implies $\zeta\left(q\left(Tw, T^nx\right)\right)< \eta\left(q\left(w, T^{n-1}x\right)\right)$. Now by using condition $(ii)$ followed by $(i)$ from the hypothesis, we get $$\zeta\left(q\left(Tw, T^nx\right)\right)< \eta\left(q\left(w, T^{n-1}x\right)\right)< \zeta\left(q\left(w, T^{n-1}x\right)\right),$$ which implies $q\left(Tw, T^nx\right)< q\left(w, T^{n-1}x\right)$.Then we get, $$0\leq\lim\limits_{n\rightarrow\infty}q\left(Tw, T^nx\right)< \lim\limits_{n\rightarrow\infty}q\left(w, T^{n-1}x\right)= 0,$$ which will imply $Tw= \lim\limits_{n\rightarrow\infty}T^nx= w$. Hence $w$ is a fixed point of $T$.\\ For proving the uniqueness, let $w'\in X$ be another fixed point of $T$. Then,
    \begin{equation*}
        \begin{split}
            0 &\leq\xi\left(\zeta\left(q\left(Tw, Tw'\right)\right),\eta\left(q\left(w, w'\right)\right)\right)\\&< \eta\left(q\left(w, w'\right)\right)- \zeta\left(q\left(Tw, Tw'\right)\right)\\&< \zeta\left(q\left(w, w'\right)\right)- \zeta\left(q\left(Tw, Tw'\right)\right)\\&= \zeta\left(q\left(w, w'\right)\right)- \zeta\left(q\left(w, w'\right)\right)\\&= 0,
        \end{split}
    \end{equation*}
    which gives a contradiction. Thus, the fixed\textcolor{white}{a}point of\textcolor{white}{a}$T$ is unique.
\end{proof}
Next, we\textcolor{white}{a}will give\textcolor{white}{a}an example\textcolor{white}{a}that will illustrate our theorem.
\begin{eg}
    Consider $X=[0, 1]$. Define $q: X\times X\rightarrow \mathbb{R}$ such that:
    \begin{equation*}
    q(x, y)=
        \begin{cases}
            2x & \text{ if } x>y\\
            y & \text{ if } x<y\\
            0 & \text{ if } x=y.
        \end{cases}
    \end{equation*}
    It is easy to see that $q$ is a $2$-symmetric quasi-metric on $X$. Also, $X$ is $f$-complete under the quasi-metric $q$. Define $T: X\rightarrow X$ such that $T(x)= \frac{x^2}{4x^2+3}$. Clearly, $T$ is an increasing map. Also consider the control functions $\zeta, \eta:(0, \infty)\rightarrow \mathbb{R}$ given by $\zeta(t)= t \text{ and } \eta(t)= \frac{t^2}{3}$. Here one can easily verify that both the functions $\zeta\text{ and }\eta$ satisfy the conditions $(i)- (iii)$ in the hypothesis of Theorem \ref{thm:3.1}. Next we define another function $\xi:[0, \infty)\times[0, \infty)\rightarrow\mathbb{R}$ such that $\xi(s, t)= \frac{t}{t+1}- s$. Then $\xi\in \mathcal{Z}$.\\
    \textbf{Case\textcolor{white}{a}1:} If\textcolor{white}{a}$x> y$, then\textcolor{white}{a}$q(x, y)= 2x, T(x)= \frac{x^2}{4x^2+3}\text{ and }T(y)=\frac{y^2}{4y^2+3}$. Since $T$ is increasing, we get $Tx> Ty$. Hence, $q(Tx, Ty)= \frac{2x^2}{4x^2+3}$. Then we have $\zeta\left(q\left(Tx, Ty\right)\right)= q(Tx, Ty)= \frac{2x^2}{4x^2+3} \text{ and }\eta\left(q\left(x, y\right)\right)= \frac{4x^2}{3}$. Therefore, we get the following. 
   \begin{equation*}
     \begin{split}
        \xi\left(\zeta\left(q\left(Tx, Ty\right)\right), \eta\left(q\left(x, y\right)\right)\right)&= \xi\left(\frac{2x^2}{4x^2+3}, \frac{4x^2}{3}\right)\\&=\frac{\frac{4x^2}{3}}{\frac{4x^2}{3}+1}-\frac{2x^2}{4x^2+3}\\&=\frac{4x^2}{4x^2+3}-\frac{2x^2}{4x^2+3}\\&=\frac{2x^2}{4x^2+3}\geq 0
    \end{split}
  \end{equation*}
   \textbf{Case\textcolor{white}{a}2:} If\textcolor{white}{a}$x< y$, then \textcolor{white}{a}$q(x, y)= y$ and $Tx< Ty$. Hence, $q\left(Tx, Ty\right)= Ty= \frac{y^2}{4y^2+3}$. Then $\zeta\left(q\left(Tx, Ty\right)\right)= \frac{y^2}{4y^2+3}\text{ and }\eta\left(q\left(x, y\right)\right)= \frac{y^2}{3}$. Therefore,
   \begin{equation*}
       \begin{split}
           \xi\left(\zeta\left(q\left(Tx, Ty\right)\right), \eta\left(q\left(x, y\right)\right)\right)&= \xi\left(\frac{y^2}{4y^2+3}, \frac{y^2}{3}\right)\\&=\frac{\frac{y^2}{3}}{\frac{y^2}{3}+1}-\frac{y^2}{4y^2+3}\\&=\frac{y^2}{y^2+3}-\frac{y^2}{4y^2+3}\\&=\frac{y^2\left(4y^2+3-\left(y^2+3\right)\right)}{\left(y^2+3\right)\left(4y^2+3\right)}\\&=\frac{3y^2}{\left(y^2+3\right)\left(4y^2+3\right)}\geq 0.
       \end{split}
   \end{equation*}
   \textbf{Case 3:} If $x= y$, then we have $Tx= Ty$ and therefore $q\left(x, y\right)= q\left(Tx, Ty\right)= 0$. Therefore, $$\xi\left(\zeta\left(q\left(Tx, Ty\right)\right), \eta\left(q\left(x, y\right)\right)\right)= \xi(0, 0)= 0.$$
   Hence, in each case, we get $\xi\left(\zeta\left(q\left(Tx, Ty\right)\right), \eta\left(q\left(x, y\right)\right)\right)\geq 0$.
   Thus the map $T$ is an $f$-Proinov- type $\mathcal{Z}$-contraction in $X$. It can be easily observed that $x= 0$ is the unique fixed point of $T$ in $X$.\\ \\ Now we will study the convergence behaviour of the iterated sequence $\{T^n(x_0)\}$ for the map $T$. We will plot the graph of convergence of $\{T^n(x_0)\}$ for different initial ponts $x_0$ in $[0, 1]$. Here we have chosen the points $1,\  0.75,\  0.5 \text{ and } 0.25$ as the initial points. The data used to plot the graph is given in Table 1. Figure \ref{fig1} will display the graph of rate of convergence of $\{T^n(x_0)\}$.
\begin{center}
    \includegraphics[width=1\textwidth]{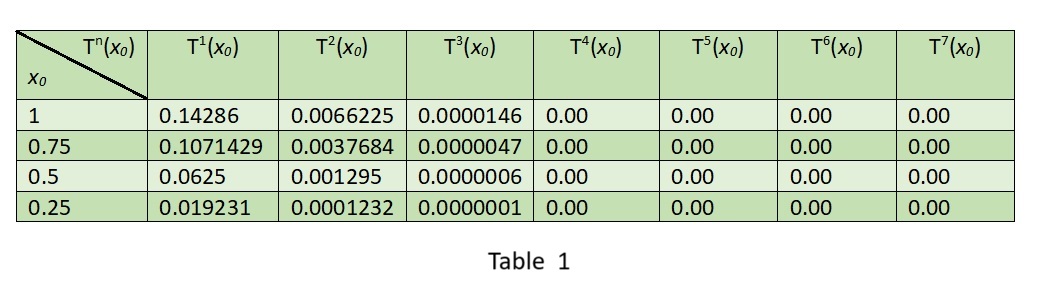}
\end{center}
Here we can observe that, after the third iterate the values of $T^n(x_0)$ is zero or very much close to zero so that we can approximate it to zero. So, as the initial point comes close to zero, the rate of convergence of $\{T^n(x_0)\}$ increases. \\ \\
\begin{figure}[h]
    \centering
    \includegraphics[width=1\textwidth]{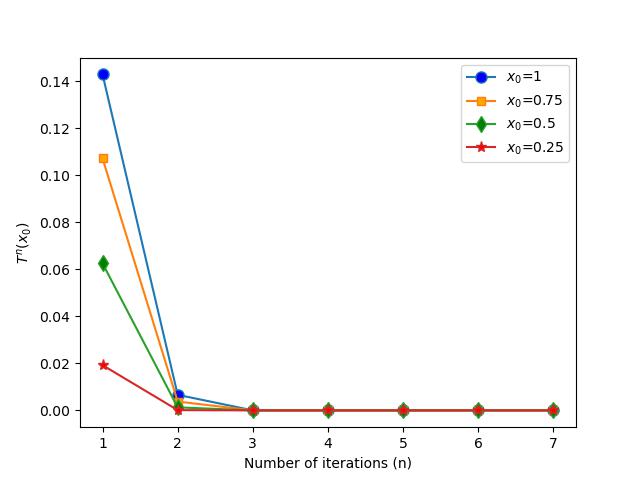}
    \caption{Rate of convergence of the iterated sequence $\{T^n(x)\}$}.
    \label{fig1}
\end{figure}
\end{eg}
Our next theorem will prove the fixed point theorem for $b$-Proinov-type\textcolor{white}{a}$\mathcal{Z}$-contraction.
\begin{theorem}\label{thm:3.2}
    Let\textcolor{white}{a}$(X, q)$ be\textcolor{white}{a}a $\delta$-symmetric quasi-metric\textcolor{white}{a}space and $T$ be a $b$-Proinov-type\textcolor{white}{a}$\mathcal{Z}$-contraction, on $X$, with\textcolor{white}{a}respect to $\xi\in\mathcal{Z}$. Let\textcolor{white}{a}the control functions $\zeta, \eta$ follow the conditions:
    \begin{enumerate}[label=(\roman*)]
        \item $\zeta$ is non decreasing;
        \item $\eta(t)< \zeta(t)$ for\textcolor{white}{a}all $t\in Im(q)\setminus\{0\}$;
        \item if $\{x_n\} \text{ and\textcolor{white}{a}} \{y_n\}$ are\textcolor{white}{a}two sequences\textcolor{white}{a}in $(0, \infty)$ such that $\lim\limits_{n\rightarrow\infty}x_n= \lim\limits_{n\rightarrow\infty}y_n> 0$ then $\lim\limits_{n\rightarrow\infty}\zeta(x_n)= \lim\limits_{n\rightarrow\infty}\zeta(y_n)> 0$.
    \end{enumerate}
    Then $T$\textcolor{white}{a}has a unique\textcolor{white}{a}fixed point\textcolor{white}{a}in $X$,\textcolor{white}{a}provided the space $X$ is $f$-complete. In this case, the sequence $\{T^nx\}$ will $f$-converge to the fixed point for any $x\in X$.
\end{theorem}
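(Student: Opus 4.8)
The plan is to mirror the architecture of the proof of Theorem~\ref{thm:3.1}, adapting every step to the backward condition~(\ref{eq:2}). The single genuinely new feature is that (\ref{eq:2}) compares $\zeta(q(Tx,Ty))$ with $\eta(q(y,x))$, so the reversed distance $q(y,x)$ enters and one must keep track of distances taken in both orders. I would fix $x\in X$, put $x_n=T^nx$, and first invoke Lemma~\ref{lem:3.2} to obtain that $T$ is asymptotically regular; in particular $q(x_n,x_{n+1})\to 0$, and $\delta$-symmetry gives $q(x_{n+1},x_n)\to 0$ as well, which is exactly the input required by the Cauchy lemmas.

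Next I would prove that $\{x_n\}$ is $f$-Cauchy by contradiction. If it is not, Lemma~\ref{lem:3.4} produces $\epsilon>0$ and subsequences with $k<m_k<n_k$ for which
\[
\lim_{k\to\infty}q(x_{m_k},x_{n_k})=\lim_{k\to\infty}q(x_{m_k-1},x_{n_k})=\lim_{k\to\infty}q(x_{m_k-1},x_{n_k-1})=\epsilon .
\]
Applying (\ref{eq:2}) to the pair $(x_{m_k-1},x_{n_k-1})$ yields $\xi(\zeta(q(x_{m_k},x_{n_k})),\eta(q(x_{n_k-1},x_{m_k-1})))\ge 0$; combined with $(z_2)$, hypothesis $(ii)$ and the monotonicity $(i)$ this gives the sandwich $\zeta(q(x_{m_k},x_{n_k}))\le \eta(q(x_{n_k-1},x_{m_k-1}))\le \zeta(q(x_{n_k-1},x_{m_k-1}))$ and, in particular, $q(x_{m_k},x_{n_k})\le q(x_{n_k-1},x_{m_k-1})$.

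The main obstacle is that $q(x_{n_k-1},x_{m_k-1})$ is a backward distance whose limit is not provided by Lemma~\ref{lem:3.4}. I would pin it down as follows. Applying (\ref{eq:2}) to the reversed pair $(x_{n_k-1},x_{m_k-1})$ gives $q(x_{n_k},x_{m_k})\le q(x_{m_k-1},x_{n_k-1})$, whence $\limsup_k q(x_{n_k},x_{m_k})\le\epsilon$; then the triangle inequality
\[
q(x_{n_k-1},x_{m_k-1})\le q(x_{n_k-1},x_{n_k})+q(x_{n_k},x_{m_k})+q(x_{m_k},x_{m_k-1})
\]
together with asymptotic regularity forces $\limsup_k q(x_{n_k-1},x_{m_k-1})\le\epsilon$, while the inequality $q(x_{m_k},x_{n_k})\le q(x_{n_k-1},x_{m_k-1})$ gives $\liminf_k q(x_{n_k-1},x_{m_k-1})\ge\epsilon$. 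Hence $q(x_{n_k-1},x_{m_k-1})\to\epsilon$ as well. Now both arguments inside the two outer $\zeta$-terms of the sandwich converge to $\epsilon$, so hypothesis $(iii)$ makes those terms share a common positive limit; the squeeze then drives $\eta(q(x_{n_k-1},x_{m_k-1}))$ to that same limit, and $(z_3)$ yields $\limsup_k\xi(\zeta(q(x_{m_k},x_{n_k})),\eta(q(x_{n_k-1},x_{m_k-1})))<0$, contradicting (\ref{eq:2}). Therefore $\{x_n\}$ is $f$-Cauchy, and by $f$-completeness it $f$-converges to some $w\in X$.

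It remains to show that $w$ is the unique fixed point. Applying (\ref{eq:2}) to $(w,x_{n-1})$ gives $q(Tw,x_n)\le q(x_{n-1},w)$; since $\delta$-symmetry converts $x_{n-1}\xrightarrow{f}w$ into $q(x_{n-1},w)\to 0$, we obtain $q(Tw,x_n)\to 0$, and then $q(Tw,w)\le q(Tw,x_n)+q(x_n,w)\to 0$ forces $Tw=w$. For uniqueness, suppose $Tw=w$ and $Tw'=w'$ with $w\ne w'$; applying (\ref{eq:2}) to $(w,w')$ gives $\zeta(q(w,w'))<\zeta(q(w',w))$, while applying it to $(w',w)$ gives $\zeta(q(w',w))<\zeta(q(w,w'))$, a contradiction. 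I expect the backward-distance argument of the third paragraph to be the crux, since that is precisely where the proof departs from the forward case of Theorem~\ref{thm:3.1}.
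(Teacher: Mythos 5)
Your proof is correct, and it keeps the paper's overall architecture (asymptotic regularity via Lemma~\ref{lem:3.2}, Cauchy-by-contradiction, $f$-completeness, then existence and uniqueness), but the crucial Cauchy step is organized genuinely differently. The paper invokes Lemma~\ref{lem:3.3} and Lemma~\ref{lem:3.4} \emph{simultaneously}, so that one pair of subsequences satisfies both $q(x_{m_k+1},x_{n_k+1})\to\epsilon$ and $q(x_{m_k-1},x_{n_k-1})\to\epsilon$; it then applies (\ref{eq:2}) to the pair $(x_{m_k},x_{n_k})$, which produces the backward term $\eta(q(x_{n_k},x_{m_k}))$, and controls it through the chain $\zeta(q(x_{n_k},x_{m_k}))\le\zeta(q(x_{m_k-1},x_{n_k-1}))$ -- an inequality that itself rests on a reversed application of the contraction which the paper leaves silent -- before squeezing with condition $(iii)$ and concluding via $(z_3)$. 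You instead use only Lemma~\ref{lem:3.4}, apply the contraction one index lower, to $(x_{m_k-1},x_{n_k-1})$, and spend your effort pinning down the limit $q(x_{n_k-1},x_{m_k-1})\to\epsilon$ of the genuinely backward distance, via the reversed-pair contraction application together with the triangle inequality and asymptotic regularity. Both routes are sound; yours buys two things: it does not need the two Cauchy lemmas to hold for a single common pair of subsequences (a point the paper glosses over, justified only because both lemmas construct their subsequences identically), and it makes explicit the reversed contraction application that the paper's inequality chain uses tacitly. Your final paragraph also fills in what the paper dismisses with ``mimics Theorem~\ref{thm:3.1}'': in the backward setting, uniqueness genuinely requires applying (\ref{eq:2}) to both ordered pairs $(w,w')$ and $(w',w)$ to close the loop $\zeta(q(w,w'))<\zeta(q(w',w))<\zeta(q(w,w'))$, and the fixed-point step needs $\delta$-symmetry to convert $f$-convergence into $q(x_{n-1},w)\to 0$; you handle both correctly. (Both your argument and the paper's share the same tacit assumption that the relevant values of $\zeta$ and $\eta$ are positive when $(z_2)$ and $(z_3)$ are invoked -- harmless for $(z_3)$ since your sequences have positive limits -- so you introduce no gap that the paper does not already have.)
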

\begin{proof}
     Let $x\in X$. By Lemma \ref{lem:3.2}, it is clear that $T$ is asymptotically $f$-regular. Thus, the sequence $\{q\left(T^nx, T^{n+1}x\right)\}$ converges to zero. Let $x_n=T^nx\text{ for each }n\in\mathbb{N}$. We assert that $\{x_n\}$ is $f$-Cauchy. If not, then by Lemma \ref{lem:3.3} and Lemma \ref{lem:3.4} one can find an\textcolor{white}{a}$\epsilon> 0$ and\textcolor{white}{a}subsequences $\{x_{m_k}\}, \{x_{n_k}\}$\textcolor{white}{a}of $\{x_n\}$\textcolor{white}{a}with $k< m_k< n_k$ such\textcolor{white}{a}that $\lim\limits_{k\rightarrow\infty}q\left(x_{m_k}, x_{n_k}\right)= \lim\limits_{k\rightarrow\infty}q\left(x_{m_k+1}, x_{n_k+1}\right)= \lim\limits_{k\rightarrow\infty}q\left(x_{m_k-1}, x_{n_k-1}\right)= \epsilon> 0$. Then by condition $(iii)$ in the hypothesis we obtain, $$\lim\limits_{k\rightarrow\infty}\zeta\left(q\left(x_{m_k-1}, x_{n_k-1}\right)\right)= \lim\limits_{k\rightarrow\infty}\zeta\left(q\left(x_{m_k+1}, x_{n_k+1}\right)\right)> 0.$$ Now, from the contraction condition and condition $(i)$ in the hypothesis we have,
    \begin{equation*}
        \begin{split}
            0 &\leq\xi\left(\zeta\left(q\left(x_{m_k+1}, x_{n_k+1}\right)\right), \eta\left(q\left(x_{n_k}, x_{m_k}\right)\right)\right)\\ &< \eta\left(q\left(x_{n_k}, x_{m_k}\right)\right)- \zeta\left(q\left(x_{m_k+1}, x_{n_k+1}\right)\right)\\ &< \zeta\left(q\left(x_{n_k}, x_{m_k}\right)\right)- \zeta\left(q\left(x_{m_k+1}, x_{n_k+1}\right)\right)\\&\leq\zeta\left(q\left(x_{m_k-1}, x_{n_k-1}\right)\right)- \zeta\left(q\left(x_{m_k+1}, x_{n_k+1}\right)\right).
        \end{split}
    \end{equation*} As $k\rightarrow\infty$, we get,
\begin{equation*}
    \begin{split}
0&\leq\lim\limits_{k\rightarrow\infty}\left(\eta\left(q\left(x_{n_k}, x_{m_k}\right)\right)- \zeta\left(q\left(x_{m_k+1}, x_{n_k+1}\right)\right)\right)\\&< \lim\limits_{k\rightarrow\infty}\left(\zeta\left(q\left(x_{m_k-1}, x_{n_k-1}\right)\right)- \zeta\left(q\left(x_{m_k+1}, x_{n_k+1}\right)\right)\right)= 0.
    \end{split}
\end{equation*} This implies that, $\lim\limits_{k\rightarrow\infty}\eta\left(q\left(x_{n_k}, x_{m_k}\right)\right)= \lim\limits_{k\rightarrow\infty}\zeta\left(q\left(x_{m_k+1}, x_{n_k+1}\right)\right)> 0$.
    Hence from the condition $(z_3)$ of simulation functions, we get,$$\limsup\limits_{k\rightarrow\infty}\xi\left(\zeta\left(q\left(x_{m_k+1}, x_{n_k+1}\right)\right), \eta\left(q\left(x_{n_k}, x_{m_k}\right)\right)\right)< 0,$$ which gives a\textcolor{white}{a}contradiction.\textcolor{white}{a}Thus $\{x_n\}$ is $f$-Cauchy. Since\textcolor{white}{a}$X$ is $f$-complete,\textcolor{white}{a}$\{x_n\}$ will $f$-converge in $X$, say to $w$.\\ The remaining part of the proof mimics the proof of the Theorem \ref{thm:3.1}.
\end{proof}
\section{Application}
\subsection{Fractals Generated by Proinov-type \texorpdfstring{$\mathcal{Z}$}{Z}-contractions}
As an application of our fixed point results, we will extend them to fractal theory.\\
M. F. Barnsley\cite{12,13} mathematically described fractals as fixed points of set-valued maps. The concept of fractals was extended to quasi-metric spaces by Nicolae Adrian Secelean et al.\cite{16}\\ For a quasi-metric space $(X, q)$, we denote by $\mathcal{H}_f(X)$, the collection of\textcolor{white}{a}all nonempty\textcolor{white}{a}$f$-compact\textcolor{white}{a}subsets of\textcolor{white}{a}$X$.\\ For\textcolor{white}{a}two $b$-bounded subsets $A, B$\textcolor{white}{a}of $X$, we define $Q(A, B)= \sup\limits_{x\in A}\inf\limits_{y\in B}q(x, y)$ and $h_q(A,B)= \max\{Q(A, B), Q(B, A)\}$.
\begin{remark}\label{rmk:4.1}\cite{16}
    The condition that $A, B$ to be $b$-bounded is demanded to have $Q(A, B)< \infty$. This inequality may fail if we consider $A, B$ to be $f$-bounded.
\end{remark}
\begin{pr}\label{prop:4.1}\cite{16}
    If $(X, q)$ is a quasi-metric space in which $f$-convergence\textcolor{white}{a}implies\textcolor{white}{a}$b$-convergence, then every\textcolor{white}{a}$f$-compact subset of $X$ is\textcolor{white}{a}$b$-bounded.
\end{pr}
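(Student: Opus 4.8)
The plan is to reduce the $b$-boundedness of an $f$-compact set to the boundedness of a real-valued continuous function on a compact domain. Fix an arbitrary nonempty $f$-compact subset $A$ of $X$, pick any point $x_0\in X$, and consider the map $g:A\to[0,\infty)$ defined by $g(a)=q(a,x_0)$. By definition $A$ is $b$-bounded exactly when it lies in some backward ball $B_b(x_0,r)=\{y\in X:q(y,x_0)<r\}$, that is, when $\sup_{a\in A}q(a,x_0)<\infty$. Hence it suffices to prove that $g$ is bounded above on $A$.

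First I would establish that $g$ is $f$-continuous as a real-valued function on $(X,\tau_f)$. Since the hypothesis is precisely that $f$-convergence implies $b$-convergence, Proposition \ref{prop:2.1} applies and tells us that $q$ is $f$-continuous; freezing the second coordinate then yields $f$-continuity of the partial map $a\mapsto q(a,x_0)$. To see this directly, suppose $a_n\xrightarrow{f}a$, so that $q(a,a_n)\to 0$; the standing assumption gives $q(a_n,a)\to 0$ as well. Combining the two triangle estimates $q(a_n,x_0)\le q(a_n,a)+q(a,x_0)$ and $q(a,x_0)\le q(a,a_n)+q(a_n,x_0)$ gives $|g(a_n)-g(a)|\le\max\{q(a_n,a),\,q(a,a_n)\}\to 0$, so $g(a_n)\to g(a)$.

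Once $f$-continuity of $g$ is in hand, the conclusion follows from the usual compactness argument: the continuous image $g(A)$ of the $f$-compact set $A$ is a compact, hence bounded, subset of $\mathbb{R}$. Consequently $\sup_{a\in A}q(a,x_0)<\infty$, and choosing any $r$ strictly exceeding this supremum gives $A\subseteq B_b(x_0,r)$, i.e. $A$ is $b$-bounded.

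I expect the only genuine subtlety to be the $f$-continuity of the partial quasi-distance $a\mapsto q(a,x_0)$. Because $q$ is asymmetric, the forward-convergence hypothesis $q(a,a_n)\to 0$ on its own does not control the term $q(a_n,a)$ that appears in the triangle estimate, and it is exactly at this point that the standing assumption ``$f$-convergence implies $b$-convergence'' (equivalently, Proposition \ref{prop:2.1}) becomes indispensable. The remaining steps are routine applications of compactness and the triangle inequality.
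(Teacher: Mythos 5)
The paper never proves this proposition: it is quoted from \cite{16} without proof, so there is no internal argument to compare yours against. Judged on its own merits, your proof is correct, and it uses the hypothesis exactly where it is indispensable — converting $q(a,a_n)\to 0$ into $q(a_n,a)\to 0$ so that both triangle estimates can be combined into $|g(a_n)-g(a)|\le\max\{q(a_n,a),\,q(a,a_n)\}$ — and boundedness of $g$ on $A$ is indeed precisely $b$-boundedness of $A$.

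One step deserves to be made explicit. What you actually establish is \emph{sequential} $f$-continuity of $g$ (this is also how the paper's Definition 2.3 phrases continuity), whereas $f$-compactness of $A$ is open-cover compactness in the topology $\tau_f$; passing from ``sequentially continuous on a compact set'' to ``bounded image'' needs a bridge. The bridge is that $\tau_f$ is first countable: by the triangle inequality the balls $B_f(a,1/n)$, $n\in\mathbb{N}$, form a neighborhood base at $a$, so sequential continuity coincides with topological continuity (equivalently, $f$-compact sets are sequentially compact, and one can run the usual argument: if $g$ were unbounded, pick $a_n\in A$ with $g(a_n)>n$ and extract an $f$-convergent subsequence to reach a contradiction). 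Alternatively, you can bypass sequences-versus-covers altogether: under the standing hypothesis each backward ball $B_b(x_0,n)$ is $\tau_f$-open (if some $y\in B_b(x_0,n)$ had no forward ball inside it, there would be $z_k$ with $q(y,z_k)<1/k$ but $q(z_k,x_0)\ge n$; then $z_k\xrightarrow{f}y$ forces $z_k\xrightarrow{b}y$, and $q(z_k,x_0)\le q(z_k,y)+q(y,x_0)<n$ for large $k$, a contradiction), so $\{B_b(x_0,n)\}_{n\in\mathbb{N}}$ is a $\tau_f$-open cover of $A$ and a finite subcover gives $A\subseteq B_b(x_0,N)$ directly. Either way the missing step is routine to fill; your argument is sound.
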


Combining the above fact with Proposition \ref{prop:2.1}, we can have the\textcolor{white}{a}following\textcolor{white}{a}result.
\begin{pr}\label{prop:4.2}
    If $(X, q)$ is a\textcolor{white}{a}$\delta$-symmetric\textcolor{white}{a}quasi-metric\textcolor{white}{a}space, then every $f$-compact subset of $X$ is $b$-bounded.
\end{pr}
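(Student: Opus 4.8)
The plan is to recognize this as an immediate corollary of Proposition~\ref{prop:4.1}, so the entire task reduces to checking that a $\delta$-symmetric quasi-metric space satisfies the hypothesis of that proposition, namely that $f$-convergence implies $b$-convergence. Once that single implication is in hand, Proposition~\ref{prop:4.1} delivers the conclusion verbatim.

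First I would verify the hypothesis using the defining $\delta$-symmetry inequality. Let $\{x_n\}$ be a sequence in $X$ that $f$-converges to a point $x$, so that $q(x, x_n)\to 0$ as $n\to\infty$. Since $(X,q)$ is $\delta$-symmetric, for every $n$ we have $q(x_n, x)\le \delta\, q(x, x_n)$, and letting $n\to\infty$ forces $q(x_n, x)\to 0$; that is, $\{x_n\}$ $b$-converges to $x$. Hence $f$-convergence implies $b$-convergence in $X$. This is exactly the first half of Remark~\ref{rmk:2.1}, so I would simply cite it rather than repeat the one-line estimate.

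With this implication established, the hypothesis of Proposition~\ref{prop:4.1} is met, and I would invoke it directly: since $f$-convergence implies $b$-convergence in the $\delta$-symmetric quasi-metric space $(X,q)$, every $f$-compact subset of $X$ is $b$-bounded, which is precisely the assertion to be proved.

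I expect no genuine obstacle here, as the statement is a corollary whose only substantive ingredient is the elementary passage from $f$-convergence to $b$-convergence via the constant $\delta$. The heavier content — that the implication $f\text{-convergence}\Rightarrow b\text{-convergence}$ yields $b$-boundedness of $f$-compact sets (which in turn relies on $f$-continuity of $q$ from Proposition~\ref{prop:2.1} together with extracting convergent subsequences from an $f$-compact set) — is already packaged inside Proposition~\ref{prop:4.1} and therefore need not be reproved. Thus the proof is essentially a one-line reduction.
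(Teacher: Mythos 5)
Your proposal is correct and follows exactly the paper's own argument: the paper likewise cites Remark~\ref{rmk:2.1} for the implication that $f$-convergence implies $b$-convergence in a $\delta$-symmetric quasi-metric space and then invokes Proposition~\ref{prop:4.1} to conclude. Your inclusion of the one-line estimate $q(x_n,x)\le \delta\, q(x,x_n)$ merely makes explicit what the cited remark already contains, so nothing differs in substance.
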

\begin{proof}
    By Remark \ref{rmk:2.1}, we have $f$-convergence implies $b$-convergence in $X$. Then the result is immediate from Proposition \ref{prop:4.1}.
\end{proof}
\begin{theorem}\label{thm:4.1}\cite{16}
    If $(X, q)$ is a\textcolor{white}{a}quasi-metric\textcolor{white}{a}space in which a\textcolor{white}{a}sequence is $f$-convergent if and only if it is $b$-convergent, then $(\mathcal{H}_f(X), h_q)$ is a complete metric space.
\end{theorem}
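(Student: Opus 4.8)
The plan is to prove this in two stages: first that $h_q$ is a genuine metric on $\mathcal{H}_f(X)$, and then that the resulting space is complete. For the metric axioms I would begin with finiteness: by Proposition~\ref{prop:4.2} every $f$-compact subset of $X$ is $b$-bounded, so both $A$ and $B$ are $b$-bounded and $Q(A,B)=\sup_{x\in A}\inf_{y\in B}q(x,y)$ is finite (cf. Remark~\ref{rmk:4.1}). Non-negativity is immediate from $q\geq 0$, and symmetry $h_q(A,B)=h_q(B,A)$ is built into the $\max$. For the triangle inequality it suffices to prove the one-sided estimate $Q(A,C)\leq Q(A,B)+Q(B,C)$: for $x\in A$ and $y\in B$ the triangle inequality of $q$ gives $\inf_{z\in C}q(x,z)\leq q(x,y)+\inf_{z\in C}q(y,z)$, and taking $\inf$ over $y\in B$ and then $\sup$ over $x\in A$ yields the claim; applying this to both orderings of the pair gives $h_q(A,C)\leq h_q(A,B)+h_q(B,C)$.

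The remaining metric axiom, identity of indiscernibles, is where the standing hypothesis first becomes essential. Suppose $h_q(A,B)=0$, so in particular $Q(A,B)=0$. Then each $x\in A$ admits a sequence $y_n\in B$ with $q(x,y_n)\to 0$, that is $y_n\xrightarrow{f}x$. Since $B$ is $f$-compact it is sequentially compact, so some subsequence $f$-converges to a point $y^*\in B$; because $f$-convergence and $b$-convergence coincide in $X$, $f$-limits are unique, forcing $x=y^*\in B$. Hence $A\subseteq B$, and by symmetry $A=B$.

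The substantial part is completeness. Given an $h_q$-Cauchy sequence $\{A_n\}$, I would pass to a rapidly converging subsequence with $h_q(A_{n_k},A_{n_{k+1}})<2^{-k}$ and introduce the candidate limit
\[
A=\{x\in X:\ \text{there exist } x_k\in A_{n_k} \text{ with } x_k\xrightarrow{f}x\}.
\]
The program then has three steps: (i) show $A$ is nonempty, by using the tail estimates $Q(A_{n_k},A_{n_{k+1}})<2^{-k}$ to construct a Cauchy selection $x_k\in A_{n_k}$ and taking its $f$-limit; (ii) show $A$ is $f$-compact, by extracting from an arbitrary sequence in $A$ a convergent subsequence through a diagonal argument across the $A_{n_k}$; and (iii) show $A_n\xrightarrow{h_q}A$, by bounding $Q(A_n,A)$ and $Q(A,A_n)$ separately using the geometric tail estimates. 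Throughout, the equivalence of $f$- and $b$-convergence lets me treat $q$ as $f$-continuous (Proposition~\ref{prop:2.1}), which is precisely what permits passing to the limit inside $q(\cdot,\cdot)$ when estimating the quantities $Q$.

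The hard part will be the completeness argument, and specifically the two-sided control of $h_q(A_n,A)$ under the asymmetry of $q$: since $Q(A,B)\neq Q(B,A)$ in general, the bounds on $Q(A_n,A)$ and on $Q(A,A_n)$ must be produced by genuinely different selection arguments, and establishing full $f$-compactness of the limit set $A$ (rather than mere closedness) requires a careful total-boundedness and diagonal extraction. The hypothesis that $f$- and $b$-convergence agree is exactly what reconciles these one-sided quantities, guarantees uniqueness of limits, and keeps the constructed limit set inside $\mathcal{H}_f(X)$.
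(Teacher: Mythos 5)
Your first half --- that $h_q$ is a metric on $\mathcal{H}_f(X)$ --- is essentially correct: finiteness, the one-sided estimate $Q(A,C)\leq Q(A,B)+Q(B,C)$, and in particular the identity-of-indiscernibles argument are all sound. Your use of sequential compactness of an $f$-compact set is legitimate because $\tau_f$ is first countable (the balls $B_f(x,1/n)$ form a neighbourhood base), and your observation that the coincidence of $f$- and $b$-convergence forces uniqueness of limits (via $q(x,y^*)\leq q(x,y_n)+q(y_n,y^*)\to 0$) is exactly the right use of the hypothesis. One small repair: finiteness of $Q(A,B)$ should be drawn from Proposition \ref{prop:4.1}, whose hypothesis ($f$-convergence implies $b$-convergence) is what this theorem assumes; the Proposition \ref{prop:4.2} you cite presupposes $\delta$-symmetry, which is not available here. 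Note also that the paper offers no proof of this theorem at all --- it is imported from \cite{16} --- so there is no internal argument to compare against, only the source.

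The genuine gap is in step (i) of your completeness program, at the words ``taking its $f$-limit'': nothing in the stated hypotheses guarantees that your Cauchy selection $x_k\in A_{n_k}$ has an $f$-limit in $X$, and the same defect recurs in the diagonal extraction of step (ii), where the subsequences produced are again only Cauchy-type. Compactness of the individual sets $A_{n_k}$ does not help, since the selection moves across infinitely many of them. Indeed, the statement as reproduced in the paper is false without a completeness assumption on $X$: take $X=(0,1)$ with the usual metric (a quasi-metric in which $f$- and $b$-convergence trivially coincide) and $A_n=\{1/n\}$. This sequence is $h_q$-Cauchy in $\mathcal{H}_f(X)$, yet any $h_q$-limit $A$ would satisfy $q(a,a')\leq q(a,1/n)+q(1/n,a')\leq 2\,h_q(A,A_n)\to 0$ for $a,a'\in A$, forcing $A=\{x\}$ with $1/n\to x$ in $(0,1)$, which is impossible. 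So a completeness hypothesis on $(X,q)$ (as carried in the source \cite{16}, where the hyperspace result is proved for $f$-complete spaces) is indispensable and has been dropped in this restatement; your proof silently reinstates it precisely where you take the $f$-limit. With $f$-completeness of $X$ made explicit, your outline is the standard Blaschke-type argument and can be carried through; without it, steps (i) and (ii) cannot be repaired.
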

\begin{cor}
    If $(X, q)$ is\textcolor{white}{a}a $\delta$-symmetric\textcolor{white}{a}quasi-metric\textcolor{white}{a}space, then\textcolor{white}{a}$(\mathcal{H}_f(X), h_q)$ is\textcolor{white}{a}a complete metric\textcolor{white}{a}space.
\end{cor}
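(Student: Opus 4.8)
The plan is to invoke Theorem \ref{thm:4.1} directly, after verifying that its single hypothesis is met. Theorem \ref{thm:4.1} guarantees that $(\mathcal{H}_f(X), h_q)$ is a complete metric space whenever the ambient quasi-metric space $(X,q)$ has the property that a sequence is $f$-convergent if and only if it is $b$-convergent. So the whole task reduces to checking that this equivalence of the two convergence notions holds automatically once we assume $\delta$-symmetry.

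First I would recall that this equivalence is precisely the content of Remark \ref{rmk:2.1}: in a $\delta$-symmetric quasi-metric space, for any sequence $\{x_n\}$ and any $x\in X$, the inequalities $q(x_n,x)\leq \delta\, q(x,x_n)$ and $q(x,x_n)\leq \delta\, q(x_n,x)$ force $q(x,x_n)\to 0$ and $q(x_n,x)\to 0$ to hold simultaneously, so $x_n\xrightarrow{f}x$ exactly when $x_n\xrightarrow{b}x$. Thus the hypothesis ``$f$-convergent if and only if $b$-convergent'' is satisfied.

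Having established that, the second step is simply to apply Theorem \ref{thm:4.1} to conclude that $(\mathcal{H}_f(X), h_q)$ is a complete metric space. No further computation is required, since the completeness and the metric-space axioms for $h_q$ are all packaged inside that theorem.

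There is essentially no obstacle here: the corollary is a direct specialization of Theorem \ref{thm:4.1} to the $\delta$-symmetric setting, bridged by the one-line observation of Remark \ref{rmk:2.1}. The only thing to be careful about is citing the right prior result—making sure the convergence equivalence is quoted from Remark \ref{rmk:2.1} rather than re-derived, and that the completeness claim is attributed to Theorem \ref{thm:4.1}. A compact two-sentence proof of the form ``By Remark \ref{rmk:2.1}, $f$-convergence and $b$-convergence coincide in $X$; hence the hypothesis of Theorem \ref{thm:4.1} holds, and $(\mathcal{H}_f(X), h_q)$ is a complete metric space'' is all that is needed.
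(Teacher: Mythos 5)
Your proposal is correct and matches the paper's proof exactly: the paper likewise cites Remark \ref{rmk:2.1} (equivalence of $f$- and $b$-convergence under $\delta$-symmetry) and then applies Theorem \ref{thm:4.1} to conclude completeness of $(\mathcal{H}_f(X), h_q)$. No differences worth noting.
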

\begin{proof}
    The proof follows from Remark \ref{rmk:2.1} and Theorem \ref{thm:4.1}.
\end{proof}
\begin{lem}\label{lem:4.1}\cite{16}
    If\textcolor{white}{a}$(X, q)$ is\textcolor{white}{a}a $\delta$-symmetric\textcolor{white}{a}quasi-metric\textcolor{white}{a}space, then\textcolor{white}{a}the metric $h_q$ on $\mathcal{H}_f(X)$ satisfies the following condition:
    $$h_q\left(\bigcup\limits_{i=1}^nA_i, \bigcup\limits_{i=1}^nB_i\right)\leq\max\limits_{1\leq i\leq n}h_q\left(A_i, B_i\right),$$ where $A_i, B_i\in \mathcal{H}_f(X)$ for $i= 1,2, \dots, n$ and $n\in \mathbb{N}.$
\end{lem}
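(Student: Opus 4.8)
The plan is to reduce everything to the corresponding inequality for the one-sided excess functional $Q$, since $h_q$ is merely the symmetrized maximum $h_q(A,B)=\max\{Q(A,B),Q(B,A)\}$. First I would note that the statement is well-posed: a finite union of $f$-compact sets is $f$-compact (finite unions of compact sets are compact in any topological space), so $\bigcup_{i=1}^n A_i$ and $\bigcup_{i=1}^n B_i$ again belong to $\mathcal{H}_f(X)$, and by Proposition \ref{prop:4.2} they are $b$-bounded; hence $h_q$ is finite on them and the asserted inequality makes sense.

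The heart of the argument is the one-sided claim
\[
Q\left(\bigcup_{i=1}^n A_i,\ \bigcup_{i=1}^n B_i\right)\leq \max_{1\leq i\leq n} Q(A_i, B_i).
\]
To prove it, fix any $x\in\bigcup_{i=1}^n A_i$ and choose an index $j$ with $x\in A_j$. Because $B_j\subseteq\bigcup_{i=1}^n B_i$, enlarging the set over which the infimum is taken can only decrease it, so
\[
\inf_{y\in\bigcup_i B_i} q(x,y)\ \leq\ \inf_{y\in B_j} q(x,y)\ \leq\ \sup_{x'\in A_j}\inf_{y\in B_j} q(x',y)\ =\ Q(A_j, B_j)\ \leq\ \max_{1\leq i\leq n} Q(A_i, B_i).
\]
Since the final bound is independent of $x$, taking the supremum over $x\in\bigcup_i A_i$ yields the claim.

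Finally I would run the identical argument with the roles of the $A_i$ and $B_i$ interchanged to obtain $Q\left(\bigcup_i B_i,\bigcup_i A_i\right)\leq\max_i Q(B_i, A_i)$, and then combine the two one-sided bounds through the definition of $h_q$: since
\[
\max\left\{\max_i Q(A_i,B_i),\ \max_i Q(B_i,A_i)\right\}=\max_{1\leq i\leq n}\max\{Q(A_i,B_i),Q(B_i,A_i)\}=\max_{1\leq i\leq n} h_q(A_i,B_i),
\]
the desired estimate follows. I do not expect a genuine obstacle here: the single inequality for $Q$ is purely order-theoretic and does not even invoke $\delta$-symmetry, which is needed only to guarantee (via Proposition \ref{prop:4.2}) that these quantities are finite and that $h_q$ is a metric. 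The one point requiring mild care is the direction of the monotonicity — passing to the larger infimum set lowers the infimum, while the supremum over the enlarged index set is dominated term-by-term by the maximum over $i$.
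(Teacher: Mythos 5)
Your proof is correct: the one-sided estimate $Q\left(\bigcup_i A_i, \bigcup_i B_i\right)\leq \max_i Q(A_i,B_i)$ obtained by choosing $j$ with $x\in A_j$ and using that the infimum over the enlarged set $\bigcup_i B_i$ is dominated by the infimum over $B_j$, followed by symmetrization through $h_q=\max\{Q(A,B),Q(B,A)\}$, is exactly the standard argument, and your remark that $\delta$-symmetry enters only to guarantee finiteness of $Q$ (via Proposition \ref{prop:4.2}) and the metric structure of $h_q$ is accurate. Note that the paper states this lemma with a citation to \cite{16} and provides no proof of its own, so your self-contained argument does not diverge from the paper --- it supplies the omitted proof in what is essentially the expected form.
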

The metric above $h_q$ on $\mathcal{H}_f(X)$ is called the $f$-Hausdorff-Pompeu metric. Here, the complete metric space $(\mathcal{H}_f(X), h_q)$ is called the fractal space. \\Before going to the application, we will prove a fixed\textcolor{white}{a}point theorem\textcolor{white}{a}for Proinov-type $\mathcal{Z}$-contraction\textcolor{white}{a}in complete\textcolor{white}{a}metric space,\textcolor{white}{a}which will be useful further.
First, we will recall a lemma.
\begin{lem}\label{lem:4.2}\cite{21}
    Let $\{x_n\}$ be\textcolor{white}{a}a sequence\textcolor{white}{a}in a\textcolor{white}{a}metric space\textcolor{white}{a}$(X, d)$ such\textcolor{white}{a}that $\lim\limits_{n\rightarrow\infty}d\left(x_n, x_{n+1}\right)= 0$. If $\{x_n\}$ is\textcolor{white}{a}not Cauchy,\textcolor{white}{a}then one can find an $\epsilon> 0$\textcolor{white}{a}and two\textcolor{white}{a}subsequences $\{x_{n_k}\}$ and\textcolor{white}{a}$\{x_{m_k}\}$ such\textcolor{white}{a}that $$\lim\limits_{k\rightarrow\infty}d\left(x_{n_k}, x_{m_k}\right)= \lim\limits_{k\rightarrow\infty}d\left(x_{n_k+1}, x_{m_k+1}\right)= \epsilon.$$
\end{lem}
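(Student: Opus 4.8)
The plan is to use the classical "minimal-index" extraction argument, which works in any metric space precisely because symmetry is available (this is the metric analogue of Lemma \ref{lem:3.3} and Lemma \ref{lem:3.4}, and is in fact simpler). First I would unpack the hypothesis that $\{x_n\}$ is not Cauchy: by negating the Cauchy condition there exists a single $\epsilon > 0$ such that for every $k \in \mathbb{N}$ one can find indices $m, n$ with $n > m \geq k$ and $d(x_m, x_n) \geq \epsilon$. This supplies the $\epsilon$ in the conclusion.

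Next I would make the choice of $n_k$ canonical so that the limit is pinned to exactly $\epsilon$ rather than merely bounded below by it. For each $k$, fix $m_k \geq k$ for which such an over-threshold partner exists, and let $n_k$ be the \emph{smallest} integer with $n_k > m_k$ and $d(x_{m_k}, x_{n_k}) \geq \epsilon$. Minimality gives $d(x_{m_k}, x_{n_k - 1}) < \epsilon$, so the triangle inequality yields
\begin{equation*}
\epsilon \leq d(x_{m_k}, x_{n_k}) \leq d(x_{m_k}, x_{n_k - 1}) + d(x_{n_k - 1}, x_{n_k}) < \epsilon + d(x_{n_k - 1}, x_{n_k}).
\end{equation*}
Since $d(x_{n_k - 1}, x_{n_k}) \to 0$ by hypothesis, a squeeze gives $\lim_{k \to \infty} d(x_{m_k}, x_{n_k}) = \epsilon$.

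Finally I would transfer this limit to the shifted pair $(x_{m_k + 1}, x_{n_k + 1})$ using the quadrilateral inequality. Applying the triangle inequality twice (and symmetry of $d$) gives
\begin{equation*}
\bigl| d(x_{m_k + 1}, x_{n_k + 1}) - d(x_{m_k}, x_{n_k}) \bigr| \leq d(x_{m_k}, x_{m_k + 1}) + d(x_{n_k}, x_{n_k + 1}),
\end{equation*}
and both terms on the right tend to $0$ because $d(x_n, x_{n+1}) \to 0$. Hence $\lim_{k \to \infty} d(x_{m_k + 1}, x_{n_k + 1}) = \epsilon$ as well, which is the desired conclusion. The only delicate point is the minimal-index selection in the second step: without forcing $n_k$ to be the first index crossing the threshold, one obtains only $\liminf \geq \epsilon$, and the squeeze to the exact value $\epsilon$ fails; everything else is routine manipulation of the triangle inequality.
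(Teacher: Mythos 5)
Your proof is correct and uses the standard minimal-index extraction plus a triangle-inequality squeeze, which is exactly the argument behind this lemma in Proinov \cite{21} (the paper itself gives no proof, only the citation) and is the same device the paper applies in its proof of Lemma \ref{lem:3.4}, there decrementing the first index ($q(x_{m_k-1},x_{n_k})<\epsilon$) rather than the second as you do. The only cosmetic point worth noting is that to get genuine subsequences one should arrange $\{m_k\}$ and $\{n_k\}$ to be strictly increasing (e.g., choose $m_{k+1} > n_k$ inductively), which your construction permits without any change to the estimates; also note your minimality claim $d(x_{m_k}, x_{n_k-1}) < \epsilon$ still holds in the edge case $n_k = m_k + 1$, where the distance is zero.
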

Now, we can prove the fixed point result for Proinov-type $\mathcal{Z}$-contraction.
\begin{lem}\label{lem:4.3}
    Let\textcolor{white}{a}$T: X\rightarrow X$\textcolor{white}{a}be a\textcolor{white}{a}Proinov-type $\mathcal{Z}$-contraction, on\textcolor{white}{a}a metric space\textcolor{white}{a}$(X, d)$,\textcolor{white}{a}with respect to $\xi\in\mathcal{Z}$. If the control functions $\zeta, \eta$ satisfy\textcolor{white}{a}the following\textcolor{white}{a}conditions:
    \begin{enumerate}[label= (\roman*)]
        \item $\zeta$ is non\textcolor{white}{a}decreasing;
        \item $\eta(t)< \zeta(t)$ for\textcolor{white}{a}all $t\in Im(q)\setminus\{0\}$;
        \item if\textcolor{white}{a}$\{x_n\} \text{ and\textcolor{white}{a}} \{y_n\}$ are two sequences\textcolor{white}{a}in\textcolor{white}{a}$(0, \infty)$ such\textcolor{white}{a}that $\lim\limits_{n\rightarrow\infty}x_n= \lim\limits_{n\rightarrow\infty}y_n> 0$ then $\lim\limits_{n\rightarrow\infty}\zeta(x_n)= \lim\limits_{n\rightarrow\infty}\zeta(y_n)> 0$,
    \end{enumerate} then $T$ is asymptotically regular in $X$.
\end{lem}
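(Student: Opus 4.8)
The plan is to replay the argument of Lemma \ref{lem:3.1} in the symmetric setting, where asymptotic regularity is a single notion (the $f$/$b$ distinction disappears because $d$ is symmetric), so only one computation is needed. Recall that here the Proinov-type $\mathcal{Z}$-contraction condition reads $\xi\left(\zeta\left(d\left(Tx,Ty\right)\right),\eta\left(d\left(x,y\right)\right)\right)\geq 0$ for all $x,y\in X$, and asymptotic regularity means $\lim\limits_{n\rightarrow\infty}d\left(T^nx,T^{n+1}x\right)=0$ for every $x\in X$.

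First I would fix $x\in X$ and examine the orbit $\{T^nx\}$. If the orbit is eventually constant, i.e. there is an $N\in\mathbb{N}$ with $T^nx=T^{n+1}x$ for all $n\geq N$, then $d\left(T^nx,T^{n+1}x\right)=0$ eventually and the conclusion is immediate. Hence I may assume $T^nx\neq T^{n+1}x$ for every $n$, so that each distance $d\left(T^nx,T^{n+1}x\right)$ lies in $Im(d)\setminus\{0\}$ and the control functions apply. Applying the contraction inequality to the pair $\left(T^{n-1}x,T^nx\right)$ and then invoking $(z_2)$ gives
\begin{equation*}
0\leq\xi\left(\zeta\left(d\left(T^nx,T^{n+1}x\right)\right),\eta\left(d\left(T^{n-1}x,T^nx\right)\right)\right)\leq\eta\left(d\left(T^{n-1}x,T^nx\right)\right)-\zeta\left(d\left(T^nx,T^{n+1}x\right)\right),
\end{equation*}
so that $\zeta\left(d\left(T^nx,T^{n+1}x\right)\right)\leq\eta\left(d\left(T^{n-1}x,T^nx\right)\right)$. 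Combining this with condition (ii) and the monotonicity (i) of $\zeta$ yields $d\left(T^nx,T^{n+1}x\right)\leq d\left(T^{n-1}x,T^nx\right)$. Thus $\{d\left(T^nx,T^{n+1}x\right)\}$ is nonincreasing and bounded below by $0$, hence converges to some $r\geq 0$.

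The crux is to rule out $r>0$. Assuming $r>0$, condition (iii) (with $x_n=d\left(T^nx,T^{n+1}x\right)$ and $y_n=d\left(T^{n-1}x,T^nx\right)$, both of which tend to $r>0$) gives $\lim\limits_{n\rightarrow\infty}\zeta\left(d\left(T^nx,T^{n+1}x\right)\right)=\lim\limits_{n\rightarrow\infty}\zeta\left(d\left(T^{n-1}x,T^nx\right)\right)>0$. Squeezing $\eta\left(d\left(T^{n-1}x,T^nx\right)\right)$ between $\zeta\left(d\left(T^nx,T^{n+1}x\right)\right)$ and $\zeta\left(d\left(T^{n-1}x,T^nx\right)\right)$, both of which share this common positive limit, forces $\eta\left(d\left(T^{n-1}x,T^nx\right)\right)$ to converge to the same positive value. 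Setting $s_n=\zeta\left(d\left(T^nx,T^{n+1}x\right)\right)$ and $t_n=\eta\left(d\left(T^{n-1}x,T^nx\right)\right)$, both sequences lie in $(0,\infty)$ and have a common strictly positive limit, so condition $(z_3)$ of the simulation function yields $\limsup\limits_{n\rightarrow\infty}\xi\left(s_n,t_n\right)<0$, contradicting $\xi\left(s_n,t_n\right)\geq 0$. Therefore $r=0$, which is precisely asymptotic regularity.

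I expect the only delicate point to be verifying that the two inner sequences feeding into $(z_3)$ genuinely share a common strictly positive limit; this is exactly where hypothesis (iii) enters, and it is what replaces the continuity/limit assumptions on $\zeta$ and $\eta$ in Proinov's original formulation. Everything else is a routine transcription of the proof of Lemma \ref{lem:3.1}, simplified by symmetry of $d$.
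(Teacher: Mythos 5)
Your proposal is correct and coincides with the paper's own proof: the paper disposes of this lemma by stating that it mimics the first part ($f$-asymptotic regularity) of Lemma \ref{lem:3.1}, which is precisely the transcription you carried out, including the same use of hypothesis (iii) to squeeze $\eta\left(d\left(T^{n-1}x,T^nx\right)\right)$ toward the common positive limit before invoking $(z_3)$. Your additional remark that symmetry of $d$ collapses the $f$/$b$ distinction into a single computation is exactly why only the first part of Lemma \ref{lem:3.1} is needed here.
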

\begin{proof}
    The proof mimics the proof of the first part($f$-asymptotic regularity) of Lemma \ref{lem:3.1}
\end{proof}
\begin{theorem}\label{thm:4.2}
     Let\textcolor{white}{a}$T: X\rightarrow X$\textcolor{white}{a}be a\textcolor{white}{a}Proinov-type $\mathcal{Z}$-contraction, on a\textcolor{white}{a}metric space\textcolor{white}{a}$(X, d)$,\textcolor{white}{a}with respect to $\xi\in\mathcal{Z}$.. If the control functions $\zeta, \eta$ follow the conditions:
    \begin{enumerate}[label= (\roman*)]
        \item $\zeta$ is nondecreasing;
        \item $\eta(t)< \zeta(t)$ for\textcolor{white}{a}all $t\in Im(q)\setminus\{0\}$;
        \item if\textcolor{white}{a}$\{x_n\} \text{ and\textcolor{white}{a}} \{y_n\}$ are two sequences\textcolor{white}{a}in $(0, \infty)$ such that $\lim\limits_{n\rightarrow\infty}x_n= \lim\limits_{n\rightarrow\infty}y_n> 0$ then $\lim\limits_{n\rightarrow\infty}\zeta(x_n)= \lim\limits_{n\rightarrow\infty}\zeta(y_n)> 0$,
    \end{enumerate} then $T$\textcolor{white}{a}has a\textcolor{white}{a}unique fixed\textcolor{white}{a}point in\textcolor{white}{a}$X$, say $w$. Moreover, the\textcolor{white}{a}sequence $\{T^nx\}$ converges\textcolor{white}{a}to $w$ for any $x\in X$.
\end{theorem}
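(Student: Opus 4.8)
The plan is to follow the template of the proof of Theorem~\ref{thm:3.1} almost verbatim, exploiting the fact that a metric space is precisely a $\delta$-symmetric quasi-metric space with $\delta=1$ in which the forward and backward notions collapse into one. Consequently, Lemma~\ref{lem:4.2} will play the role that Lemma~\ref{lem:3.3} played there, and Lemma~\ref{lem:4.3} supplies the asymptotic regularity that Lemma~\ref{lem:3.1} supplied. I note at the outset that completeness of $(X,d)$ must be assumed for the fixed point to exist; this is harmless for the intended application, since the fractal space $(\mathcal{H}_f(X),h_q)$ is complete by Theorem~\ref{thm:4.1}.

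First I would fix $x\in X$, set $x_n=T^nx$, and invoke Lemma~\ref{lem:4.3} to obtain $\lim_{n\to\infty}d(x_n,x_{n+1})=0$. To show $\{x_n\}$ is Cauchy I argue by contradiction: if it is not, Lemma~\ref{lem:4.2} furnishes an $\epsilon>0$ and subsequences $\{x_{m_k}\},\{x_{n_k}\}$ with $\lim_{k}d(x_{m_k},x_{n_k})=\lim_{k}d(x_{m_k+1},x_{n_k+1})=\epsilon$. Hypothesis (iii) then yields $\lim_{k}\zeta(d(x_{m_k},x_{n_k}))=\lim_{k}\zeta(d(x_{m_k+1},x_{n_k+1}))>0$, and combining the contraction condition with $(z_2)$ and (ii) gives
\[
0\le\xi\bigl(\zeta(d(x_{m_k+1},x_{n_k+1})),\,\eta(d(x_{m_k},x_{n_k}))\bigr)<\zeta(d(x_{m_k},x_{n_k}))-\zeta(d(x_{m_k+1},x_{n_k+1})).
\]
Letting $k\to\infty$ forces $\lim_{k}\eta(d(x_{m_k},x_{n_k}))=\lim_{k}\zeta(d(x_{m_k+1},x_{n_k+1}))>0$, so applying $(z_3)$ to the sequences $s_k=\zeta(d(x_{m_k+1},x_{n_k+1}))$ and $t_k=\eta(d(x_{m_k},x_{n_k}))$ produces $\limsup_{k}\xi(s_k,t_k)<0$, a contradiction. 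Hence $\{x_n\}$ is Cauchy and, by completeness, $f$-converges to some $w\in X$.

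Next I would verify that $w$ is fixed: from $0\le\xi(\zeta(d(Tw,T^nx)),\eta(d(w,T^{n-1}x)))$ together with (ii) and the monotonicity (i) one deduces $d(Tw,T^nx)<d(w,T^{n-1}x)\to0$, whence $Tw=\lim_n T^nx=w$. Uniqueness then follows by feeding two fixed points $w,w'$ into the contraction inequality, obtaining $0<\zeta(d(w,w'))-\zeta(d(w,w'))=0$, which is absurd unless $w=w'$. The only genuine subtlety — and the step I expect to be the main obstacle — is the Cauchy argument, specifically the correct passage to the limit in the displayed chain and the precise identification of the two sequences to which $(z_3)$ is applied; once the symmetry of $d$ removes the bookkeeping of forward/backward variants, every remaining step is routine. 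I would also recommend explicitly adding completeness of $(X,d)$ to the hypotheses.
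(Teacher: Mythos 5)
Your proposal is correct and follows essentially the same route as the paper's own proof: asymptotic regularity via Lemma~\ref{lem:4.3}, the Cauchy argument by contradiction through Lemma~\ref{lem:4.2} with condition $(iii)$ and $(z_3)$ applied to exactly the sequences $s_k=\zeta\left(d\left(x_{m_k+1},x_{n_k+1}\right)\right)$ and $t_k=\eta\left(d\left(x_{m_k},x_{n_k}\right)\right)$, and then the same fixed-point and uniqueness steps borrowed from Theorem~\ref{thm:3.1}. Your observation that completeness of $(X,d)$ must be added to the hypotheses is also accurate, since the paper's proof invokes it even though the statement omits it.
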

\begin{proof}
    Let\textcolor{white}{a}$x\in X$. Then, according to Lemma \ref{lem:4.3}, $T$ is asymptotically regular. Then the sequence $\{d\left(T^nx, T^{n+1}x\right)\}$ converges to zero. Let us denote $x_n= T^nx$ for all $n\in \mathbb{N}$. We\textcolor{white}{a}claim that $\{x_n\}$ is Cauchy. If not, by Lemma \ref{lem:4.2}, there\textcolor{white}{a}exist an\textcolor{white}{a}$\epsilon>0$ and\textcolor{white}{a}two subsequences\textcolor{white}{a}$\{x_{n_k}\}$ and\textcolor{white}{a}$\{x_{m_k}\}$ of\textcolor{white}{a}$\{x_n\}$ such\textcolor{white}{a}that $\lim\limits_{k\rightarrow\infty}d\left(x_{n_k}, x_{m_k}\right)= \lim\limits_{k\rightarrow\infty}d\left(x_{n_k+1}, x_{m_k+1}\right)= \epsilon.$ Then by condition $(iii)$ in the hypothesis, we get $$\lim\limits_{k\rightarrow\infty}\zeta\left(d\left(x_{n_k}, x_{m_k}\right)\right)= \lim\limits_{k\rightarrow\infty}\zeta\left(d\left(x_{n_k+1}, x_{m_k+1}\right)\right)>0.$$
   From the contraction condition of $T$, we get
   \begin{equation*}
       \begin{split}
           0 &\leq \xi\left(\zeta\left(d\left(x_{n_k+1}, x_{m_k+1}\right)\right), \eta\left(d\left(x_{n_k}, x_{m_k}\right)\right)\right)\\ &< \eta\left(d\left(x_{n_k}, x_{m_k}\right)\right)- \zeta\left(d\left(x_{n_k+1}, x_{m_k+1}\right)\right)\\&< \zeta\left(d\left(x_{n_k}, x_{m_k}\right)\right)- \zeta\left(d\left(x_{n_k+1}, x_{m_k+1}\right)\right).
       \end{split}
   \end{equation*}
   Taking the limit $k\rightarrow\infty$ in the above inequality, by condition $(iii)$ in the hypothesis, we get $$\lim\limits_{k\rightarrow\infty}\left(\zeta\left(d\left(x_{n_k}, x_{m_k}\right)\right)- \zeta\left(d\left(x_{n_k+1}, x_{m_k+1}\right)\right)\right)= 0.$$ This implies, $\lim\limits_{k\rightarrow\infty}\eta\left(d\left(x_{n_k}, x_{m_k}\right)\right)= \lim\limits_{k\rightarrow\infty}\zeta\left(d\left(x_{n_k+1}, x_{m_k+1}\right)\right)> 0.$ Hence from the condition $(z_3)$ of\textcolor{white}{a}simulation\textcolor{white}{a}functions,\textcolor{white}{a}we get $$\limsup\limits_{k\rightarrow\infty}\xi\left(\zeta\left(d\left(x_{n_k+1}, x_{m_k+1}\right)\right), \eta\left(d\left(x_{n_k}, x_{m_k}\right)\right)\right)< 0,$$ which contradicts the condition of $\mathcal{Z}$-contraction. Thus $\{x_n\}$\textcolor{white}{a}is a\textcolor{white}{a}Cauchy sequence. Since\textcolor{white}{a}$X$ is\textcolor{white}{a}a complete\textcolor{white}{a}metric space, the\textcolor{white}{a}sequence $\{x_n\}$ will converge in $X$, say to $w\in X$. \\ The\textcolor{white}{a}remaining part\textcolor{white}{a}of the\textcolor{white}{a}proof is\textcolor{white}{a}similar to the proof of\textcolor{white}{a}Theorem \ref{thm:3.1}.
\end{proof}
Let\textcolor{white}{a}$(X, q)$ be\textcolor{white}{a}a $\delta$-symmetric quasi-metric\textcolor{white}{a}space and\textcolor{white}{a}$T: X\rightarrow X$ be\textcolor{white}{a}an $f$-Proinov-type $\mathcal{Z}$-contraction. Define a map $\hat{T}: \mathcal{H}_f(X)\rightarrow \mathcal{P}(X)$\textcolor{white}{a}such that\textcolor{white}{a}$\hat{T}(A)= T(A)= \{T(x): x\in A\}$ for\textcolor{white}{a}$A\in\mathcal{H}_f(X)$. Since $T$ is $ff$-continuous, $T(A)$ will be in $\mathcal{H}_f(X)$. Thus, $\hat{T}$ is a self-mapping of $\mathcal{H}_f(X)$. \\ Next Lemma will prove the map $\hat{T}$ is a Proinov-type $\mathcal{Z}$-contraction on $\mathcal{H}_f(X)$.
\begin{lem}\label{lem:4.4}
    Let\textcolor{white}{a}$(X, q)$ b\textcolor{white}{a} a $\delta$-symmetric quasi-metric\textcolor{white}{a}space and\textcolor{white}{a}$T: X\rightarrow X$ be\textcolor{white}{a}an $f$-Proinov-type $\mathcal{Z}$-contraction\textcolor{white}{a}with respect\textcolor{white}{a}to $\xi\in\mathcal{Z}$, where the simulation function $\xi(s, t)$ decreases on the first variable and increases on the second variable. Suppose that the control functions $\zeta \text{ and } \eta$ are nondecreasing. Then the map $\hat{T}: \mathcal{H}_f(X)\rightarrow\mathcal{H}_f(X)$ defined as $\hat{T}(A)= T(A)$ for $A\in\mathcal{H}_f(X)$ is\textcolor{white}{a}a Proinov-type $\mathcal{Z}$-contraction\textcolor{white}{a}on the\textcolor{white}{a}complete metric\textcolor{white}{a}space\textcolor{white}{a}$(\mathcal{H}_f(X), h_q)$ with\textcolor{white}{a}respect to $\xi\in\mathcal{Z}$.
\end{lem}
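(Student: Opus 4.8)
The plan is to verify the defining inequality of a Proinov-type $\mathcal{Z}$-contraction for $\hat{T}$ on the complete metric space $(\mathcal{H}_f(X),h_q)$, namely
\[
\xi\left(\zeta\left(h_q(\hat{T}A,\hat{T}B)\right),\eta\left(h_q(A,B)\right)\right)\ge 0\quad\text{for all }A,B\in\mathcal{H}_f(X),
\]
by collapsing this set-level inequality to a single point-level instance of (\ref{eq:1}). If $A=B$ then $\hat{T}A=\hat{T}B$ and the left-hand side is $\xi(0,0)=0$, so I assume $A\neq B$, whence $h_q(A,B)>0$. Since $h_q(A,B)=h_q(B,A)$, I may assume without loss of generality that $h_q(\hat{T}A,\hat{T}B)=Q(T(A),T(B))=\sup_{x\in A}\inf_{y\in B}q(Tx,Ty)$, the other case being obtained by interchanging $A$ and $B$.

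First I would secure the extremal points. By Proposition \ref{prop:3.1} the map $T$ is $ff$-continuous (here $\zeta$ nondecreasing is used), so $T(B)$ is $f$-compact. I claim the forward distance $G(u)=\inf_{v\in T(B)}q(u,v)$ is $f$-continuous: for each $u$ the infimum is attained because $q(u,\cdot)$ is $f$-continuous (Remark \ref{rmk:2.1}) and $T(B)$ is $f$-compact, while for $u_n\xrightarrow{f}u$ the triangle inequality together with $\delta$-symmetry (which makes both $q(u,u_n)\to 0$ and $q(u_n,u)\to 0$) gives $G(u_n)\to G(u)$. Hence $x\mapsto\inf_{y\in B}q(Tx,Ty)=G(Tx)$ is $f$-continuous, and as $A$ is $f$-compact its supremum is attained at some $x_0\in A$. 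Using $f$-compactness of $B$ once more, I pick $y_0\in B$ attaining $\inf_{y\in B}q(x_0,y)$.

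With these points the two required comparisons are: (a) $h_q(\hat{T}A,\hat{T}B)=\inf_{y\in B}q(Tx_0,Ty)\le q(Tx_0,Ty_0)$, immediate since $y_0\in B$; and (b) $q(x_0,y_0)=\inf_{y\in B}q(x_0,y)\le\sup_{x\in A}\inf_{y\in B}q(x,y)=Q(A,B)\le h_q(A,B)$. The same attainment argument applied to $Q(A,B)$ and $Q(B,A)$ shows $h_q(A,B)\in\mathrm{Im}(q)$, so $\eta<\zeta$ continues to hold on $\mathrm{Im}(h_q)\setminus\{0\}$, as the definition requires. Now the monotonicity hypotheses do the work. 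Since $\zeta$ is nondecreasing, (a) gives $\zeta(h_q(\hat{T}A,\hat{T}B))\le\zeta(q(Tx_0,Ty_0))$, and as $\xi$ decreases in its first argument,
\[
\xi\left(\zeta\left(h_q(\hat{T}A,\hat{T}B)\right),\eta\left(h_q(A,B)\right)\right)\ge\xi\left(\zeta\left(q(Tx_0,Ty_0)\right),\eta\left(h_q(A,B)\right)\right).
\]
Since $\eta$ is nondecreasing, (b) gives $\eta(q(x_0,y_0))\le\eta(h_q(A,B))$, and as $\xi$ increases in its second argument,
\[
\xi\left(\zeta\left(q(Tx_0,Ty_0)\right),\eta\left(h_q(A,B)\right)\right)\ge\xi\left(\zeta\left(q(Tx_0,Ty_0)\right),\eta\left(q(x_0,y_0)\right)\right)\ge 0,
\]
the last step being exactly (\ref{eq:1}) for the pair $(x_0,y_0)$. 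Chaining the two displays yields the claim.

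The main obstacle is the compatible choice of extremal points in this monotone reduction: the single maximizer $x_0$ must simultaneously realize the supremum defining $h_q(\hat{T}A,\hat{T}B)$ and serve as the anchor whose nearest point $y_0$ in $B$ bounds the original distance, and the two monotonicity directions of $\xi$ (decreasing in the contracted coordinate, increasing in the source coordinate) must be aligned with the monotonicity of $\zeta$ and $\eta$ so that both comparisons raise the value of $\xi$ toward the nonnegative point-level value. The subsidiary technical step is the $f$-continuity of the forward distance function $G$, which is precisely where $\delta$-symmetry is indispensable, since without it one could not pass freely between forward and backward convergence to attain the outer supremum.
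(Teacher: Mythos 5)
Your proof is correct and takes essentially the same route as the paper's: select a point $\alpha = x_0 \in A$ attaining the supremum defining $Q\left(\hat{T}(A), \hat{T}(B)\right)$ and a nearest point $\beta = y_0 \in B$, then chain the monotonicity of $\zeta$ and $\eta$ with the monotonicity of $\xi$ in each variable back to the pointwise contraction condition (\ref{eq:1}). If anything, your write-up is slightly tighter than the paper's: you place the without-loss-of-generality reduction on $h_q\left(\hat{T}(A), \hat{T}(B)\right)$ and then use $Q(A,B)\leq h_q(A,B)$, which cleanly covers the case the paper glosses over (its WLOG fixes $h_q(A,B)=Q(A,B)$ but then tacitly takes $h_q\left(\hat{T}(A),\hat{T}(B)\right)=Q\left(\hat{T}(A),\hat{T}(B)\right)$), and you spell out the attainment of the extrema via $f$-continuity of the distance-to-set function, which the paper compresses into the phrase ``since $q$ and $T$ are continuous.''
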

\begin{proof}
    Let $A, B\in\mathcal{H}_f(X)$. Then $h_q(A, B)= \max\{Q(A, B), Q(B, A)\}$.\\ Without loss of generality, let $h_q(A, B)= Q(A, B)$. Since $q\text{ and } T$ are continuous and $f$-convergence implies $b$-convergence, there exists $\alpha\in A$ such that 
    \begin{equation*}
        \begin{split}
            \zeta\left(h_q\left(\hat{T}(A), \hat{T}(B)\right)\right)&= \zeta\left(Q\left(\hat{T}(A), \hat{T}(B)\right)\right)\\&= \zeta\left(\inf\limits_{y\in B}q\left(T(\alpha), T(y)\right)\right)\\&\leq \zeta\left(q\left(T(\alpha), T(y)\right)\right),
        \end{split}
    \end{equation*}for any $y\in B$. On the other hand, let\textcolor{white}{a}$\beta\in B$\textcolor{white}{a}be such\textcolor{white}{a}that $q(\alpha, \beta)= \inf\limits_{y\in B}q(\alpha, y)$. Since $\eta$ is increasing, we\textcolor{white}{a}get
    \begin{equation*}
        \begin{split}
            \eta\left(q(\alpha, \beta)\right)&= \eta\left(\inf\limits_{y\in B}q(\alpha, y)\right)\\&\leq \eta\left(\sup\limits_{x\in A}\inf\limits_{y\in B}q(x, y)\right)\\&=\eta\left(Q(A, B)\right)\\&\leq \eta\left(h_q(A, B)\right).
        \end{split}
    \end{equation*}
    That is, we have $\zeta\left(h_q\left(\hat{T}(A), \hat{T}(B)\right)\right)\leq \zeta\left(q\left(T(\alpha), T(\beta)\right)\right) \text{ and }\eta\left(q(\alpha, \beta)\right)\leq \eta\left(h_q\left(A, B\right)\right).$\\ Since the simulation function $\xi$ is decreasing on the first variable and increasing on the second variable, we get
    \begin{equation*}
        \begin{split}
            0&\leq\xi\left(\zeta\left(q\left(T(\alpha), T(\beta)\right)\right), \eta\left(q(\alpha, \beta)\right)\right)\\&\leq \xi\left(\zeta\left(h_q\left(\hat{T}(A), \hat{T}(B)\right)\right), \eta\left(h_q\left(A, B\right)\right)\right).
        \end{split}
    \end{equation*}This implies, $\hat{T}$ is a Proinov-type $\mathcal{Z}$-contraction on $\mathcal{H}_f(X)$.
\end{proof}
\begin{theorem}\label{thm:4.3}
    Let\textcolor{white}{a}$(X, q)$\textcolor{white}{a}be a\textcolor{white}{a}$\delta$-symmetric quasi-metric\textcolor{white}{a}space and\textcolor{white}{a}$T: X\rightarrow X$\textcolor{white}{a}be an\textcolor{white}{a}$f$-Proinov-type $\mathcal{Z}$-contraction\textcolor{white}{a}with respect\textcolor{white}{a}to $\xi\in\mathcal{Z}$. Suppose\textcolor{white}{a}that the\textcolor{white}{a}following conditions\textcolor{white}{a}hold:
    \begin{enumerate}[label=(\roman*)]
        \item $\xi(s, t)$ decreases in\textcolor{white}{a}the first\textcolor{white}{a}variable and increases in\textcolor{white}{a}the second\textcolor{white}{a}variable;
        \item $\zeta, \eta$ are nondecreasing;
        \item $\eta(t)< \zeta(t)$ for\textcolor{white}{a}all $t\in Im(q)\setminus\{0\}$;
        \item if $\{x_n\} \text{ and } \{y_n\}$ are\textcolor{white}{a}two sequences\textcolor{white}{a}in\textcolor{white}{a}$(0, \infty)$ such\textcolor{white}{a}that $\lim\limits_{n\rightarrow\infty}x_n= \lim\limits_{n\rightarrow\infty}y_n> 0$ then $\lim\limits_{n\rightarrow\infty}\zeta(x_n)= \lim\limits_{n\rightarrow\infty}\zeta(y_n)> 0$.
    \end{enumerate}
    Then there exists a unique attractor, say $A^*$ in $\mathcal{H}_f(X)$, for $T$. Moreover the sequence $A_n= T^n(A)$ converges to $A^*$ for any $A\in\mathcal{H}_f(X)$.  
\end{theorem}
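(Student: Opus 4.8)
The plan is to reduce the statement to the fixed-point theorem already proved for Proinov-type $\mathcal{Z}$-contractions on complete metric spaces, namely Theorem \ref{thm:4.2}, applied to the induced set map $\hat{T}$ on the fractal space $(\mathcal{H}_f(X), h_q)$. First I would record the ambient structure: since $(X,q)$ is $\delta$-symmetric, Remark \ref{rmk:2.1} gives that $f$-convergence and $b$-convergence coincide, so by the corollary to Theorem \ref{thm:4.1} the pair $(\mathcal{H}_f(X), h_q)$ is a complete metric space. Moreover, because $T$ is $ff$-continuous by Proposition \ref{prop:3.1}, the image $T(A)$ of an $f$-compact set is again $f$-compact, so $\hat{T}(A) = T(A)$ indeed defines a self-map of $\mathcal{H}_f(X)$, as noted in the paragraph preceding Lemma \ref{lem:4.4}.

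Next I would invoke Lemma \ref{lem:4.4}. Conditions (i) and (ii) of the present theorem are exactly its hypotheses ($\xi$ decreasing in the first variable and increasing in the second, and $\zeta, \eta$ nondecreasing), so Lemma \ref{lem:4.4} yields that $\hat{T}$ is a Proinov-type $\mathcal{Z}$-contraction on $(\mathcal{H}_f(X), h_q)$ with respect to the same $\xi$ and the same control functions $\zeta, \eta$. It then remains only to check that $\zeta, \eta$ satisfy the hypotheses of Theorem \ref{thm:4.2} relative to the metric $h_q$. The nondecreasing-ness of $\zeta$ and the limit condition (iv) are intrinsic conditions on $\zeta$ and on sequences in $(0,\infty)$, hence independent of the underlying metric and thus immediately available. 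The only point requiring care is the separation condition $\eta(t) < \zeta(t)$: condition (iii) supplies it for $t \in Im(q)\setminus\{0\}$, and the construction of $h_q$ in the proof of Lemma \ref{lem:4.4} exhibits every value $h_q(A,B)$ as $q(\alpha,\beta)$ for suitable $\alpha \in A$, $\beta \in B$ (the relevant infima and suprema being attained on $f$-compact sets), so that $Im(h_q) \subseteq Im(q)$ and the inequality transfers to the domain needed for $\hat{T}$.

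With these verifications in place, Theorem \ref{thm:4.2} applies verbatim to $\hat{T}$ on the complete metric space $(\mathcal{H}_f(X), h_q)$: there is a unique $A^* \in \mathcal{H}_f(X)$ with $\hat{T}(A^*) = A^*$, and the iterates $\hat{T}^n(A)$ converge to $A^*$ in $h_q$ for every $A \in \mathcal{H}_f(X)$. Since $\hat{T}^n(A) = T^n(A) = A_n$ by definition, this $A^*$ is precisely the unique attractor of $T$ and $A_n \to A^*$, which completes the argument. I expect the only genuinely delicate step to be the domain bookkeeping above, namely confirming $Im(h_q) \subseteq Im(q)$ so that condition (iii) certifies $\hat{T}$ as a Proinov-type $\mathcal{Z}$-contraction in the exact sense demanded by Theorem \ref{thm:4.2}; every other step is a direct citation of the preceding results.
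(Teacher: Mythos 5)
Your proposal is correct and takes essentially the same approach as the paper: the paper's proof is precisely the two-step citation you give, namely Lemma \ref{lem:4.4} to show that $\hat{T}$ is a Proinov-type $\mathcal{Z}$-contraction on the complete metric space $(\mathcal{H}_f(X), h_q)$, followed by Theorem \ref{thm:4.2}. Your extra verifications --- the completeness of $(\mathcal{H}_f(X), h_q)$, the well-definedness of $\hat{T}$ via $ff$-continuity, and the transfer of condition (iii) through $Im(h_q)\subseteq Im(q)$ --- are points the paper leaves implicit, so your write-up is if anything more careful than the published one.
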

\begin{proof}
    By Lemma \ref{lem:4.4}, it\textcolor{white}{a}is clear\textcolor{white}{a}that $\hat{T}$\textcolor{white}{a}is a Proinov-type $\mathcal{Z}$-contraction in\textcolor{white}{a}the complete metric space $\mathcal{H}_f(X)$. Then the result follows from Theorem \ref{thm:4.2}.
\end{proof}
\subsection{Iterated Function System consisting of Proinov-type \texorpdfstring{$\mathcal{Z}$}{Z}-contractions}
Now we will consider an iterated function system (IFS) 
$\{X; w_1, w_2,\dots, w_N\}$ where $N\in\mathbb{N}$ and each $w_i$ is an $f$-Proinov-type $\mathcal{Z}$-contraction. We define a function $W: \mathcal{H}_f(X)\rightarrow\mathcal{H}_f(X)$ by $W(A)= \bigcup\limits_{i=1}^Nw_i(A)$ for any $A\in \mathcal{H}_f(X)$. This map $W$ is called the fractal operator generated by the IFS. A set $A\in\mathcal{H}_f(X)$ that is a fixed point of $W$, that is, $W(A)= \bigcup\limits_{i=1}^Nw_i(A)= A$, is called an attractor of the IFS $\{X; w_1, w_2,\dots, w_N\}$.
The next lemma will show that the fractal operator $W$ defined above is a Proinov-type $\mathcal{Z}$-contraction in $\mathcal{H}_f(X)$. 
\begin{lem}\label{lem:4.5}
    Let\textcolor{white}{a}$(X, q)$ be\textcolor{white}{a}a $\delta$-symmetric\textcolor{white}{a}quasi-metric\textcolor{white}{a}space and\textcolor{white}{a}$w_i: X\rightarrow X$, $i=1, 2,\dots, N$ where $N\in\mathbb{N}$, be $f$-Proinov-type $\mathcal{Z}$-contractions with respect to a simulation function $\xi$ where $\xi(s, t)$ is decreasing on the first variable. If the control functions $\zeta\text{ and } \eta$ are nondecreasing, then the fractal operator $W$, generated by the IFS $\{X; w_1, w_2, \dots, w_N\}$, is a Proinov-type $\mathcal{Z}$-contraction in $\mathcal{H}_f(X)$.  
\end{lem}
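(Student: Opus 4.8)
The plan is to reduce the statement for the fractal operator $W$ to the single-map result already in hand, namely Lemma \ref{lem:4.4}, and then glue the $N$ pieces together using the union estimate of Lemma \ref{lem:4.1}. First I would check that $W$ is genuinely a self-map of $\mathcal{H}_f(X)$: since each $w_i$ is an $f$-Proinov-type $\mathcal{Z}$-contraction with $\zeta$ nondecreasing, Proposition \ref{prop:3.3} makes each $w_i$ $ff$-continuous, so $w_i(A)\in\mathcal{H}_f(X)$; as a finite union of $f$-compact sets is $f$-compact, we get $W(A)=\bigcup_{i=1}^N w_i(A)\in\mathcal{H}_f(X)$. With this settled, the goal is to verify $\xi\bigl(\zeta(h_q(W(A),W(B))),\eta(h_q(A,B))\bigr)\geq 0$ for all $A,B\in\mathcal{H}_f(X)$.

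The core is a short chain of monotone comparisons. By Lemma \ref{lem:4.4} applied to each $w_i$, the map $A\mapsto w_i(A)$ is a Proinov-type $\mathcal{Z}$-contraction on $(\mathcal{H}_f(X),h_q)$, that is, $\xi\bigl(\zeta(h_q(w_i(A),w_i(B))),\eta(h_q(A,B))\bigr)\geq 0$ for every $i$ and all $A,B$. Lemma \ref{lem:4.1} gives $h_q(W(A),W(B))\leq\max_{1\leq i\leq N}h_q(w_i(A),w_i(B))$, and since the index set is finite this maximum is attained at some $j$. Because $\zeta$ is nondecreasing it commutes with the maximum, so in particular $\zeta(h_q(W(A),W(B)))\leq\zeta(h_q(w_j(A),w_j(B)))$. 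Finally, using that $\xi$ is decreasing in its first slot, a smaller first argument can only raise the value of $\xi$, whence
\[
\xi\bigl(\zeta(h_q(W(A),W(B))),\eta(h_q(A,B))\bigr)\;\geq\;\xi\bigl(\zeta(h_q(w_j(A),w_j(B))),\eta(h_q(A,B))\bigr)\;\geq\;0,
\]
the last inequality being Lemma \ref{lem:4.4} for the index $j$. This is exactly the contraction condition for $W$, so $W$ is a Proinov-type $\mathcal{Z}$-contraction on $\mathcal{H}_f(X)$.

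The step I expect to be the main obstacle is lining up the directions of all the inequalities so that the final lower bound survives. Lemma \ref{lem:4.1} and the monotonicity of $\zeta$ only control the first argument of $\xi$, and they do so in the ``wrong'' direction, producing an \emph{upper} bound on $h_q(W(A),W(B))$; it is precisely the hypothesis that $\xi$ is decreasing in its first variable that converts this upper bound into the desired lower bound on $\xi$. One must also keep in mind that invoking Lemma \ref{lem:4.4} silently carries its own monotonicity requirement on the second slot of $\xi$, so the cleanest route is to hold the second argument fixed at $\eta(h_q(A,B))$ throughout and manipulate only the first argument, exactly as above; this confines the gluing stage to the first-variable monotonicity and avoids reopening the second-slot comparison already handled inside Lemma \ref{lem:4.4}.
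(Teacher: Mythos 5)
Your proposal is correct and follows essentially the same route as the paper's own proof: apply Lemma \ref{lem:4.4} to each $w_i$, bound $h_q(W(A),W(B))$ by $h_q(w_j(A),w_j(B))$ via Lemma \ref{lem:4.1} with the maximum attained at some index $j$, and convert that upper bound into the required lower bound on $\xi$ using the monotonicity of $\zeta$ and the first-variable monotonicity of $\xi$. Your added observations (verifying that $W$ is a self-map of $\mathcal{H}_f(X)$, and flagging that Lemma \ref{lem:4.4} tacitly needs $\xi$ increasing in its second variable, a hypothesis Lemma \ref{lem:4.5} does not state) are both apt and, if anything, make the argument more careful than the paper's.
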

\begin{proof}
    Define $W:\mathcal{H}_f(X)\rightarrow \mathcal{H}_f(X)$ by $W(A)= \bigcup\limits_{i=1}^Nw_i(A)$ for any $A\in\mathcal{H}_f(X)$. Since each $w_i$ is an $f$-Proinov-type $\mathcal{Z}$-contraction, by Lemma \ref{lem:4.4} $\hat{w_i}$ is a Proinov-type $\mathcal{Z}$-contraction in $\mathcal{H}_f(X)$. Hence $\xi\left(\zeta\left(h_q\left(\hat{w_i}(A), \hat{w_i}(B)\right)\right), \eta\left(h_q\left(A, B\right)\right)\right)\geq 0$. By Lemma \ref{lem:4.1} we have
    \begin{equation*}
        \begin{split}
            h_q\left(W(A), W(B)\right)&= h_q\left(\bigcup\limits_{i=1}^Nw_i(A), \bigcup\limits_{i=1}^Nw_i(B)\right)\\&\leq\max\limits_{1\leq i\leq N}h_q\left(w_i(A), w_i(B)\right)\\&= h_q\left(w_j(A), w_j(B)\right)\\&= h_q\left(\hat{w_j}(A), \hat{w_j}(B)\right),
        \end{split}
    \end{equation*}for some $j\in \{1, 2, \dots, N\}$. Since $\xi(s, t)$ is decreasing on $s$, we get
    \begin{equation*}
        \begin{split}
            0&\leq \xi\left(\zeta\left(h_q\left(\hat{w_j}(A), \hat{w_j}(B)\right)\right), \eta\left(h_q\left(A, B\right)\right)\right)\\&\leq \xi\left(\zeta\left(h_q\left(W(A), W(B)\right)\right), \eta\left(h_q\left(A, B\right)\right)\right).
        \end{split}
    \end{equation*}Hence the fractal operator $W$ is a Proinov-type $\mathcal{Z}$-contraction.
\end{proof}
The existence and uniqueness of an attractor for an IFS consisting of $f$-Proinov-type $\mathcal{Z}$-contractions are proved in\textcolor{white}{a}the next\textcolor{white}{a}Theorem.
\begin{theorem}\label{thm:4.4}
      Let\textcolor{white}{a}$(X, q)$ be\textcolor{white}{a}a $\delta$-symmetric quasi-metric\textcolor{white}{a}space and $w_i: X\rightarrow X$, $i=1, 2,\dots, N$ where $N\in\mathbb{N}$, be $f$-Proinov-type $\mathcal{Z}$-contractions with respect to a simulation function $\xi$ and  control functions $\zeta\text{ and } \eta$. Suppose\textcolor{white}{a}that the\textcolor{white}{a}following conditions\textcolor{white}{a}hold:
     \begin{enumerate}[label=(\roman*)]
        \item $\xi(s, t)$ is\textcolor{white}{a}decreasing in\textcolor{white}{a}the first\textcolor{white}{a}variable;
        \item $\zeta, \eta$ are nondecreasing;
        \item $\eta(t)< \zeta(t)$ for all $t\in Im(q)\setminus\{0\}$;
        \item if $\{x_n\} \text{ and\textcolor{white}{a}} \{y_n\}$ are\textcolor{white}{a}two sequences\textcolor{white}{a}in $(0, \infty)$ such that $\lim\limits_{n\rightarrow\infty}x_n= \lim\limits_{n\rightarrow\infty}y_n> 0$ then $\lim\limits_{n\rightarrow\infty}\zeta(x_n)= \lim\limits_{n\rightarrow\infty}\zeta(y_n)> 0$.
    \end{enumerate}Then there exists a unique attractor, say $A^*\in\mathcal{H}_f(X)$, for the fractal operator $W$, generated by the IFS $\{X; w_1, w_2, \dots, w_N\}$. Moreover, the iterated sequence $\{W^n(A)\}$ converges to the attractor $A^*$ for any $A\in \mathcal{H}_f(X)$.
\end{theorem}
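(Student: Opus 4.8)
The plan is to realize Theorem~\ref{thm:4.4} as a direct consequence of the fixed point theorem already established for Proinov-type $\mathcal{Z}$-contractions on complete metric spaces, namely Theorem~\ref{thm:4.2}, applied to the fractal space $(\mathcal{H}_f(X), h_q)$. The whole argument reduces to checking that the fractal operator $W$ is a well-defined self-map of $\mathcal{H}_f(X)$ and that it satisfies the hypotheses of that theorem.

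First I would set up the ambient space. Since $(X,q)$ is $\delta$-symmetric, Remark~\ref{rmk:2.1} gives that $f$-convergence and $b$-convergence coincide, so Theorem~\ref{thm:4.1} yields that $(\mathcal{H}_f(X), h_q)$ is a complete metric space. Next, because each $w_i$ is an $f$-Proinov-type $\mathcal{Z}$-contraction with $\zeta$ nondecreasing, Proposition~\ref{prop:3.1} makes each $w_i$ both $ff$- and $bb$-continuous, so $w_i(A)$ is $f$-compact whenever $A$ is; a finite union of $f$-compact sets is $f$-compact, and hence $W(A)=\bigcup_{i=1}^{N} w_i(A)$ is a genuine self-map of $\mathcal{H}_f(X)$.

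Then I would invoke Lemma~\ref{lem:4.5}: under condition~(i), that $\xi$ is decreasing in its first variable, together with condition~(ii), that $\zeta,\eta$ are nondecreasing, the fractal operator $W$ is a Proinov-type $\mathcal{Z}$-contraction on $(\mathcal{H}_f(X), h_q)$ with respect to $\xi$ and the control functions $\zeta,\eta$. At this stage the remaining hypotheses of Theorem~\ref{thm:4.2} are precisely conditions (ii)--(iv) of the present statement: $\zeta$ is nondecreasing, $\eta(t)<\zeta(t)$ on $Im(q)\setminus\{0\}$, and the sequential condition on $\zeta$ holds. Applying Theorem~\ref{thm:4.2} to $W$ on the complete metric space $(\mathcal{H}_f(X), h_q)$ therefore produces a unique fixed point $A^* \in \mathcal{H}_f(X)$ together with the convergence $W^n(A)\to A^*$ for every $A\in\mathcal{H}_f(X)$.

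Finally I would translate this back into the language of the IFS: a fixed point of $W$ is by definition a set satisfying $\bigcup_{i=1}^{N} w_i(A^*)=W(A^*)=A^*$, which is exactly an attractor of $\{X; w_1,\dots,w_N\}$; uniqueness of the fixed point gives uniqueness of the attractor, and $W^n(A)\to A^*$ is the asserted convergence of the iterates. There is no substantive obstacle, since the proof only chains together prior results; the single point deserving a moment's care is matching the domain on which the control functions are evaluated, namely that the arguments $h_q(A,B)$ fed into $\zeta$ and $\eta$ lie in the set on which conditions (iii)--(iv) are imposed, which is immediate because those conditions are phrased for the values of the metric $h_q$ on the fractal space.
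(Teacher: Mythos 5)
Your proposal is correct and follows essentially the same route as the paper's own proof: invoke Lemma~\ref{lem:4.5} to conclude that the fractal operator $W$ is a Proinov-type $\mathcal{Z}$-contraction on the complete metric space $(\mathcal{H}_f(X), h_q)$, and then apply Theorem~\ref{thm:4.2} to obtain the unique fixed point (attractor) and the convergence of $\{W^n(A)\}$. The extra details you verify explicitly---completeness of the fractal space via Remark~\ref{rmk:2.1} and Theorem~\ref{thm:4.1}, and well-definedness of $W$ as a self-map of $\mathcal{H}_f(X)$ via $ff$-continuity---are treated by the paper as standing context established earlier in Section~4 rather than re-derived inside this proof.
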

\begin{proof}
    From Lemma \ref{lem:4.5}, it is clear that the fractal operator $W$ generated by the given IFS is a Proinov-type $\mathcal{Z}$-contraction in the complete metric space $\mathcal{H}_f(X)$. Then the result follows from Theorem \ref{thm:4.2}.
\end{proof}
Next, we will generalize Theorem \ref{thm:4.4}. For that, we will consider an IFS consisting of $f$-Proinov-type $\mathcal{Z}$-contractions each having different simulation functions and control functions. That is, we will take $w_i$ to be $f$-Proinov-type $\mathcal{Z}$-contraction with respect to $\xi_i\in\mathcal{Z}$ and control functions $\zeta\text{ and }\eta_i$ for each $i= 1, 2,\dots,N$.\\ Before moving to the main results, we will prove the following Lemma about simulation functions.
\begin{lem}\label{4.6}
    Let $\xi_i$, for\textcolor{white}{a}$i=1, 2, \dots, N$\textcolor{white}{a}where\textcolor{white}{a}$N\in\mathbb{N}$, be a\textcolor{white}{a}finite collection of simulation\textcolor{white}{a}functions. Define $\xi(s, t)= \max\limits_{1\leq i\leq N}\xi_i(s, t)$. Then the function $\xi$ is also a simulation function.
\end{lem}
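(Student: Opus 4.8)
The plan is to verify, one at a time, the three defining properties $(z_1)$--$(z_3)$ of a simulation function from Definition \ref{defn:1.2} for the function $\xi(s,t)=\max_{1\le i\le N}\xi_i(s,t)$, exploiting throughout that the index set $\{1,\dots,N\}$ is finite.

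For $(z_1)$, since each $\xi_i$ is a simulation function we have $\xi_i(0,0)=0$ for every $i$, so $\xi(0,0)=\max_{1\le i\le N}\xi_i(0,0)=0$. For $(z_2)$, fix $s,t>0$; by $(z_2)$ applied to each $\xi_i$ we have $\xi_i(s,t)<t-s$ for all $i$, and since the maximum over a finite family is attained at some index, $\xi(s,t)=\max_{1\le i\le N}\xi_i(s,t)<t-s$. Both of these are immediate and require only the finiteness of the collection.

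The substantive step is $(z_3)$. Let $\{s_n\},\{t_n\}\subset(0,\infty)$ satisfy $\lim_{n\to\infty}s_n=\lim_{n\to\infty}t_n>0$; applying $(z_3)$ to each $\xi_i$ gives $\limsup_{n\to\infty}\xi_i(s_n,t_n)<0$. I would then establish, for an arbitrary finite family of real sequences $\{a_n^{(i)}\}_{i=1}^N$, the identity
\[
\limsup_{n\to\infty}\max_{1\le i\le N}a_n^{(i)}=\max_{1\le i\le N}\limsup_{n\to\infty}a_n^{(i)}.
\]
The inequality $\ge$ is automatic since $\max_i a_n^{(i)}\ge a_n^{(j)}$ for each fixed $j$; for $\le$, writing $M=\max_i\limsup_n a_n^{(i)}$ and fixing $\varepsilon>0$, for each $i$ there is an index $N_i$ with $a_n^{(i)}<M+\varepsilon$ for $n\ge N_i$, and setting $N_0=\max_i N_i$ yields $\max_i a_n^{(i)}<M+\varepsilon$ for $n\ge N_0$, whence $\limsup_n\max_i a_n^{(i)}\le M+\varepsilon$; letting $\varepsilon\to 0$ gives the claim. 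Applying this identity with $a_n^{(i)}=\xi_i(s_n,t_n)$ yields $\limsup_{n\to\infty}\xi(s_n,t_n)=\max_{1\le i\le N}\limsup_{n\to\infty}\xi_i(s_n,t_n)<0$, the final strict inequality holding because it is a maximum of finitely many strictly negative quantities.

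The only genuine obstacle is the interchange of $\limsup$ and $\max$ in $(z_3)$, and the point worth stressing is that finiteness of $N$ is indispensable: it is precisely what permits the choice $N_0=\max_i N_i$ and the conclusion that a maximum of finitely many negative numbers is itself negative. The argument would collapse for an infinite family, so the hypothesis $N\in\mathbb{N}$ is used essentially rather than cosmetically.
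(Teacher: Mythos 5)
Your proof is correct, but it reaches $(z_3)$ by a genuinely different route than the paper. The paper proceeds by induction on $N$: for the base case $N=2$ it sets $a_n=\xi_1(s_n,t_n)$, $b_n=\xi_2(s_n,t_n)$, $c_n=\max\{a_n,b_n\}$, extracts a subsequence $\{c_{n_k}\}$ converging to $c=\limsup_n c_n$, and runs a three-case analysis (eventually $c_{n_k}=a_{n_k}$, eventually $c_{n_k}=b_{n_k}$, or both occur infinitely often) to conclude $c\le\max\{\limsup_n a_n,\limsup_n b_n\}$; the inductive step then peels off $\xi_{N+1}$ and applies the two-sequence inequality together with the induction hypothesis. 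You instead prove the full interchange identity $\limsup_n\max_i a_n^{(i)}=\max_i\limsup_n a_n^{(i)}$ for any finite family in one stroke via an $\varepsilon$-argument, taking $N_0=\max_i N_i$ so that all $N$ tail bounds hold simultaneously. Your version is shorter and cleaner: it avoids both the subsequence extraction and the induction, and it delivers equality rather than only the one-sided inequality $\le$ that the argument actually needs (the paper's case analysis, note, only ever establishes $\le$ as well). The paper's approach is more pedestrian but makes visible, via its Case 3, exactly how the maximum can oscillate between the two sequences, which your uniform tail bound handles silently. Both arguments use finiteness of $N$ essentially — yours in the step $N_0=\max_i N_i$ and in concluding that a maximum of finitely many negative quantities is negative, the paper's in terminating the induction — and your closing remark correctly identifies this as the load-bearing hypothesis. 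One small point of care, which does not affect correctness: if some $\limsup_n a_n^{(i)}=-\infty$ your $\varepsilon$-argument should be read with $M$ replaced by an arbitrary real bound, but since here $\xi_i(s_n,t_n)<t_n-s_n$ with $\{t_n\}$ convergent, all the relevant sequences are bounded above and the degenerate case is harmless.
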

\begin{proof}
    From the definition of $\xi(s, t)$, it is clear that $\xi$ is a map from $[0, \infty)\times [0, \infty)$ to $\mathbb{R}$. Since $\xi_i(0, 0)= 0$ for all $i$, we get $\xi(0, 0)= 0$. We have $\xi_i(s, t)< t-s$ for all $s,t>0$ and $i= 1, 2, \dots,N$. Thus, it is clear that $\xi(s, t)= \max\limits_{1\leq i\leq N}\xi_i(s, t)< t-s$ for all $s,t> 0$. So, $\xi$ satisfies the properties $(z_1) \text{ and }(z_2)$ of the simulation function. Now we have to prove the property $(z_3)$. Let $\{s_n\}, \{t_n\}$ be two sequences in $(0, \infty)$ such that $\lim\limits_{n\rightarrow\infty}s_n= \lim\limits_{n\rightarrow\infty}t_n> 0$. Then we have $\limsup\limits_{n\rightarrow\infty}\xi_i(s_n, t_n)< 0$. We claim $\limsup\limits_{n\rightarrow\infty}\xi(s_n, t_n)< 0$. We will prove this by mathematical induction\textcolor{white}{a}on\textcolor{white}{a}$N$. The\textcolor{white}{a}case $N= 1$ is\textcolor{white}{a}trivial.
\textcolor{white}{a}For $N= 2$, let $\xi(s, t)= \max\{\xi_1(s, t), \xi_2(s, t)\}$. Let $a_n= \xi_1(s_n, t_n)$,\   $b_n= \xi_2(s_n, t_n)$ and $c_n= \xi(s_n, t_n)$. Then we have three real sequences $\{a_n\}, \{b_n\}$ and $\{c_n\}$ such that $c_n= \max\limits_{n\in\mathbb{N}}\{a_n, b_n\}$. Let $c= \limsup\limits_{n\rightarrow\infty}c_n$. Then there exists a subsequence $\{c_{n_k}\}$ of $\{c_n\}$ such that $\lim\limits_{k\rightarrow\infty}c_{n_k}= c$. We have three possibilities for $c_{n_k}$:\\
    Case 1: There\textcolor{white}{a}exists $K\in\mathbb{N}$\textcolor{white}{a}such that\textcolor{white}{a}$c_{n_k}= a_{n_k}$ for\textcolor{white}{a}each\textcolor{white}{a}$k\geq K$. Then we get $\lim\limits_{k\rightarrow\infty}a_{n_k}= c.$ This implies $c\leq \limsup\limits_{n\rightarrow\infty}a_n$.\\ Case 2: There exist $K\in\mathbb{N}$ such that $c_{n_k}= b_{n_k}$ for each $k\geq K$. Then, by an argument similar to that in Case 1, we get $c\leq \limsup\limits_{n\rightarrow\infty}b_n$.\\
    Case 3: For each $i\in\mathbb{N}$ there exist $n_{k_i}, n_{l_i}> i$ such that $c_{n_{k_i}}= a_{n_{k_i}}$ and $c_{n_{l_i}}= b_{n_{l_i}}$. That is, there exist two subsequences $\{c_{n_{k_i}}\}$ and $\{c_{n_{l_i}}\}$ of $\{c_{n_k}\}$ such that $c_{n_{k_i}}= a_{n_{k_i}}$ and $c_{n_{l_i}}= b_{n_{l_i}}$. Thus, $\lim\limits_{i\rightarrow\infty}a_{n_{k_i}}= \lim\limits_{i\rightarrow\infty}b_{n_{k
    l_i}}= c$. Hence $c\leq \limsup\limits_{n\rightarrow\infty}a_n$ and $c\leq \limsup\limits_{n\rightarrow\infty}b_n$. \\ In each case we get $c\leq\max\{\limsup\limits_{n\rightarrow\infty}a_n, \limsup\limits_{n\rightarrow\infty}b_n\}$. \\Now, suppose the result is true for $N$. Suppose that $\xi(s, t)= \max\limits_{1\leq i\leq N+1}\xi_i(s, t)$. Then,
    \begin{align*}
    \limsup\limits_{n\rightarrow\infty}\xi(s_n, t_n)&=\limsup\limits_{n\rightarrow\infty}\max\limits_{1\leq i\leq N+1}\xi_i(s_n, t_n)\\&= \limsup\limits_{n\rightarrow\infty}\left(\max\left\{\max\limits_{1\leq i\leq N}\xi_i(s_n, t_n), \xi_{N+1}(s_n, t_n)\right\}\right)\\&\leq \max\left\{\limsup\limits_{n\rightarrow\infty}\max\limits_{1\leq i\leq N}\xi_i(s_n, t_n), \limsup\limits_{n\rightarrow\infty}\xi_{N+1}(s_n, t_n)\right\}\\&\leq \max\left\{\max\limits_{1\leq i\leq N}\limsup\limits_{n\rightarrow\infty}\xi_i(s_n, t_n), \limsup\limits_{n\rightarrow\infty}\xi_{N+1}(s_n, t_n) \right\}\\&= \max\limits_{1\leq i \leq N+1}\limsup\limits_{n\rightarrow\infty}\xi_i(s_n, t_n)\\&<0.
    \end{align*}
    Hence the result is true for any $N\in\mathbb{N}$. Thus, $\xi$ satisfies property $(z_3)$. Therefore, it is a simulation function.
\end{proof}
Next, we will prove a lemma that will generalize the fractal operator given in Lemma \ref{lem:4.4}. 
\begin{lem}\label{lem:4.7}
    Let $\{X; w_1, w_2, \dots,w_N\}$ be an IFS where each $w_i$ is a $f$-Proinov-type $\mathcal{Z}$-contractions with respect to the simulation function $\xi_i$ and control functions $\zeta$ and $\eta_i$. That is, each $w_i$ satisfies\textcolor{white}{a}the contraction\textcolor{white}{a}condition:$$0\leq\xi_i\left(\zeta\left(q\left(w_i(x), w_i(y)\right)\right), \eta_i\left(q\left(x, y\right)\right)\right).$$ Suppose that each simulation function $\xi_i$ decreases on the first variable and increases on the second variable. Also, let the control functions $\zeta\text{ and }\eta_i$ not decrease for $i= 1, 2, \dots, N$. Then the fractal operator $W$ generated by the IFS $\{X; w_1, w_2,\dots,w_N\}$ is a Proinov-type $\mathcal{Z}$-contraction in $\mathcal{H}_f(X)$ with respect to the simulation function $\xi(s, t)=\max\limits_{1\leq i\leq N}\xi_i(s, t)$ and control functions $\zeta, \eta$ where $\eta(t)= \max\limits_{1\leq i\leq N}\eta_i(t)$.
\end{lem}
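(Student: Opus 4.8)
The plan is to reduce everything to the single-map lemma already established and then exploit the monotonicity hypotheses to collapse the $N$ separate simulation and control functions into the single pair $\xi(s,t)=\max_{1\leq i\leq N}\xi_i(s,t)$ and $\eta(t)=\max_{1\leq i\leq N}\eta_i(t)$. First I would record the preliminary facts that make the conclusion even meaningful: by Lemma \ref{4.6} the function $\xi$ is again a simulation function, $\eta$ is nondecreasing as a finite maximum of nondecreasing functions, and since each $\eta_i(t)<\zeta(t)$ on $Im(q)\setminus\{0\}$, the finite maximum satisfies $\eta(t)<\zeta(t)$ there as well. Thus $(\xi,\zeta,\eta)$ is an admissible triple for a Proinov-type $\mathcal{Z}$-contraction on the complete metric space $(\mathcal{H}_f(X),h_q)$.

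Next I would lift each $w_i$ to the fractal space. Applying Lemma \ref{lem:4.4} to $w_i$ (with simulation function $\xi_i$ and control functions $\zeta,\eta_i$) shows that each $\hat{w_i}$ is a Proinov-type $\mathcal{Z}$-contraction on $\mathcal{H}_f(X)$, that is, $0\leq\xi_i(\zeta(h_q(\hat{w_i}(A),\hat{w_i}(B))),\eta_i(h_q(A,B)))$ for all $A,B\in\mathcal{H}_f(X)$. Fixing $A,B$ and arguing exactly as in Lemma \ref{lem:4.5} via Lemma \ref{lem:4.1}, there is an index $j=j(A,B)$ with $h_q(W(A),W(B))\leq\max_{1\leq i\leq N}h_q(w_i(A),w_i(B))=h_q(\hat{w_j}(A),\hat{w_j}(B))$.

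The crux is then a short monotonicity chain starting from $0\leq\xi_j(\zeta(h_q(\hat{w_j}(A),\hat{w_j}(B))),\eta_j(h_q(A,B)))$. Because $\zeta$ is nondecreasing, $\zeta(h_q(W(A),W(B)))\leq\zeta(h_q(\hat{w_j}(A),\hat{w_j}(B)))$, and since $\xi_j$ decreases in its first argument this replacement can only increase the value, giving $0\leq\xi_j(\zeta(h_q(W(A),W(B))),\eta_j(h_q(A,B)))$. Since $\eta_j(h_q(A,B))\leq\eta(h_q(A,B))$ and $\xi_j$ increases in its second argument, I may replace $\eta_j$ by $\eta$ without decreasing the value; finally $\xi_j\leq\xi$ pointwise yields $0\leq\xi(\zeta(h_q(W(A),W(B))),\eta(h_q(A,B)))$, which is precisely the contraction condition for $W$. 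The only subtle point is that the index $j$ depends on the chosen pair $A,B$; this is harmless exactly because the three monotonicity steps let me pass from the $j$-dependent inequality to one phrased entirely in the global functions $\xi$ and $\eta$, and it is in these three steps that all of the standing hypotheses — each $\xi_i$ decreasing in the first and increasing in the second variable, and $\zeta,\eta_i$ nondecreasing — are consumed.
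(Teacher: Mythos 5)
Your proposal is correct and follows essentially the same route as the paper's proof: lift each $w_i$ to $\hat{w_i}$ via Lemma \ref{lem:4.4}, use Lemma \ref{lem:4.1} (as in Lemma \ref{lem:4.5}) to bound $h_q\left(W(A), W(B)\right)$ by $h_q\left(\hat{w_j}(A), \hat{w_j}(B)\right)$ for some pair-dependent index $j$, and then apply exactly the paper's three-step monotonicity chain ($\xi_j$ decreasing in the first argument with $\zeta$ nondecreasing, $\xi_j$ increasing in the second argument with $\eta_j\leq\eta$, and $\xi_j\leq\xi$ pointwise). Your explicit admissibility check that $\xi$ is a simulation function (Lemma \ref{4.6}) and that $\eta$ is nondecreasing with $\eta(t)<\zeta(t)$ is a welcome addition that the paper leaves implicit, but it does not change the argument.
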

\begin{proof}
    Define $W:\mathcal{H}_f(X)\rightarrow\mathcal{H}_f(X)$ by $W(A)= \bigcup\limits_{i=1}^Nw_i(A)$ for $A\in \mathcal{H}_f(X)$. Let $A, B \in \mathcal{H}_f(X)$. For each $i=1, 2,\dots, N$, we have $0\leq \xi_i\left(\zeta\left(q\left(w_i(x), w_i(y)\right)\right), \eta_i\left(q\left(x, y\right)\right)\right)$ for any $x, y\in X$. By Lemma \ref{lem:4.4} we get $0\leq \xi_i\left(\zeta\left(h_q\left(\hat{w_i}(A), \hat{w_i(B)}\right)\right), \eta_i\left(h_q\left(A, B\right)\right)\right).$ From the proof of Lemma \ref{lem:4.5}, we have $h_q\left(W(A), W(B)\right)\leq h_q\left(\hat{w_j}(A), \hat{w_j}(B)\right)$ for some $j\in\{1,2,\dots, N\}$. Since each $\xi_i$ decreases in the first variable and increases in the second variable, the function $\xi(s, t)= \max\limits_{1\leq i\leq N}\xi_i(s, t)$ also decreases in\textcolor{white}{a}the first\textcolor{white}{a}variable and\textcolor{white}{a}increases in\textcolor{white}{a}the second variable. Then,
    \begin{align*}
        0&\leq\xi_j\left(\zeta\left(h_q\left(\hat{w_j}(A), \hat{w_j}(B)\right)\right), \eta_j\left(h_q\left(A, B\right)\right)\right)\\&\leq\xi_j\left(\zeta\left(h_q\left(W(A), W(B)\right)\right), \eta_j\left(h_q\left(A, B\right)\right)\right)\\&\leq\xi_j\left(\zeta\left(h_q\left(W(A), W(B)\right)\right), \eta\left(h_q\left(A, B\right)\right)\right)\\&\leq\xi\left(\zeta\left(h_q\left(W(A), W(B)\right)\right), \eta\left(h_q\left(A, B\right)\right)\right).
    \end{align*}Thus the fractal operator $W$ generated by the IFS is a Proinov-type $\mathcal{Z}$-contraction in $\mathcal{H}_f(X)$.
\end{proof}
The next Theorem will prove the existence and uniqueness of attractor for this generalized IFS of $f$-Proinov-type $\mathcal{Z}$-contractions. 
\begin{theorem}\label{thm:4.5}
    Let $\{X; w_1, w_2, \dots,w_N\}$ be an IFS where each $w_i$ is a $f$-Proinov-type $\mathcal{Z}$-contractions with respect to the simulation function $\xi_i$ and control functions $\zeta$ and $\eta_i$. That is, each $w_i$ satisfies\textcolor{white}{a}the contraction\textcolor{white}{a}condition:$$0\leq\xi_i\left(\zeta\left(q\left(w_i(x), w_i(y)\right)\right), \eta_i\left(q\left(x, y\right)\right)\right).$$ Suppose that the following conditions hold:
    \begin{enumerate}[label=(\roman*)]
        \item Each $\xi_i(s, t)$ decreases in\textcolor{white}{a}the first\textcolor{white}{a}variable and\textcolor{white}{a}increases in\textcolor{white}{a}the second variable\textcolor{white}{a}for $i=1, 2,\dots, N$;
        \item $\zeta, \eta_i$ are nondecreasing for each $i=1,2,\dots,N$;
        \item $\eta_i(t)< \zeta(t)$ for all $t\in Im(q)\setminus\{0\}$ and $i=1, 2,\dots, N$;
        \item if $\{x_n\} \text{ and } \{y_n\}$ are two sequences in $(0, \infty)$ such that $\lim\limits_{n\rightarrow\infty}x_n= \lim\limits_{n\rightarrow\infty}y_n> 0$ then $\lim\limits_{n\rightarrow\infty}\zeta(x_n)= \lim\limits_{n\rightarrow\infty}\zeta(y_n)> 0$.
    \end{enumerate}Then the fractal operator, $W$ generated by the IFS $\{X; w_1, w_2,\dots, w_N\},$ has a unique attractor, say $A^*\in\mathcal{H}_f(X)$. Moreover, the iterated sequence $\{W^n(A)\}$ converges to the attractor $A^*$ for any $A\in\mathcal{H}_f(X)$. 
\end{theorem}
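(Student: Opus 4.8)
The plan is to recognize that this theorem is essentially a packaging of the two preceding lemmas together with the metric fixed point theorem, Theorem \ref{thm:4.2}. First I would set $\xi(s,t)=\max_{1\le i\le N}\xi_i(s,t)$ and $\eta(t)=\max_{1\le i\le N}\eta_i(t)$, exactly the aggregated data appearing in Lemma \ref{lem:4.7}. The whole argument then reduces to showing that the fractal operator $W$ is a single Proinov-type $\mathcal{Z}$-contraction on the complete metric space $(\mathcal{H}_f(X),h_q)$ for which the hypotheses of Theorem \ref{thm:4.2} are satisfied.

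Next I would check that the standing hypotheses feed correctly into the lemmas. Condition (i) says each $\xi_i$ decreases in the first and increases in the second variable, and condition (ii) says $\zeta$ and each $\eta_i$ are nondecreasing; these are precisely the hypotheses of Lemma \ref{lem:4.7}, so that lemma applies and guarantees $0\le \xi\big(\zeta(h_q(W(A),W(B))),\eta(h_q(A,B))\big)$ for all $A,B\in\mathcal{H}_f(X)$, that is, $W$ is a Proinov-type $\mathcal{Z}$-contraction with respect to $\xi$ and control functions $\zeta,\eta$. By Lemma \ref{4.6}, the aggregated map $\xi=\max_i\xi_i$ is again a genuine simulation function, so this contraction statement is meaningful, and the space $(\mathcal{H}_f(X),h_q)$ is complete by the Corollary following Theorem \ref{thm:4.1}.

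It then remains to verify conditions (i)--(iii) of Theorem \ref{thm:4.2} for the triple $(\xi,\zeta,\eta)$ on $(\mathcal{H}_f(X),h_q)$. Nondecreasingness of $\zeta$ is hypothesis (ii), and the limiting condition of Theorem \ref{thm:4.2} involves $\zeta$ alone, so it is inherited verbatim from hypothesis (iv). The only point needing care is the strict inequality $\eta(t)<\zeta(t)$: here I would use that the maximum is over the finite index set $\{1,\dots,N\}$, so $\eta(t)=\max_i\eta_i(t)=\eta_{i_0}(t)$ for some $i_0$, whence $\eta(t)=\eta_{i_0}(t)<\zeta(t)$ by hypothesis (iii), for every $t\in Im(h_q)\setminus\{0\}$. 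With all three conditions confirmed, Theorem \ref{thm:4.2} yields a unique fixed point $A^*$ of $W$ in $\mathcal{H}_f(X)$ with $W^n(A)\to A^*$ for every $A$; since a fixed point of the fractal operator is by definition an attractor of the IFS, this is exactly the claimed conclusion.

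The main obstacle is conceptual rather than computational: one must confirm that replacing the individual simulation and control functions by their pointwise maxima preserves all the structural requirements simultaneously --- the simulation-function axioms (handled by Lemma \ref{4.6}), the contraction inequality for $W$ (handled by Lemma \ref{lem:4.7}), and the control-function hypotheses of Theorem \ref{thm:4.2}. The finiteness of $N$ is precisely what makes the strict inequality $\max_i\eta_i(t)<\zeta(t)$ survive the aggregation, and it is worth noting that the limit hypothesis never needs to be verified for $\eta$, since only $\zeta$ enters that condition of Theorem \ref{thm:4.2}.
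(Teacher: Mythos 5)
Your proposal is correct and follows essentially the same route as the paper's own (two-line) proof: invoke Lemma \ref{lem:4.7} to conclude that the fractal operator $W$ is a Proinov-type $\mathcal{Z}$-contraction on the complete metric space $(\mathcal{H}_f(X), h_q)$, and then apply Theorem \ref{thm:4.2}. The paper leaves the verification of Theorem \ref{thm:4.2}'s hypotheses for the aggregated data implicit, whereas you spell it out (Lemma \ref{4.6} for $\xi=\max_i\xi_i$, completeness of $(\mathcal{H}_f(X),h_q)$, and the finiteness of $N$ ensuring $\max_i\eta_i(t)<\zeta(t)$), which only adds rigor to the same argument.
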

\begin{proof}
    It follows from Lemma \ref{lem:4.7} that the fractal operator $W$ is a Proinov-type $\mathcal{Z}$-contraction in the complete metric space $\mathcal{H}_f(X)$. Then the result follows immediately from Theorem \ref{thm:4.2}.
\end{proof}
The\textcolor{white}{a}following example\textcolor{white}{a}will illustrate\textcolor{white}{a}Theorem \ref{thm:4.5}.
\begin{eg}
    Let\textcolor{white}{a}$X=[0, 1]$. Define $q:[0, 1]\times[0, 1]\rightarrow \mathbb{R}$ as 
    \[q(x, y)= \begin{cases}
        8x & if x>y\\
        4y & if x<y\\
        0 & if x=y.
    \end{cases}\] 
    It can be easily verified that $q$ is a $2$-symmetric quasi-metric on $[0, 1]$. Now define $\zeta, \eta:[0, \infty)\rightarrow \mathbb{R}$ as $\zeta(t)= t$ and $\eta(t)=t^2$. Both $\zeta \text{ and }\eta$ satisfy the conditions $(ii)- (iv)$ of the hypothesis. Consider two simulation functions $\xi_1$ and $\xi_2$ defined as $\xi_1(s, t)=\frac{t}{t+1}-s$ and $\xi_2(s, t)= \frac{16t}{t+16}-s$. Clearly, both $\xi_1$ and $\xi_2$ satisfies condition $(i)$ in the hypothesis. Define $w_1, w_2:[0, 1]\rightarrow [0, 1]$ by $w_1(x, y)=\frac{x^3}{66x^2+3}$ and $w_2(x)=\frac{4x^2}{4x^2+1}$. We will prove that both $w_1$ and $w_2$ are $f$-Proinov-type $\mathcal{Z}$-contractions with respect to the simulation functions $\xi_1$ and $\xi_2$ respectively.\\
    First, we consider the function $w_1$ and the simulation function $\xi_1$.\\
    \textbf{Case 1:} If $x>y$, then $q(x, y)=8x$ and $q(w_1(x),w_2(x))=\frac{8x^3}{66x^2+3}$. Then the
\begin{equation*}
\xi_1\left(\zeta\left(q\left(w_1(x),w_1(y)\right)\right), \eta\left(q\left(x,y\right)\right)\right)=\frac{64x^2}{64x^2+1}-\frac{8x^3}{66x^2+5}
\geq\frac{64x^2}{64x^2+1}-\frac{8x^3}{64x^2+1}
\geq 0
\end{equation*}
\textbf{Case 2:} If $x<y$, then $q(x,y)= 4y$ and $q(w_1(x),w_1(y))=\frac{4y^3}{66y^2+3}$. Then,
\begin{equation*}
    \xi_1\left(\zeta\left(q\left(w_1(x),w_1(y)\right)\right),\eta\left(q\left(x,y\right)\right)\right)=\frac{16y^2}{16y^2+1}-\frac{4y^3}{66y^2+3}\geq\frac{16y^2}{16y^2+1}-\frac{4y^3}{16y^2+1}\geq 0.
\end{equation*}
From both cases, it can be observed that the self-mapping $w_1$ is an $f$-Proinov-type $\mathcal{Z}$-contraction on $[0,1]$ with respect to the simulation function $\xi_1$.\\
 Next, we consider the self-mapping $w_2$ and the simulation function $\xi_2$.\\
 \textbf{Case 1:} If $x>y$, then $q(x,y)= 8x$ and $q\left(w_2(x), w_2(y)\right)=\frac{32x^2}{4x^2+1}$. Thus,
 \begin{equation*}
     \xi_2\left(\zeta\left(q\left(w_2(x),w_2(y)\right)\right),\eta\left(q(x,y)\right)\right)=\frac{64x^2}{4x^2+1}-\frac{32x^2}{4x^2+1}\geq 0.
     \end{equation*}
     \textbf{Case 2:} If $x<y$, then $q(x,y)=4y$ and $q\left(w_2(x),w_2(y)\right)=\frac{16y^2}{4y^2+1}$. Then,
     \begin{equation*}
         \xi_2\left(\zeta\left(q\left(w_2(x),w_2(y)\right)\right),\eta\left(q(x,y\right)\right)=\frac{16y^2}{y^2+1}-\frac{16y^2}{4y^2+1}\geq\frac{16y^2}{y^2+1}-\frac{16y^2}{y^2+1}=0.
     \end{equation*}
     Hence, $w_2$ is an $f$-Proinov-type $\mathcal{Z}$-contraction on $[0,1]$ with respect to the simulation function $\xi_2$.\\ Then by Theorem \ref{thm:4.5}, the collection $\left\{[0,1], w_1,w_2\right\}$ forms an IFS. Furthermore, the fractal operator $W$ generated by this IFS is a Proinov-type $\mathcal{Z}$-contraction on the complete metric space $\mathcal{H}_f\left([0,1]\right)$ with\textcolor{white}{a}respect to\textcolor{white}{a}the simulation\textcolor{white}{a}function $\xi(s,t)=\max\left\{\xi_1(s,t),\xi_2(s,t)\right\}= \max\left\{\frac{t}{t+1}-s, \frac{16t}{t+16}-s\right\}$. The Theorem \ref{thm:4.5} also guarantees the existence of a unique attractor of this IFS. Here, we can observe that $w_1\left([0,\frac{1}{2}]\right)=[0,\frac{1}{172}]$ and $w_2\left([0,\frac{1}{2}]\right)=[0,\frac{1}{2}]$. Thus, $W\left([0,\frac{1}{2}]\right)=w_1\left([0,\frac{1}{2}]\right)\bigcup w_2\left([0,\frac{1}{2}]\right)=[0,\frac{1}{2}]$. In addition, we can observe that $[0,\frac{1}{2}]$ is the unique attractor of this IFS.
\end{eg}

\end{document}